\documentclass[11pt,reqno,a4paper,oneside,11pt]{amsart}%
\usepackage{amsfonts}
\usepackage{mathrsfs}
\usepackage{amssymb}
\usepackage{amsmath}
\usepackage{amsthm}
\usepackage{graphicx}
\usepackage{color}
\usepackage{extarrows}

\setcounter{MaxMatrixCols}{30}
\providecommand{\U}[1]{\protect\rule{.1in}{.1in}}
\oddsidemargin 0pt \evensidemargin 0pt \textwidth 160mm
\textheight 230mm
\newtheorem{theorem}{Theorem}[section]
\newtheorem{corollary}[theorem]{Corollary}
\newtheorem{lemma}[theorem]{Lemma}

\theoremstyle{definition}
\theoremstyle{remark}
\numberwithin{equation}{section}

\ifx\pdfoutput\relax\let\pdfoutput=\undefined\fi
\newcount\msipdfoutput
\ifx\pdfoutput\undefined\else
\ifcase\pdfoutput\else
\msipdfoutput=1
\ifx\paperwidth\undefined\else
\ifdim\paperheight=0pt\relax\else\pdfpageheight\paperheight\fi
\ifdim\paperwidth=0pt\relax\else\pdfpagewidth\paperwidth\fi
\fi\fi\fi
\begin{document}
\pagestyle{myheadings}

\begin{center}
{\huge \textbf{  The general solutions to some systems of Sylvester-type quaternion matrix equations with an application}}\footnote{This research was supported by National Natural
Science Foundation of China grant (11971294 and 12171369)
\par
{}* Corresponding author. \par  Email address: wqw@t.shu.edu.cn, wqw369@yahoo.com (Q.W. Wang).}

\bigskip

{ \textbf{Qing-Wen Wang$^{a,b,*}$, Long-Sheng Liu$^{a,c}$}}\\
{\small
\vspace{0.25cm}
 a Department of Mathematics, Shanghai University, Shanghai 200444, P. R. China\\
 b Collaborative Innovation Center for the Marine Artificial Intelligence, Shanghai 200444, P. R. China\\
c School of Mathematics and Computational Science, Anqing Normal University, Anqing 246011, People's Republic of China \quad\quad}
\end{center}
\vspace{1cm}
\begin{quotation}
\noindent\textbf{Abstract:}
Sylvester-type matrix equations have applications in areas including control theory, neural networks, and image processing. In this paper, we establish the necessary and sufficient conditions for the system of Sylvester-type quaternion matrix equations to be consistent and derive an expression of its general solution (when it is solvable). As an application, we investigate the necessary and sufficient conditions for quaternion matrix equations to be consistent and derive a formula for its general solution involving $\eta$-Hermicity. As a special case, we also present the necessary and sufficient conditions for the system of two-sided Sylvester-type quaternion matrix equations to have a solution and derive a formula for its general solution (when it is solvable). Finally, we present an algorithm and an example to illustrate the main results of this paper.\\

\vspace{3mm}

\noindent\textbf{Keywords:} matrix equation;  Matrix equation; Quaternion; $\eta$-Hermitian; Moore-Penrose; Rank\newline%
\noindent\textbf{2010 AMS Subject Classifications:\ }{\small 15A03; 15A09; 15A24; 15B33; 15B57 }
\end{quotation}
\section{\textbf{Introduction}}
Throughout this paper, The field of real numbers is denoted by $\mathbb{R}$. $\mathbb{H}^{m\times n}$ represents the space of all $m\times n$ matrices over $\mathbb{H}$,
	\begin{align*}
	&\mathbb{H}=\{v_{0}+v_{1}\mathbf{i}+v_{2}\mathbf{j}+v_{3}\mathbf{k} | \mathbf{i}^{2}=\mathbf{j}^{2}=\mathbf{k}^{2}=\mathbf{ijk}=-1,v_{0}, v_{1}, v_{2}, v_{3}\in \mathbb{R}\}.
	\end{align*}
	Here, the rank of $A$ is denoted by $r(A)$, while $I$ and $0$ represent an identity matrix and a zero matrix of appropriate sizes, respectively. Term $A^{\ast}$ represents the conjugate transpose of $A$. The Moore-Penrose (M-P) inverse of $A\in \mathbb{H}^{l\times k}$,  $A^{\dagger}$, is defined as the solution of $AYA=A, \  YAY=Y,\  (AY)^{\ast}=AY$ and $(YA)^{\ast}=YA.$ Moreover, $L_{A}=I-A^{\dagger}A$ and $R_{A}=I-AA^{\dagger}$ represent two projectors along $A$. Recall that a quaternion matrix $A$ is called $\eta$-Hermitian if $A=A^{\eta^{\ast}}$, where $A^{\eta^{\ast}}=-\eta A^{\ast}\eta$ and ${\eta}\in \{\mathbf{i}, \mathbf{j}, \mathbf{k}\}$ \cite{F.Z 2011}. It is well-known that $(L_{A})^{\eta^{\ast}}=R_{A^{\eta^{\ast}}}$, $(R_{A})^{\eta^{\ast}}=L_{A^{\eta^{\ast}}}$.
	
	Since Roth \cite{W.E. 1952} first considered the following one-sided Sylvester-type matrix equation in 1952:
	\begin{equation}\label{eq2}
	\begin{aligned}
	AX+YB=C,
	\end{aligned}
	\end{equation}
	which has applications in control theory and singular system control \cite{E.B. 2005}, neural networks \cite{Y.N. 2002},  there have been extensive studies of equation \eqref{eq2}. For example, Baksalary and Kala \cite{J.K. 1980} investigated the necessary and sufficient conditions for the solvability of equation \eqref{eq2} by using the generalized inverses of the matrices involved. Further, Flanders and Wimmer \cite{F.H. 1977} provided an invariant proof of Roth's theorem, while Baksalary and Kala \cite{J.K. 1980} established the necessary and sufficient conditions for the following Sylvester-type matrix equation to be consistent:
	\begin{equation}\label{eq3}
	\begin{aligned}
	C_{3}X_{3}D_{3}+C_{4}X_{4}D_{4}=E_{1}.
	\end{aligned}
	\end{equation}
	\"{O}zg\"{u}ler  \cite{A.B. 1991} studied the necessary and sufficient conditions for the solvability of equation \eqref{eq3} over a principal ideal domain.  Furthermore, Wang \cite{Q.W. 1991} provided some necessary and sufficient conditions for equation \eqref{eq3} to enable a solution over an arbitrary regular ring with identity and obtained an expression for its general solution.
	
	In 1843, Irish mathematician sir William Rowan Hamilton introduced quaternions. It is well known that the quaternion algebra, $\mathbb{H}$, is an associative noncommutative division algebra over  $\mathbb{R}$, which has applications in computer science, orbital mechanics, signal and color image processing, and control theory (\cite{Brahma 2015}, \cite{Bihan 2004}, \cite{Agudelo 2016},  \cite{S.C. 1999}, \cite{L.Q. 2021} \cite{C.C 2011}).
	
	Based on the wide applications of quaternions, interest in Sylvester-type matrix equations has expanded to $\mathbb{H}$, finding many applications, including signal processing, color-image processing and so on (see, e.g., \cite{Z.G. 2019}, \cite{R.L. 2014},  \cite{S.W. 2021}, \cite{S.F. 2013}). Many researchers have studied the Sylvester-type matrix equations over $\mathbb{H}$ ( \cite{H.W. 2013}-\cite{Liu 2022}, \cite{xinliu03}, \cite{M.S 2022}, \cite{R.N 2022}-\cite{Y.F. 2022} ). For example, He et al. \cite{He 2018} investigated some necessary and sufficient conditions for Sylvester-type quaternion matrix equations and derived an expression for their general solution. In addition,  Solvability conditions and the general solution for a system of constrained two-sided Sylvester-type quaternion matrix equations were established by Wang \cite{T30}.  Moreover, Wang et al. \cite{Wang 2009} presented some necessary and sufficient conditions for the Sylvester-type matrix equations
	\begin{equation}\label{eq4}
	\begin{aligned}
	&A_{1}X=C_{1},\ XB_{1}=C_{2},\\
	&A_{2}Y=C_{3},\ YB_{2}=C_{4},\\
	&A_{3}XB_{3}+A_{4}YB_{4}=C_{c}
	\end{aligned}
	\end{equation}
	to provide a common solution and an expression for a general solution to equations \eqref{eq4} over $\mathbb{H}$. In 2022, Liu, et al. \cite{Liu 2021} derived some necessary and sufficient conditions to solve the following  Sylvester-type quaternion matrix equation by using ranks of coefficient matrices and M-P inverses, respectively:
	\begin{equation}\label{eq5}
	\begin{aligned}
	A_{1}X_{1}+X_{2}B_{1}+A_{2}Y_{1}B_{2}+A_{3}Y_{2}B_{3}+A_{4}Y_{3}B_{4}=B.
	\end{aligned}
	\end{equation}
	They also given an expression for a general solution (when it is solvable). Moreover, He and Wang \cite{Z.H. 2021} studied the solvability conditions for the following Sylvester quaternion matrix equations to be consistent using matrix decomposition:
	\begin{equation}\label{new}
	\begin{aligned}
	&A_1Y_1=A_2,\ Y_1B_1=B_2,\\
	A_{11}Y_{1}B_{11}&+A_{22}Y_{2}B_{22}+A_{33}Y_{3}B_{33}=B.
	\end{aligned}
	\end{equation}
	However, to our knowledge, there is no additional information to extend equations \eqref{new} and investigate the necessary and sufficient conditions for equations \eqref{new} to be consistent in terms of M-P inverses and derive an expression for its general solution using these inverses. Motivated by the worked mentioned above and keeping the interest and wide application of matrix equations, in this paper, we extend equations \eqref{new}, i.e., the following Sylvester-type matrix equations:
	\begin{equation}\label{eq1}
	\begin{aligned}
	A_{1}&U=C_{1},\ VB_{1}=D_{1},\\
	A_{2}&X=C_{2},\ XB_{2}=D_{2},\\
	A_{3}&Y=C_{3},\ YB_{3}=D_{3},\\
	A_{4}&Z=C_{4},\ ZB_{4}=D_{4},\\
	E_1U+VF_1+&E_{2}XF_{2}+E_{3}YF_{3}+E_{4}ZF_{4}=C_c.
	\end{aligned}
	\end{equation}
	This is achieved using rank equalities and M-P inverses of some coefficients quaternion matrices in equations \eqref{eq1} and derive a formula for its general solution (when it is solvable), where $A_{i}$, $B_{i}$, $C_{i}$, $D_{i}$, $E_{i}$, $F_{i}\ (i=\overline{1,4})$ and $C_c$ are given matrices, while $X$, $Y$, $Z$ are unknown. It is obvious that the system of matrix equations \eqref{eq1} is an extension of the other equations \eqref{eq2}, \eqref{eq3}, \eqref{eq4}, \eqref{eq5} and \eqref{new}. As a special case of equations \eqref{eq1}, we present some necessary and sufficient conditions for the following  system of two-sided Sylvester-type matrix equations to provide a solution and derive an expression of its general solution (when it is solvable):
	\begin{equation}\label{eq7-1}
	\begin{aligned}
	A_{1}&X=C_{1},\ XB_{1}=D_{1},\\
	A_{2}&Y=C_{2},\ YB_{2}=D_{2},\\
	A_{3}&Z=C_{3},\ ZB_{3}=D_{3},\\
	E_{1}XF_{1}&+E_{2}YF_{2}+E_{3}ZF_{3}=C.
	\end{aligned}
	\end{equation}

  We known that $\eta$-Hermitian matrices have some applications, such as in linear modeling (e.g., \cite{H.Z 2014}, \cite{H.Z 2017}, \cite{C.C 2011}). Many researchers have studied matrix equations involving $\eta$-Hermicity. For instance, He and Wang \cite{H.W. 2013} established the necessary and sufficient conditions for a solution to the following matrix equation:
	\begin{equation}\label{eqn1}
	\begin{aligned}
	B_1XB_{1}^{\eta^{\ast}}+C_1YC_{1}^{\eta^{\ast}}=D_1,
	\end{aligned}
	\end{equation}
	where $X$ and $Y$ are $\eta$-Hermitian. Zhang and Wang \cite{Y.Z. 2013} presented the solvability conditions and the general solution of the following matrix equations:
	\begin{equation}\label{eqn2}
	\begin{aligned}
	A_1X=C_1,\ YB_1=D_1,\\
	A_2XA_{2}^{\eta^{\ast}}+A_3YA_{3}^{\eta^{\ast}}=D_3,
	\end{aligned}
	\end{equation}
	where $X$ and $Y$ are $\eta$-Hermitian.
	
    Furthermore, as an application of equations \eqref{eq1}, we investigate some necessary and sufficient conditions for the following matrix equations to be consistent and derive an expression for its general solution:
	\begin{equation}\label{eq7-2}
	\begin{aligned}
	&A_1U=C_1,\\
	A_{2}X&=C_{2},\ X=X^{\eta^{\ast}},\\
	A_{3}Y&=C_{3},\ Y=Y^{\eta^{\ast}},\\
	A_{4}Z&=C_{4},\  Z=Y^{\eta^{\ast}},\\
	E_1U+(E_1U)^{\eta^{\ast}}+E_{2}&XE_{2}^{\eta^{\ast}}+E_{3}YE_{3}^{\eta^{\ast}}+E_{4}ZE_{4}^{\eta^{\ast}}=C_c.
	\end{aligned}
	\end{equation}
As a special case of equations \eqref{eq7-2}, we establish the necessary and sufficient conditions for the following system of matrix equations to provide a solution and a formula for its general solution, which is an $\eta$-Hermitian solution:
	{\begin{equation}\label{eq7-3}
		\begin{aligned}
		&A_{1}X=C_{1},\ X=X^{\eta^{\ast}},\\
		&A_{2}Y=C_{2},\ Y=Y^{\eta^{\ast}},\\
		&A_{3}Z=C_{3},\  Z=Y^{\eta^{\ast}},\\
		E_{1}XE_{1}^{\eta^{\ast}}&+E_{2}YE_{2}^{\eta^{\ast}}+E_{3}ZE_{3}^{\eta^{\ast}}=C.
		\end{aligned}
		\end{equation}
Clearly, equation \eqref{eqn1} and equations \eqref{eqn2} are special case of equations \eqref{eq7-2}.
		
		The remainder of this article is built up as follows. In Section 2, we present the preliminaries. In Section 3, we establish some necessary and sufficient conditions for the system of matrix equations \eqref{eq1} to have a solution by using the M-P inverses and rank equalities of the quaternion matrices involved. In addition, we provide a formula for its general solution (when it is solvable). As a special case of equations \eqref{eq1}, we also present the solvability conditions and a formula for the general solution of equations \eqref{eq7-1} (when it is solvable).  In Section 4, as an application of equations \eqref{eq1}, we investigate some solvability conditions and the general  solution to equations \eqref{eq7-2}, where  $X,Y,Z$ are $\eta$-Hermitian. Moreover, as a special case of equations \eqref{eq7-2}, we also investigate some solvability conditions and the general  solution to equations \eqref{eq7-3}, which is an $\eta$-Hermitian solution. In section 5, we present an algorithm and an example to illustrate the main results. Finally, we provide a brief conclusions to close the paper in Section 6.
\section{Preliminaries \label{sec2}}

The following lemma is due to Marsaglia and Styan \cite{M.G 1974}, which can be generalized to $\mathbb{H}$.
		\begin{lemma}\label{lem2} \cite{M.G 1974}\ Let $A \in \mathbb{H}^{m\times n},$ $B \in \mathbb{H}^{m\times k}, $ $C \in \mathbb{H}^{l\times n}$, $D\in \mathbb{H}^{j\times k}$ and $E \in\mathbb{H}^{l\times i}$ be given. Then we have the following rank equality:
			$$\ r\begin{pmatrix}
			A&BL_{D}\\
			R_{E}C&0
			\end{pmatrix}=r\begin{pmatrix}
			A&B&0\\
			C&0&E\\
			0&D&0
			\end{pmatrix}-r(D)-r(E).
			$$
		\end{lemma}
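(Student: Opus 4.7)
My plan is to reduce the $3\times 3$ block matrix on the right-hand side to a form in which $D$ and $E$ are isolated in their own block rows/columns, using only invertible block row and column operations, and then apply the standard Marsaglia--Styan rank-additivity formulas
\[ r(M_1,\,M_2) = r(M_1) + r(R_{M_1}M_2), \qquad r\begin{pmatrix} M_1 \\ M_2 \end{pmatrix} = r(M_2) + r(M_1 L_{M_2}) \]
to split off the $r(D)$ and $r(E)$ summands.

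I would carry out two elementary block operations. First, subtract $BD^\dagger$ times block row $3$ from block row $1$: because the third block row is $(0,D,0)$, this replaces the $(1,2)$ block $B$ by $B-BD^\dagger D = BL_D$ and leaves every other block alone. Second, add $-E^\dagger C$ times block column $3$ to block column $1$: because the third block column is $\begin{pmatrix}0\\E\\0\end{pmatrix}$, this replaces the $(2,1)$ block $C$ by $C - EE^\dagger C = R_E C$. Both operations correspond to multiplication by invertible block-triangular matrices with identity diagonal, so they preserve rank, and the matrix becomes
\[\begin{pmatrix} A & BL_D & 0 \\ R_E C & 0 & E \\ 0 & D & 0 \end{pmatrix}.\]

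Next I would apply the rank-additivity formulas. For the column version, the third block column $\begin{pmatrix}0\\E\\0\end{pmatrix}$ has Moore-Penrose inverse $(0,E^\dagger,0)$, so its associated projector is $\operatorname{diag}(I,R_E,I)$, which acts trivially on the first two block columns because $R_E(R_E C)=R_E C$. This contributes exactly $r(E)$ and reduces the problem to computing $r\begin{pmatrix} A & BL_D \\ R_E C & 0 \\ 0 & D \end{pmatrix}$. For the row version, the bottom block row $(0,D)$ has associated projector $\operatorname{diag}(I,L_D)$, which acts trivially on the top block row because $(BL_D)L_D = BL_D$. This contributes $r(D)$ and leaves precisely $r\begin{pmatrix} A & BL_D \\ R_E C & 0 \end{pmatrix}$. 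Adding up and rearranging gives the claimed equality.

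The only real subtlety is that Marsaglia and Styan originally stated the identity over $\mathbb{R}$ or $\mathbb{C}$, whereas we work over the noncommutative division algebra $\mathbb{H}$. Verifying the transfer amounts to checking that the Moore-Penrose inverse is well-defined over $\mathbb{H}$, that the projector identities $DL_D = 0$, $L_D D^\dagger = 0$, $R_E E = 0$, $E^\dagger R_E = 0$ and the idempotencies $L_D^2 = L_D$, $R_E^2 = R_E$ all hold, and that block row/column operations by matrices invertible over $\mathbb{H}$ preserve rank---all standard features of quaternion matrix theory---so no essential obstacle arises, and the proof is the same block-manipulation argument as in the classical case.
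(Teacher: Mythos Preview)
Your proof is correct. Note, however, that the paper does not actually supply its own proof of this lemma: it is stated as a preliminary result cited from Marsaglia and Styan, with only the remark that it ``can be generalized to $\mathbb{H}$.'' Your argument---block row/column elimination followed by two applications of the rank-additivity identities, together with the idempotency of $L_D$ and $R_E$---is precisely the standard Marsaglia--Styan computation, and your final paragraph correctly identifies that the passage to $\mathbb{H}$ requires only the existence of Moore--Penrose inverses and rank invariance under invertible transformations, both of which hold over any $*$-division ring. So there is nothing to compare: you have written out the proof that the paper omits.
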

		\begin{lemma}\label{lemma2.2}\cite{J.N} Let $A_1$ and $A_2$ be given matrices $\mathbb{H}$. Then $A_1X=A_2$ is solvable if and only if $A_2=A_1A_{1}^{\dagger}A_2$. In this case, the general solution to this equation can be expressed as
			$$X=A_{1}^{\dagger}A_2+L_{A_{1}}U_{1},$$
			where $U_1$ is an any matrix with conformable size over $\mathbb{H}$.	
		\end{lemma}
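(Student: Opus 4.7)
The plan is to split the argument into the standard two halves: the solvability criterion (an ``if and only if'') and the parametrization of the solution set, both of which reduce to direct verifications using only the defining identities of the Moore--Penrose inverse, namely $A_1 A_1^{\dagger} A_1 = A_1$ and the definition $L_{A_1} = I - A_1^{\dagger} A_1$. No rank equality or auxiliary lemma is needed; the argument is purely algebraic and works over $\mathbb{H}$ because the four M--P conditions are coordinate-free with respect to the base ring's noncommutativity.

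For necessity, I would assume a solution $X$ exists and left-multiply $A_1 X = A_2$ by $A_1 A_1^{\dagger}$, obtaining $A_1 A_1^{\dagger} A_1 X = A_1 A_1^{\dagger} A_2$; applying $A_1 A_1^{\dagger} A_1 = A_1$ collapses the left-hand side to $A_1 X = A_2$, yielding the identity $A_2 = A_1 A_1^{\dagger} A_2$. Conversely, assuming that identity, the candidate particular solution $X_0 = A_1^{\dagger} A_2$ satisfies $A_1 X_0 = A_1 A_1^{\dagger} A_2 = A_2$, so the equation is consistent.

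Next I would verify that for every matrix $U_1$ of conformable size, $X = A_1^{\dagger} A_2 + L_{A_1} U_1$ is a solution: indeed $A_1 L_{A_1} U_1 = A_1 U_1 - A_1 A_1^{\dagger} A_1 U_1 = 0$, so $A_1 X = A_1 A_1^{\dagger} A_2 = A_2$. Finally, to show every solution has this form, I would take an arbitrary solution $X$ and exhibit it as $A_1^{\dagger} A_2 + L_{A_1} X$; this holds because $A_1^{\dagger} A_2 + L_{A_1} X = A_1^{\dagger} A_1 X + X - A_1^{\dagger} A_1 X = X$, where I substituted $A_2 = A_1 X$. Hence every solution is captured by choosing the free parameter $U_1 = X$.

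The only subtle point, and the ``trick'' worth flagging, is this last step: rather than solving for a free parameter in some constructive way, one simply plugs the given solution $X$ into the slot of $U_1$ and observes that the telescoping $A_1^{\dagger} A_1 X$ terms cancel. Everything else is mechanical verification, so I do not anticipate any genuine obstacle.
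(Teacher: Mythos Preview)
Your argument is correct and is precisely the standard textbook verification of this classical fact. Note that the paper does not actually prove this lemma; it is quoted without proof from the cited reference \cite{J.N}, so there is no in-paper proof to compare against.
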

		\begin{lemma}\label{lemma2.3}\cite{J.N} Let $A_1$ and $A_2$ be given matrices with adequate shapes over $\mathbb{H}$. Then $XA_1=A_2$ is solvable if and only if $A_2=A_2A_{1}^{\dagger}A_1$. In this case, the general solution to this equation can be expressed as
			$$X=A_2A_{1}^{\dagger}+U_{1}R_{A_{1}},$$
			where $U_1$ is an any matrix with conformable size over $\mathbb{H}$.		
		\end{lemma}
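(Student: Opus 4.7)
The plan is to mirror the standard proof of Lemma~\ref{lemma2.2}, transposing everything to the right-multiplication side. The statement splits into a consistency criterion and a parametrisation of the solution set, which I would handle in that order.

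For necessity of the criterion, assume $XA_1 = A_2$ for some $X$. Right-multiplying both sides by $A_1^{\dagger}A_1$ and invoking the defining identity $A_1A_1^{\dagger}A_1 = A_1$ gives $A_2 A_1^{\dagger}A_1 = XA_1A_1^{\dagger}A_1 = XA_1 = A_2$, as claimed. For sufficiency, if $A_2 = A_2 A_1^{\dagger}A_1$, then the candidate $X_0 := A_2 A_1^{\dagger}$ satisfies $X_0 A_1 = A_2 A_1^{\dagger} A_1 = A_2$ and therefore exhibits a particular solution.

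To describe the full solution set, I would first verify that every matrix of the form $X = A_2 A_1^{\dagger} + U_1 R_{A_1}$ solves the equation. Since $R_{A_1} A_1 = (I - A_1A_1^{\dagger})A_1 = A_1 - A_1A_1^{\dagger}A_1 = 0$, the second term vanishes after right-multiplication by $A_1$, leaving $X A_1 = A_2 A_1^{\dagger} A_1 = A_2$. Conversely, given any solution $X$, set $W := X - A_2 A_1^{\dagger}$; then $WA_1 = 0$, so $WA_1A_1^{\dagger} = 0$, and hence $W = W(I - A_1A_1^{\dagger}) = W R_{A_1}$. Choosing $U_1 := W$ then produces the desired representation, closing the parametrisation.

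I do not expect any genuine obstacle: the argument uses only the four defining identities of the Moore--Penrose inverse, each of which holds verbatim over $\mathbb{H}$. The only point to watch is that $\mathbb{H}$ is noncommutative, so every inserted factor $A_1A_1^{\dagger}$ or $A_1^{\dagger}A_1$ must appear on the correct side; this is automatic in the steps above because each manipulation merely replaces an identity block by a projector in a valid position.
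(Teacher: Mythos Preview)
Your proof is correct and follows the standard argument; the paper itself does not prove this lemma but merely cites it from \cite{J.N}, so there is nothing further to compare.
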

		\begin{lemma}\cite{J.N}\label{lem2.2}
			Let $A_{1}\in \mathbb{H}^{m_{1}\times n_{1}},
			B_{1}\in\mathbb{H}^{r_{1}\times
				s_{1}}, C_{1}\in\mathbb{H}^{m_{1}\times r_{1}}$ and $C_{2}\in\mathbb{H}^{n_{1}\times s_{1}}$ be given matrices. Then the system
			\begin{equation}\label{eq2.1}
			\begin{aligned}
			A_{1}X_{1}=C_{1},\quad X_{1}B_{1}=C_{2}
			\end{aligned}
			\end{equation}
			is consistent if and only if
			$$
			R_{A_{1}}C_{1}=0,\quad C_{2}L_{B_{1}}=0,\quad A_{1}C_{2}=C_{1}B_{1}.
			$$
			Under these conditions, a general solution to equations \eqref{eq2.1} can be expressed as
			$$
			X_{1}=A_{1}^{\dagger}C_{1}+L_{A_{1}}C_{2}B_{1}^{\dagger}+L_{A_{1}}U_{1}R_{B_{1}},
			$$
			where $U_{1}$ is an arbitrary matrix of appropriate shape over $\mathbb{H}$.
		\end{lemma}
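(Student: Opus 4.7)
The plan is to handle necessity and sufficiency separately, relying on Lemmas~\ref{lemma2.2} and~\ref{lemma2.3} (the one-sided cases) to keep the work short.

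For the necessity direction, I would substitute a hypothetical solution $X_{1}$ into each of the two equations. Left-multiplying $A_{1}X_{1}=C_{1}$ by $I-A_{1}A_{1}^{\dagger}$ forces $R_{A_{1}}C_{1}=0$; right-multiplying $X_{1}B_{1}=C_{2}$ by $I-B_{1}^{\dagger}B_{1}$ forces $C_{2}L_{B_{1}}=0$; and the associativity $A_{1}(X_{1}B_{1})=(A_{1}X_{1})B_{1}$ yields the compatibility $A_{1}C_{2}=C_{1}B_{1}$. Each step is essentially one line.

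For the sufficiency direction, I would start from Lemma~\ref{lemma2.2} applied to the first equation, writing $X_{1}=A_{1}^{\dagger}C_{1}+L_{A_{1}}W$ for an arbitrary matrix $W$ of suitable size. Plugging this into $X_{1}B_{1}=C_{2}$ and using the compatibility condition to replace $A_{1}^{\dagger}C_{1}B_{1}$ by $A_{1}^{\dagger}A_{1}C_{2}=(I-L_{A_{1}})C_{2}$, the equation collapses to $L_{A_{1}}WB_{1}=L_{A_{1}}C_{2}$. Since $C_{2}L_{B_{1}}=0$ is equivalent to $C_{2}=C_{2}B_{1}^{\dagger}B_{1}$, this rearranges as
$$L_{A_{1}}(W-C_{2}B_{1}^{\dagger})B_{1}=0.$$
By Lemma~\ref{lemma2.3}, the homogeneous equation $YB_{1}=0$ has general solution $Y=VR_{B_{1}}$, so $L_{A_{1}}(W-C_{2}B_{1}^{\dagger})=VR_{B_{1}}$ for some free $V$. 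Applying $L_{A_{1}}$ on the left once more and using the idempotence $L_{A_{1}}^{2}=L_{A_{1}}$ absorbs the projector onto $V$, yielding
$$L_{A_{1}}W=L_{A_{1}}C_{2}B_{1}^{\dagger}+L_{A_{1}}U_{1}R_{B_{1}}$$
with $U_{1}:=V$ arbitrary, which gives the announced formula after substituting back.

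The only subtle point is the parametrization of the free part: one must show that every $W$ satisfying the reduced constraint produces, once $L_{A_{1}}$ is applied, a term of the tight form $L_{A_{1}}U_{1}R_{B_{1}}$ rather than something larger, and the idempotence trick is what makes this work cleanly. The converse verification, that any matrix of the claimed form satisfies both equations of \eqref{eq2.1} under the three consistency conditions, is then a routine substitution and need not be carried out in detail.
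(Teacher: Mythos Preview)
The paper does not supply its own proof of this lemma; it is quoted from \cite{J.N} as a preliminary fact, so there is no in-paper argument to compare against. Your proposal is therefore being judged on its own.

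Your argument is correct. The necessity direction is immediate as you describe. For sufficiency, the reduction to $L_{A_{1}}(W-C_{2}B_{1}^{\dagger})B_{1}=0$ is clean, and the key observation is that any matrix $Y$ with $YB_{1}=0$ satisfies $Y=YR_{B_{1}}$; applying this with $Y=L_{A_{1}}(W-C_{2}B_{1}^{\dagger})$ and then using $L_{A_{1}}^{2}=L_{A_{1}}$ gives $L_{A_{1}}W=L_{A_{1}}C_{2}B_{1}^{\dagger}+L_{A_{1}}U_{1}R_{B_{1}}$ with $U_{1}=W-C_{2}B_{1}^{\dagger}$. One small wording issue: when you write ``$L_{A_{1}}(W-C_{2}B_{1}^{\dagger})=VR_{B_{1}}$ for some free $V$'', the matrix $V$ is not free at that point---it is determined (up to the kernel of right-multiplication by $R_{B_{1}}$) by the given $W$. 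The freedom of $U_{1}$ is really established by the converse verification you defer to the end: for arbitrary $U_{1}$, the displayed formula does satisfy both equations under the three consistency hypotheses. With that clarification the proof is complete.
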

		\begin{lemma}\label{lem2.3}\cite{Liu 2021}
			Let $A_{i},$ $B_{i}$ and $B$\ $(i=\overline{1,4})$ be given quaternion matrices with appropriate sizes. Put
			\begin{align*}
			&R_{A_{1}}A_{i+1}=A_{ii},\ B_{i+1}L_{B_{1}}=B_{ii} (i=\overline{1,3}),\ T_1=R_{A_{1}}BL_{B_{1}},\ B_{22}L_{B_{11}}=N_{1},\ R_{A_{11}}A_{22}=M_{1},\\
			&  S_{1}=A_{22}L_{M_{1}}, \ C=R_{M_{1}}R_{A_{11}}, C_{1}=CA_{33},\ C_{2}=R_{A_{11}}A_{33},\ C_{3}=R_{A_{22}}A_{33},\ C_{4}=A_{33},\ D=L_{B_{11}}L_{N_{1}},\\
			& D_{1}=B_{33},\ D_{2}=B_{33}L_{B_{22}}, \ D_{3}=B_{33}L_{B_{11}},\ D_{4}=B_{33}D,\ E_{1}=CT_{1},\ E_{2}=R_{A_{11}}T_{1}L_{B_{22}},\\
			&E_{3}=R_{A_{22}}T_{1}L_{B_{11}},\ E_{4}=T_{1}D,\ C_{11}=(L_{C_{2}},\ L_{C_{4}}),\ D_{11}=\left(
			\begin{array}{c}
			R_{D_{1}} \\
			R_{D_{3}} \\
			\end{array}
			\right),\ C_{22}=L_{C_{1}},\ D_{22}=R_{D_{2}},\\
			&C_{33}=L_{C_{3}},\ D_{33}=R_{D_{4}},\ F_{1}=C_{1}^{\dagger}E_{1}D_{1}^{\dagger}+L_{C_{1}}C_{2}^{\dagger}E_{2}D_{2}^{\dagger},\ E_{11}=R_{C_{11}}C_{22},\ E_{22}=R_{C_{11}}C_{33},\ E_{33}=D_{22}L_{D_{11}},\\
			&E_{44}=D_{33}L_{D_{11}},\ F_{2}=C_{3}^{\dagger}E_{3}D_{3}^{\dagger}+L_{C_{3}}C_{4}^{\dagger}E_{4}D_{4}^{\dagger},\ M=R_{E_{11}}E_{22},\ N=E_{44}L_{E_{33}},\ F=F_{2}-F_{1},\ E=R_{C_{11}}FL_{D_{11}},\\ & S=E_{22}L_{M},\ G_{1}=E_{2}-C_{2}C_{1}^{\dagger}E_{1}D_{1}^{\dagger}D_{2},\ F_{11}=C_{2}L_{C_{1}},\ F_{22}=C_{4}L_{C_{3}},\  G_{2}=E_{4}-C_{4}C_{3}^{\dagger}E_{3}D_{3}^{\dagger}D_{4}.
			\end{align*}
			Then following statements are equivalent:
			
			$\mathrm{(1)}$ Equation \eqref{eq5} is consistent.
			
			$\mathrm{(2)}$
			\begin{align*}
			R_{C_{i}}E_{i}=0,\ E_{i}L_{D_{i}}=0\ (i=\overline{1,4}),\ R_{E_{22}}EL_{E_{33}}=0.
			\end{align*}
			
			$\mathrm{(3)}$	
			\begin{align*}
			&r\left(
			\begin{array}{ccccc}
			B & A_{2} & A_{3} & A_{4} & A_{1} \\
			B_{1} & 0 & 0 & 0 & 0 \\
			\end{array}
			\right)=r(B_{1})+r(A_{2},\ A_{3},\ A_{4},\ A_{1}),\\
\end{align*}
\begin{align*}
			&r\left(
			\begin{array}{cccc}
			B & A_{2} & A_{4} & A_{1} \\
			B_{3} & 0 & 0 & 0 \\
			B_{1} & 0 & 0 & 0 \\
			\end{array}
			\right)=r(A_{2},\ A_{4},\ A_{1})+r\left(
			\begin{array}{c}
			B_{3} \\
			B_{1} \\
			\end{array}
			\right),\\
			&r\left(
			\begin{array}{cccc}
			B & A_{3} & A_{4} & A_{1} \\
			B_{2} & 0 & 0 & 0 \\
			B_{1} & 0 & 0 & 0 \\
			\end{array}
			\right)=r(A_{3},\ A_{4},\ A_{1})+r\left(
			\begin{array}{c}
			B_{2} \\
			B_{1} \\
			\end{array}
			\right),\\
			&r\left(
			\begin{array}{ccc}
			B & A_{4} & A_{1} \\
			B_{2} & 0 & 0 \\
			B_{3} & 0 & 0 \\
			B_{1} & 0 & 0 \\
			\end{array}
			\right)=r\left(
			\begin{array}{c}
			B_{2} \\
			B_{3} \\
			B_{1} \\
			\end{array}
			\right)+r(A_{4},\ A_{1}),\\
			&r\left(
			\begin{array}{cccc}
			B & A_{2} & A_{3} & A_{1} \\
			B_{4} & 0 & 0 & 0 \\
			B_{1} & 0 & 0 & 0 \\
			\end{array}
			\right)=r(A_{2},\ A_{3},\ A_{1})+r\left(
			\begin{array}{c}
			B_{4} \\
			B_{1} \\
			\end{array}
			\right),\\
			&r\left(
			\begin{array}{ccc}
			B & A_{2} & A_{1} \\
			B_{3} & 0 & 0 \\
			B_{4} & 0 & 0 \\
			B_{1} & 0 & 0 \\
			\end{array}
			\right)=r\left(
			\begin{array}{c}
			B_{3} \\
			B_{4} \\
			B_{1} \\
			\end{array}
			\right)+r(A_{2},\ A_{1}),\\
			&r\left(
			\begin{array}{ccc}
			B & A_{3} & A_{1} \\
			B_{2} & 0 & 0 \\
			B_{4} & 0 & 0 \\
			B_{1} & 0 & 0 \\
			\end{array}
			\right)=r\left(
			\begin{array}{c}
			B_{2} \\
			B_{4} \\
			B_{1} \\
			\end{array}
			\right)+r(A_{3},\ A_{1}),\\
			&r\left(
			\begin{array}{cc}
			B & A_{1} \\
			B_{2} & 0 \\
			B_{3} & 0 \\
			B_{4} & 0 \\
			B_{1} & 0 \\
			\end{array}
			\right)=r\left(
			\begin{array}{c}
			B_{2} \\
			B_{3} \\
			B_{4} \\
			B_{1} \\
			\end{array}
			\right)+r(A_{1}),\\
			&r\left(
			\begin{array}{ccccccc}
			B & A_{2} & A_{1} & 0 & 0 & 0 & A_{4} \\
			B_{3} & 0 & 0 & 0 & 0 & 0 & 0 \\
			B_{1} & 0 & 0 & 0 & 0 & 0 & 0 \\
			0 & 0 & 0 & -B & A_{3} & A_{1} & A_{4} \\
			0 & 0 & 0 & B_{2} & 0 & 0 & 0 \\
			0 & 0 & 0 & B_{1} & 0 & 0 & 0 \\
			B_{4} & 0 & 0 & B_{4} & 0 & 0 & 0 \\
			\end{array}
			\right)\\
			&=r\left(
			\begin{array}{cc}
			B_{3} & 0 \\
			B_{1} & 0 \\
			0 & B_{2} \\
			0 & B_{1} \\
			B_{4} & B_{4} \\
			\end{array}
			\right)+r\left(
			\begin{array}{ccccc}
			A_{2} & A_{1} & 0 & 0 & A_{4} \\
			0 & 0 & A_{3} & A_{1} & A_{4} \\
			\end{array}
			\right).
			\end{align*}		
			In this case, the solution of equation \eqref{eq5}  can be  expressed as
			\begin{align*}
			&X_{1}=A_{1}^{\dagger}(B-A_{2}Y_{1}B_{2}-A_{3}Y_{2}B_{3}-A_{4}Y_{3}B_{4})-A_{1}^{\dagger}U_{1}B_{1}+L_{A_{1}}U_{2},\\
			&X_{2}=R_{A_{1}}(B-A_{2}Y_{1}B_{2}-A_{3}Y_{2}B_{3}-A_{4}Y_{3}B_{4})B_{1}^{\dagger}+A_{1}A_{1}^{\dagger}U_{1}+U_{3}R_{B_{1}},\\		&Y_{1}=A_{11}^{\dagger}TB_{11}^{\dagger}-A_{11}^{\dagger}A_{22}M_{1}^{\dagger}TB_{11}^{\dagger}-A_{11}^{\dagger}S_{1}A_{22}^{\dagger}TN_{1}^{\dagger}B_{22}B_{11}^{\dagger}\\
			&-A_{11}^{\dagger}S_{1}U_{4}R_{N_{1}}B_{22}B_{11}^{\dagger}+L_{A_{11}}U_{5}+U_{6}R_{B_{11}},\\
			&Y_{2}=M_{1}^{\dagger}TB_{22}^{\dagger}+S_{1}^{\dagger}S_{1}A_{22}^{\dagger}TN_{1}^{\dagger}+L_{M_{1}}L_{S_{1}}U_{7}+U_{8}R_{B_{22}}+L_{M_{1}}U_{4}R_{N_{1}},\\
			&Y_{3}=F_{1}+L_{C_{2}}V_{1}+V_{2}R_{D_{1}}+L_{C_{1}}V_{3}R_{D_{2}},\\
			& or \\\ &Y_{3}=F_{2}-L_{C_{4}}W_{1}-W_{2}R_{D_{3}}-L_{C_{3}}W_{3}R_{D_{4}},
			\end{align*}
			where $T=T_{1}-A_{33}Y_{3}B_{33}$, $U_{i}(i=\overline{1,8})$ are arbitrary matrices with appropriate sizes over $\mathbb{H}$,
			\begin{align*}
			&V_{1}=(I_{m},\ 0)\left[C_{11}^{\dagger}(F-C_{22}V_{3}D_{22}-C_{33}W_{3}D_{33})\right]-(I_{m},\ 0)\left[C_{11}^{\dagger}U_{11}D_{11}-L_{C_{11}}U_{12}\right],\quad \quad\quad \quad\quad\quad \quad \quad \quad \\
			&W_{1}=(0,\ I_{m})\left[C_{11}^{\dagger}(F-C_{22}V_{3}D_{22}-C_{33}W_{3}D_{33})\right]-(0,\ I_{m})\left[C_{11}^{\dagger}U_{11}D_{11}-L_{C_{11}}U_{12}\right],\\
			&W_{2}=\left[R_{C_{11}}(F-C_{22}V_{3}D_{22}-C_{33}W_{3}D_{33})D_{11}^{\dagger}\right]\left(
			\begin{array}{c}
			0 \\
			I_{n} \\
			\end{array}
			\right)+\left[C_{11}C_{11}^{\dagger}U_{11}+U_{21}R_{D_{11}}\right]\left(
			\begin{array}{c}
			0 \\
			I_{n} \\
			\end{array}
			\right),\\
			&V_{2}=\left[R_{C_{11}}(F-C_{22}V_{3}D_{22}-C_{33}W_{3}D_{33})D_{11}^{\dagger}\right]
			\left(                                                       \begin{array}{c}
			I_{n} \\
			0 \\                                                       \end{array}                                                   \right)+\left[C_{11}C_{11}^{\dagger}U_{11}+U_{21}R_{D_{11}}
			\right]
			\left(                                                       \begin{array}{c}
			I_{n} \\
			0 \\                                                       \end{array}                                                   \right),\\
			&V_{3}=E_{11}^{\dagger}FE_{33}^{\dagger}-E_{11}^{\dagger}E_{22}M^{\dagger}FE_{33}^{\dagger}-E_{11}^{\dagger}SE_{22}^{\dagger}FN^{\dagger}E_{44}E_{33}^{\dagger}-E_{11}^{\dagger}SU_{31}R_{N}E_{44}E_{33}^{\dagger}+L_{E_{11}}U_{32}+U_{33}R_{E_{33}},\\
			&W_{3}=M^{\dagger}FE_{44}^{\dagger}+S^{\dagger}SE_{22}^{\dagger}FN^{\dagger}+L_{M}L_{S}U_{41}+L_{M}U_{31}R_{N}-U_{42}R_{E_{44}},
			\end{align*}
			$U_{11}, U_{12}$, $U_{21}$, $U_{31}$, $U_{32}$, $U_{33}$, $U_{41}$ and $U_{42}$ are arbitrary matrices of appropriate sizes over $\mathbb{H}$. $m$ is the column number of $A_{4}$ and $n$ is the row number of $B_{4}$.
		\end{lemma}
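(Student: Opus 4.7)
My plan is to reduce the five-unknown equation \eqref{eq5} to successively simpler matrix equations, extract solvability conditions at each reduction, and then convert the resulting Moore--Penrose conditions into the rank equalities of statement (3).

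First I would isolate the pair $(X_1,X_2)$. Treating the other unknowns as parameters, the equation $A_1X_1+X_2B_1=B-A_2Y_1B_2-A_3Y_2B_3-A_4Y_3B_4$ is the classical one-sided Sylvester equation, which, by the standard criterion for $AX+YB=C$, is solvable in $(X_1,X_2)$ if and only if $R_{A_1}(B-A_2Y_1B_2-A_3Y_2B_3-A_4Y_3B_4)L_{B_1}=0$. Using the abbreviations $A_{ii}=R_{A_1}A_{i+1}$, $B_{ii}=B_{i+1}L_{B_1}$ and $T_1=R_{A_1}BL_{B_1}$, this becomes the three-term equation
\begin{equation*}
A_{11}Y_{1}B_{11}+A_{22}Y_{2}B_{22}+A_{33}Y_{3}B_{33}=T_{1},
\end{equation*}
and the general formulas for $X_1$ and $X_2$ follow immediately from Lemma~\ref{lemma2.2}/\ref{lemma2.3} applied to $A_1X_1+X_2B_1$.

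Next I would treat $Y_3$ as a further parameter and apply the two-term generalized Sylvester theory (Baksalary--Kala style) to $A_{11}Y_1B_{11}+A_{22}Y_2B_{22}=T_1-A_{33}Y_3B_{33}=:T$. That equation is consistent in $(Y_1,Y_2)$ iff the four conditions $R_{M_1}R_{A_{11}}T=0$, $TL_{B_{11}}L_{N_1}=0$, $R_{A_{11}}TL_{B_{22}}=0$, $R_{A_{22}}TL_{B_{11}}=0$ hold, where $M_1,N_1,S_1$ are as defined. Substituting $T=T_1-A_{33}Y_3B_{33}$ and using the abbreviations $C,D,C_i,D_i,E_i$, these four conditions translate precisely into the simultaneous system $C_iY_3D_i=E_i$ for $i=\overline{1,4}$. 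At this point the formulas for $Y_1$, $Y_2$ are produced by the explicit two-term Sylvester solution in terms of $M_1^\dagger,A_{11}^\dagger,S_1^\dagger,N_1^\dagger,B_{11}^\dagger,B_{22}^\dagger$.

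The heart of the argument is then solving the four-equation system $C_iY_3D_i=E_i$. I would first stack the pair $(C_1 Y_3 D_1 = E_1,\ C_2 Y_3 D_2 = E_2)$ and the pair $(C_3 Y_3 D_3 = E_3,\ C_4 Y_3 D_4 = E_4)$ into the particular solutions $F_1$ and $F_2$ respectively, using Lemma~\ref{lemma2.2} on each pair. Writing $Y_3=F_1+L_{C_1}L_{C_2}P_1+P_2R_{D_1}R_{D_2}+\cdots$ and equating to the $F_2$-form, one reduces compatibility to the single Sylvester-type equation
\begin{equation*}
C_{11}\begin{pmatrix}V_1\\W_1\end{pmatrix}+\bigl(V_2\;W_2\bigr)D_{11}+C_{22}V_3D_{22}+C_{33}W_3D_{33}=F,
\end{equation*}
whose solvability (via another application of the Roth-type lemma plus the two-term Sylvester lemma on $E_{11},E_{22},E_{33},E_{44},M,N,S$) is equivalent to $R_{E_{22}}EL_{E_{33}}=0$ and furnishes $V_3,W_3$; the remaining $V_i,W_i$ are then read off. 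Composing all the particular solutions and back-substituting yields the displayed general solution, establishing $(1)\Leftrightarrow(2)$.

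Finally, for $(2)\Leftrightarrow(3)$, I would translate each M--P identity $R_{C_i}E_i=0$, $E_iL_{D_i}=0$ and $R_{E_{22}}EL_{E_{33}}=0$ into a rank equality by repeatedly invoking Lemma~\ref{lem2} of Marsaglia--Styan. Each $C_i,D_i,E_i$ is a nested product of projectors $R_{A_1},L_{B_1},R_{A_{11}},L_{B_{11}},R_{M_1},L_{N_1},\ldots$; by Lemma~\ref{lem2} every layer of a projector $R_{(\cdot)}$ or $L_{(\cdot)}$ can be peeled off at the cost of adjoining a row/column block and subtracting a rank, leaving only the original data $A_i,B_i,B$. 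I expect this unwinding step to be the main technical obstacle: the outermost conditions (in particular the mixed one $R_{E_{22}}EL_{E_{33}}=0$, which produces the large $7\times 7$ block rank identity) involve six or seven nested projectors, so the bookkeeping requires an inductive schema---peel one projector, simplify, peel the next---to avoid a combinatorial explosion of block matrices. Once the unwinding is organized this way, each listed rank equality in (3) emerges from exactly one of the eight M--P conditions in (2), completing the equivalence.
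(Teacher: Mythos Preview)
The paper does not prove this lemma: it is quoted verbatim as a preliminary result from \cite{Liu 2021} and used as a black box in the proof of Theorem~\ref{the1}. There is therefore no in-paper proof to compare against. That said, your plan is correct and is exactly the strategy of the cited source (and, in parallel form, of the paper's own Theorem~\ref{the1}): strip off $(X_1,X_2)$ via the one-sided Sylvester criterion to obtain the three-term equation $A_{11}Y_1B_{11}+A_{22}Y_2B_{22}+A_{33}Y_3B_{33}=T_1$; freeze $Y_3$ and apply the Baksalary--Kala two-term theory to $(Y_1,Y_2)$, whose four compatibility conditions are precisely $C_iY_3D_i=E_i$; solve this common-unknown system by pairing $(1,2)$ and $(3,4)$ into the particular solutions $F_1,F_2$ and reducing their compatibility to one further Sylvester-type equation governed by $C_{11},D_{11},C_{22},D_{22},C_{33},D_{33}$, whose solvability is $R_{E_{22}}EL_{E_{33}}=0$; and finally peel the nested projectors with Lemma~\ref{lem2} to recover the nine rank identities. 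Your identification of the last step as the main bookkeeping burden is accurate; the nine conditions in (2) correspond one-to-one with the nine rank equalities in (3), with the large $7\times7$ block identity coming from $R_{E_{22}}EL_{E_{33}}=0$.
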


\section{Necessary and sufficient conditions for the existence of a solution to equations \eqref{eq1}}
		The goal of this section is to  establish the solvability conditions and a formula of its general solution to equations \eqref{eq1}.
		
		For the convenience, we define some notations the follows: Let $A_{i}$, $B_{i}$, $C_{i}$, $D_{i}$, $E_{i}$, $F_{i}\ (i=\overline{1,4})$, and $C_c$ be given matrices of appropriate sizes over  $\mathbb{H}$. Put

		\begin{align}
		&\begin{aligned}\label{3.1}
		&E_{i}L_{A_{i}}=A_{ii},\ R_{B_{i}}F_{i}=B_{ii}(i=\overline{1,4}),\ B_{j1}=B_{jj}L_{B_{11}} (j=\overline{2,4}),\ T_{1}=C_c-E_{1}A_{1}^{\dagger}C_{1}\\
		&-D_1B_{1}^{\dagger}F_1-\left(\sum_{i=2}^{4}E_{i}(A_{i}^{\dagger}C_{i}+L_{A_{i}}D_{i}B_{i}^{\dagger})F_{i}\right),\ A_{1j}=R_{A_{11}}A_{jj},\ B_{31}L_{B_{21}}=N_{1},\ R_{A_{12}}A_{13}=M_{1},\\ &S_{1}=A_{13}L_{M_{1}},\ R_{A_{11}}T_1L_{B_{11}}=T_{2},
		\end{aligned}
\end{align}
\begin{align}
		&\begin{aligned}\label{3.2}
		&G=R_{M_{1}}R_{A_{12}},\ G_{1}=GA_{14},\ G_{2}=R_{A_{12}}A_{14},\ G_{3}=R_{A_{13}}A_{14},\ G_{4}=A_{14},\ H=L_{B_{21}}L_{N_{1}},\\ &H_{1}=B_{41},\ H_{2}=B_{41}L_{B_{31}},\ H_{3}=B_{41}L_{B_{21}},\ H_{4}=B_{41}H,\ L_{1}=GT_{2},\ L_{2}=R_{A_{12}}T_{2}L_{B_{31}},\\
		&L_{3}=R_{A_{13}}T_{2}L_{B_{21}},\ L_{4}=T_{2}H,
		\end{aligned}\\
		&\begin{aligned}\label{3.3}
		&C_{11}=(L_{G_{2}},\ L_{G_{4}}),\ D_{11}=\left(
		\begin{array}{c}
		R_{H_{1}} \\
		R_{H_{3}} \\
		\end{array}
		\right),\ C_{22}=L_{G_{1}},\ D_{22}=R_{H_{2}},\ C_{33}=L_{G_{3}},\ D_{33}=R_{H_{4}},\\ &E_{11}=R_{C_{11}}C_{22}\ E_{22}=R_{C_{11}}C_{33},\ E_{33}=D_{22}L_{D_{11}},\ E_{44}=D_{33}L_{D_{11}},\ M=R_{E_{11}}E_{22},\\ & N=E_{44}L_{E_{33}},\ F=F_{44}-F_{33},\ E=R_{C_{11}}FL_{D_{11}},\ S=E_{22}L_{M},\ F_{11}=G_{2}L_{G_{1}},
		\end{aligned}\\
		&\begin{aligned}\label{3.4}
		&\ G_{11}=L_{2}-G_{2}G_{1}^{\dagger}L_{1}H_{1}^{\dagger}H_{2},\ F_{22}=G_{4}L_{G_{3}},\  G_{22}=L_{4}-G_{4}G_{3}^{\dagger}L_{3}H_{3}^{\dagger}H_{4},\\
		&F_{33}=G_{1}^{\dagger}L_{1}H_{1}^{\dagger}+L_{G_{1}}G_{2}^{\dagger}L_{2}H_{2}^{\dagger},\ F_{44}=G_{3}^{\dagger}L_{3}H_{3}^{\dagger}+L_{G_{3}}G_{4}^{\dagger}L_{4}H_{4}^{\dagger}.
		\end{aligned}
		\end{align}

		\begin{theorem}\label{the1} Consider \eqref{eq1} with the notation in \eqref{3.1} to \eqref{3.4}. The following statements are equivalent:
			
			$\mathrm{(1)}$ System \eqref{eq1} has a solution.
			
			$\mathrm{(2)}$
			\begin{equation}\label{eqnew}
			\begin{aligned}
			& A_{i}D_{i}=C_{i}B_{i},\ (i=\overline{2,4})
			\end{aligned}
			\end{equation}
			and
			\begin{equation}\label{eqnewcondition}
			\begin{aligned}
			&R_{A_{j}}C_{j}=0,\ D_{j}L_{B_{j}}=0,\ R_{G_{j}}L_{j}=0,\ L_{j}L_{H_{j}}=0 (j=\overline{1,4}),\ R_{E_{22}}EL_{E_{33}}=0.
			\end{aligned}
			\end{equation}
			$\mathrm{(3)}$\ \eqref{eqnew} holds and
			\begin{align}
			&\begin{aligned}\label{3.6}
			&r(C_{i},\ A_{i})=r(A_{i}),\ r\left(
			\begin{array}{c}
			D_{i} \\
			B_{i} \\
			\end{array}
			\right)=r(B_{i})\ (i=\overline{1,4}),
			\end{aligned}\\
			&\begin{aligned}\label{3.7}
			&r\left(
			\begin{array}{cccccc}
			C_c & E_{1} & E_{2} & E_{3} & E_4 & D_1\\
			F_1 &0 &0 &0 &0 & B_1\\
			C_1& A_1& 0& 0&0 &0\\
			C_{2}F_{2} & 0 & A_{2} & 0 & 0& 0\\
			C_{3}F_{3} & 0 & 0 & A_3 & 0& 0\\
			C_{4}F_{4} & 0 & 0 & 0 & A_{4} & 0\\
			\end{array}
			\right)=r\left(
			\begin{array}{cccc}
			E_{1} & E_{2} & E_{3} & E_4\\
			A_{1} & 0 & 0 &0\\
			0 & A_{2} & 0 &0\\
			0 & 0 & A_{3} &0\\
			0 & 0 & 0 &A_{4}\\
			\end{array}
			\right)+r(F_1,\ B_1),
			\end{aligned}\\
			&\begin{aligned}\label{3.8}
			&r\left(
			\begin{array}{cccccc}
			C_c & E_{1} & E_{2} & E_4& E_{3}D_{3} & D_1 \\
			C_1 & A_1 & 0&0&0&0\\
			C_{2}F_{2} & 0 & A_{2} & 0 &0&0\\
			C_{4}F_{4} & 0 & 0 & A_{4} &0&0\\
			F_{3} & 0 & 0 & 0& B_{3}& 0 \\
			F_{1} & 0 & 0 & 0& 0& B_{1}\\
			\end{array}
			\right)=r\left(
			\begin{array}{ccc}
			E_{1} & E_{2} & E_4\\
			A_{1} & 0 &0\\
			0 & A_{2} &0\\
			0 & 0 & A_{4}
			\end{array}
			\right)+r\begin{pmatrix}
			F_3& B_3 &0\\
			F_1&0&B_1
			\end{pmatrix},
			\end{aligned}
\end{align}
\begin{align}
			&\begin{aligned}\label{3.9}
			&r\left(
			\begin{array}{cccccc}
			C_c & E_{1} & E_{3} & E_4& E_{2}D_{2} & D_1 \\
			C_1 & A_1 & 0&0&0&0\\
			C_{3}F_{3} & 0 & A_{3} & 0 &0&0\\
			C_{4}F_{4} & 0 & 0 & A_{4} &0&0\\
			F_{2} & 0 & 0 & 0& B_{2}& 0 \\
			F_{1} & 0 & 0 & 0& 0& B_{1}\\
			\end{array}
			\right)=r\left(
			\begin{array}{ccc}
			E_{1} & E_{3} & E_4\\
			A_{1} & 0 &0\\
			0 & A_{3} &0\\
			0 & 0 & A_{4}
			\end{array}
			\right)+r\begin{pmatrix}
			F_2& B_2 &0\\
			F_1&0&B_1
			\end{pmatrix},
			\end{aligned}\\
			&\begin{aligned}\label{3.10}
			&r\begin{pmatrix}
			C_c & E_{4} & E_1& E_{2}D_{2} & E_{3}D_{3} & D_1 \\
			F_{2} & 0 & 0& B_{2} & 0 &0\\
			F_{3} & 0 & 0 & 0& B_{3} &0 \\
			F_{1} & 0 & 0 & 0& 0 &B_{1} \\
			C_{4}F_{4} & A_{4} & 0 & 0 &0&0\\
			C_{1} & 0 & A_{1} & 0 &0&0\\
			\end{pmatrix}=r\begin{pmatrix}
			F_{2} & B_{2} & 0 &0\\
			F_{3} & 0 & B_{3} &0\\
			F_{1} & 0 & 0 & B_{1}\\
			\end{pmatrix}+r
			\begin{pmatrix}
			E_{4} &E_1 \\
			A_{4}&0 \\
			0&A_1
			\end{pmatrix},
			\end{aligned}\\
			&\begin{aligned}\label{3.11}
			&r\left(
			\begin{array}{cccccc}
			C_c & E_{1} & E_{2} & E_3& E_{4}D_{4} & D_1 \\
			C_1 & A_1 & 0&0&0&0\\
			C_{2}F_{2} & 0 & A_{2} & 0 &0&0\\
			C_{3}F_{3} & 0 & 0 & A_{3} &0&0\\
			F_{4} & 0 & 0 & 0& B_{4}& 0 \\
			F_{1} & 0 & 0 & 0& 0& B_{1}\\
			\end{array}
			\right)=r\left(
			\begin{array}{ccc}
			E_{1} & E_{2} & E_3\\
			A_{1} & 0 &0\\
			0 & A_{2} &0\\
			0 & 0 & A_{3}
			\end{array}
			\right)+r\begin{pmatrix}
			F_4& B_4 &0\\
			F_1&0&B_1
			\end{pmatrix},
			\end{aligned}\\
			&\begin{aligned}\label{3.12}
			& r\begin{pmatrix}
			C_c & E_{2} & E_1& E_{3}D_{3} & E_{4}D_{4} & D_1 \\
			F_{3} & 0 & 0& B_{3} & 0 &0\\
			F_{4} & 0 & 0 & 0& B_{4} &0 \\
			F_{1} & 0 & 0 & 0& 0 &B_{1} \\
			C_{2}F_{2} & A_{2} & 0 & 0 &0&0\\
			C_{1} & 0 & A_{1} & 0 &0&0\\
			\end{pmatrix}=r \begin{pmatrix}
			F_{3} & B_{3} & 0 &0\\
			F_{4} & 0 & B_{4} &0\\
			F_{1} & 0 & 0 & B_{1}\\
			\end{pmatrix}+r\begin{pmatrix}
			E_{2} &E_1 \\
			A_{2 }&0 \\
			0&A_1
			\end{pmatrix},
			\end{aligned}\\
			&\begin{aligned}\label{3.13}
			&r\begin{pmatrix}
			C_c & E_{3} & E_1& E_{2}D_{2} & E_{4}D_{4} & D_1 \\
			F_{2} & 0 & 0& B_{2} & 0 &0\\
			F_{4} & 0 & 0 & 0& B_{4} &0 \\
			F_{1} & 0 & 0 & 0& 0 &B_{1} \\
			C_{3}F_{3} & A_{3} & 0 & 0 &0&0\\
			C_{1} & 0 & A_{1} & 0 &0&0\\
			\end{pmatrix}=r\begin{pmatrix}
			F_{2} & B_{2} & 0 &0\\
			F_{4} & 0 & B_{4} &0\\
			F_{1} & 0 & 0 & B_{1}\\
			\end{pmatrix}+r\begin{pmatrix}
			E_{3} &E_1 \\
			A_{3}&0 \\
			0&A_1
			\end{pmatrix},
			\end{aligned}\\
			&\begin{aligned}\label{3.14}
			&r\begin{pmatrix}
			C_c & E_{1} & E_4D_4& E_{2}D_{2} & E_{3}D_{3} & D_1 \\
			F_{4} & 0 & B_{4}& 0 & 0 &0\\
			F_{2} & 0 & 0 & B_{2}& 0 &0 \\
			F_{3} & 0 & 0 & 0& B_{3} &0 \\
			F_{1} & 0 & 0 & 0 &0& B_{1}\\
			C_{1} & A_{1} & 0 & 0 &0&0\\
			\end{pmatrix}=r\begin{pmatrix}
			F_{4} & B_{4} & 0 &0&0\\
			F_{2} & 0 & B_{2} &0&0\\
			F_{3} & 0 & 0 & B_{3}&0\\
			F_{1} & 0 & 0 & 0 & B_{1}\\
			\end{pmatrix}+r\begin{pmatrix}
			E_1 \\
			A_1
			\end{pmatrix},
			\end{aligned}
			\end{align}
			\begin{small}
				\begin{align}
				\begin{aligned}\label{3.15}
				&r\left(
				\begin{array}{cccccccccccc}
				C_c & E_2 & E_{1} & 0 & 0&0& E_{4} & E_{3}D_{3} & D_1& 0 &0& E_{4}D_{4} \\
				F_3 & 0 & 0 & 0 & 0&0& 0 & B_{3} & 0& 0 &0& 0 \\
				F_1 & 0 & 0 & 0 & 0&0& 0 & 0 & B_{1}& 0 &0& 0 \\
				0&0&0 & C_c & E_3 & E_{1} & E_{4} & 0&  0 & E_{2}D_{2} & D_1& 0 \\
				0 & 0 & 0 & F_2 & 0&0& 0 & 0 & 0& B_{2} &0& 0 \\
				0 & 0 & 0 & F_1 & 0&0& 0 & 0 & 0& 0 &B_{1}& 0 \\
				F_4 & 0 & 0 & -F_4 & 0&0& 0 & 0 & 0& 0 &0& B_{4} \\
				C_2F_2 & A_2 & 0 & 0 & 0&0& 0 & 0 & 0& 0 &0& 0 \\
				C_1 & 0 & A_1 & 0 & 0&0& 0 & 0 & 0& 0 &0& 0 \\
				0 & 0 & 0 & C_3F_3 & A_3&0& 0 & 0 & 0& 0 &0& 0 \\
				0 & 0 & 0 & C_1 & 0&A_1& 0 & 0 & 0& 0 &0& 0 \\
				0 & 0 & 0 & C_4F_4 & 0&0& A_4 & 0 & 0& 0 &0& 0 \\
				\end{array}
				\right)\\
				&=r\left(
				\begin{array}{ccccccc}
				F_{3} & 0 & B_{3} & 0 & 0 &0 &0\\
				F_{1} & 0 & 0 & B_{1} & 0 &0 &0\\
				0 & F_{2} & 0 & 0 & B_{2} &0 &0\\
				0 & F_{1} & 0 & 0 & 0 &B_{1} &0\\
				F_{4} & F_{4} & 0 & 0 &0&0&B_{4} \\
				\end{array}
				\right)+r\left(
				\begin{array}{ccccc}
				E_2& E_{1} & 0 &0 & E_{4} \\
				0 & 0& E_{3} & E_{1}& E_4 \\
				A_{2} & 0 & 0 &0&0\\
				0 & A_{1} & 0 &0&0\\
				0 & 0 & A_{3} &0&0\\
				0 & 0 & 0 &A_{1}&0\\
				0 & 0 & 0 &0&A_{4}
				\end{array}
				\right).
				\end{aligned}
				\end{align}
			\end{small}
			In this case, the general solution to system \eqref{eq1} is
			\begin{equation}\label{solution}
			\begin{aligned}
			&U=A_{1}^{\dagger}C_{1}+L_{A_1}S_1,\ V=D_{1}B_{1}^{\dagger}+S_2R_{A_1},\ X=A_{2}^{\dagger}C_{2}+L_{A_{2}}D_{2}B_{2}^{\dagger}+L_{A_{2}}U_{1}R_{B_{2}},\\
			&Y=A_{3}^{\dagger}C_{3}+L_{A_{3}}D_{3}B_{3}^{\dagger}+L_{A_{3}}U_{2}R_{B_{3}},\ Z=A_{4}^{\dagger}C_{4}+L_{A_{4}}D_{4}B_{4}^{\dagger}+L_{A_{4}}U_{3}R_{B_{4}},
			\end{aligned}
			\end{equation}
			where
			\begin{align*}
			&S_{1}=A_{11}^{\dagger}(T_1-A_{22}XB_{22}-A_{33}YB_{33}-A_{44}ZB_{44})-A_{11}^{\dagger}W_{11}B_{11}+L_{A_{11}}W_{12},\\
			&S_{2}=R_{A_{11}}(T_1-A_{22}XB_{22}-A_{33}YB_{33}-A_{44}ZB_{44})B_{11}^{\dagger}+A_{11}A_{11}^{\dagger}W_{11}+W_{13}R_{B_{11}},\\
	&U_{1}=A_{12}^{\dagger}TB_{21}^{\dagger}-A_{12}^{\dagger}A_{13}M_{1}^{\dagger}TB_{21}^{\dagger}-A_{12}^{\dagger}S_{1}A_{13}^{\dagger}TN_{1}^{\dagger}B_{31}B_{21}^{\dagger}\\
			&-A_{12}^{\dagger}S_{1}U_{4}R_{N_{1}}B_{31}B_{21}^{\dagger}+L_{A_{21}}U_{5}+U_{6}R_{B_{21}},\\
			&U_{2}=M_{1}^{\dagger}TB_{31}^{\dagger}+S_{1}^{\dagger}S_{1}A_{22}^{\dagger}TN_{1}^{\dagger}+L_{M_{1}}L_{S_{1}}U_{7}+U_{8}R_{B_{22}}+L_{M_{1}}U_{4}R_{N_{1}},\\
			&U_{3}=F_{10}+L_{G_{2}}V_{1}+V_{2}R_{H_{1}}+L_{G_{1}}V_{3}R_{H_{2}}, \ or
U_{3}=F_{20}-L_{G_{4}}W_{1}-W_{2}R_{H_{3}}-L_{G_{3}}W_{3}R_{H_{4}},\\
			&V_{1}=(I_{m},\ 0)\left[C_{11}^{\dagger}(F-C_{22}V_{3}D_{22}-C_{33}W_{3}D_{33})\right]-(I_{m},\ 0)\left[C_{11}^{\dagger}U_{11}D_{11}-L_{C_{11}}U_{12}\right],\\
			&W_{1}=(0,\ I_{m})\left[C_{11}^{\dagger}(F-C_{22}V_{3}D_{22}-C_{33}W_{3}D_{33})\right]-(0,\ I_{m})\left[C_{11}^{\dagger}U_{11}D_{11}-L_{C_{11}}U_{12}\right],\\
			&W_{2}=\left[R_{C_{11}}(F-C_{22}V_{3}D_{22}-C_{33}W_{3}D_{33})D_{11}^{\dagger}\right]\left(
			\begin{array}{c}
			0 \\
			I_{n} \\
			\end{array}
			\right)+\left[C_{11}C_{11}^{\dagger}U_{11}+U_{21}R_{D_{11}}\right]\left(
			\begin{array}{c}
			0 \\
			I_{n} \\
			\end{array}
			\right),\\
\end{align*}
			\begin{align*}
			&V_{2}=\left[R_{C_{11}}(F-C_{22}V_{3}D_{22}-C_{33}W_{3}D_{33})D_{11}^{\dagger}\right]
			\left(                                                                                                                                                                                                  \begin{array}{c}
			I_{n} \\
			0 \\
			\end{array}
			\right)+\left[C_{11}C_{11}^{\dagger}U_{11}+U_{21}R_{D_{11}}\right]
			\left(                                                                                                                                                                                                  \begin{array}{c}
			I_{n} \\
			0 \\
			\end{array}
			\right),\\
			&V_{3}=E_{11}^{\dagger}FE_{33}^{\dagger}-E_{11}^{\dagger}E_{22}M^{\dagger}FE_{33}^{\dagger}-E_{11}^{\dagger}SE_{22}^{\dagger}FN^{\dagger}E_{44}E_{33}^{\dagger}\\
			&-E_{11}^{\dagger}SU_{31}R_{N}E_{44}E_{33}^{\dagger}+L_{E_{11}}U_{32}+U_{33}R_{E_{33}},\\
			&W_{3}=M^{\dagger}FE_{44}^{\dagger}+S^{\dagger}SE_{22}^{\dagger}FN^{\dagger}+L_{M}L_{S}U_{41}+L_{M}U_{31}R_{N}+U_{42}R_{E_{44}},
			\end{align*}
			where $T=T_{1}-A_{33}U_{3}B_{33}$, $U_{j} (j=\overline{4,6})$, $U_{i1} (i=\overline{1,4})$, $U_{12}$, $U_{32}$, $U_{33}$ and $U_{42}$ are arbitrary matrices with appropriate shapes over $\mathbb{H}$. $m$ is the column number of $A_4$ and $n$ is the row number of $B_4$.
		\end{theorem}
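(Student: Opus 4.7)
The plan is to reduce system \eqref{eq1} to the Sylvester-type equation \eqref{eq5}, whose solvability has already been settled by \lemref{lem2.3}, and then to translate the resulting conditions back to the data of \eqref{eq1}.

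First, I would apply Lemmas \ref{lemma2.2} and \ref{lemma2.3} to the single equations $A_1U=C_1$ and $VB_1=D_1$, and \lemref{lem2.2} to each of the three pairs $A_iX=C_i$, $XB_i=D_i$ for $i=2,3,4$. These give a first round of necessary and sufficient conditions: $R_{A_i}C_i=0$, $D_iL_{B_i}=0$ for $i=\overline{1,4}$, together with the compatibility equalities $A_iD_i=C_iB_i$ for $i=\overline{2,4}$ in \eqref{eqnew}; they also produce parameterised general solutions $U=A_1^{\dagger}C_1+L_{A_1}S_1$, $V=D_1B_1^{\dagger}+S_2R_{B_1}$, and expressions of the form $A_i^{\dagger}C_i+L_{A_i}D_iB_i^{\dagger}+L_{A_i}U_iR_{B_i}$ for $X,Y,Z$, in which $S_1,S_2,U_1,U_2,U_3$ are free.

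Substituting these parameterisations into the coupled equation $E_1U+VF_1+E_2XF_2+E_3YF_3+E_4ZF_4=C_c$ and collecting the known quantities on the right gives
\begin{equation*}
A_{11}S_1+S_2B_{11}+A_{22}U_1B_{22}+A_{33}U_2B_{33}+A_{44}U_3B_{44}=T_1,
\end{equation*}
with $A_{ii}=E_iL_{A_i}$, $B_{ii}=R_{B_i}F_i$ and $T_1$ as in \eqref{3.1}. This is precisely equation \eqref{eq5} of \lemref{lem2.3} after relabelling the coefficients. Applying \lemref{lem2.3}(2) directly yields the Moore--Penrose inverse conditions $R_{G_j}L_j=0$, $L_jL_{H_j}=0$ ($j=\overline{1,4}$) and $R_{E_{22}}EL_{E_{33}}=0$ of \eqref{eqnewcondition}, together with explicit formulas for $S_1,S_2,U_1,U_2,U_3$; reinserting them into $U,V,X,Y,Z$ gives the formulas \eqref{solution}. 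This settles the equivalence (1)$\Leftrightarrow$(2).

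For (2)$\Leftrightarrow$(3), I would translate the rank equalities of \lemref{lem2.3}(3) into \eqref{3.7}--\eqref{3.15} by means of \lemref{lem2}. Each expression such as $r(A_{22},A_{33},A_{44},A_{11})$ hides projectors $L_{A_i}$ and $R_{B_i}$ inside the blocks $A_{ii}$ and $B_{ii}$; by \lemref{lem2} these projectors can be replaced by additional block rows and columns containing the original $A_i$ and $B_i$, after which elementary block row/column operations bring each expression into the form stated in \eqref{3.7}--\eqref{3.15}. The main technical obstacle is the last identity \eqref{3.15}, which corresponds to the large partitioned rank identity at the end of \lemref{lem2.3}(3); unpacking all of its projectors simultaneously on both sides requires several successive applications of \lemref{lem2} combined with careful block manipulations to avoid spurious rank contributions, after which the claimed equality emerges by comparison with \lemref{lem2.3}(3).
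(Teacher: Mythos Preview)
Your proposal is correct and follows essentially the same route as the paper: split \eqref{eq1} into the uncoupled constraints and the coupled equation, parameterise the former via Lemmas \ref{lemma2.2}--\ref{lem2.2}, substitute to obtain an instance of \eqref{eq5}, invoke \lemref{lem2.3} for $(1)\Leftrightarrow(2)$ and the general solution, and then convert the rank equalities of \lemref{lem2.3}(3) into \eqref{3.7}--\eqref{3.15} via \lemref{lem2} and elementary block operations. The paper carries out exactly these steps, with the large identity \eqref{3.15} handled just as you anticipate.
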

		\begin{proof}
			$(1)\Leftrightarrow (2)$ It is clear that the system of matrix equations \eqref{eq1} is solvable if and only if both
			
			\begin{equation}\label{eq315}
			\begin{aligned}
			&A_{1}U=C_1,\ VB_1=D_1,\\
			&A_{2}X=C_{2},\ XB_{2}=D_{2},\\
			&A_{3}Y=C_{3},\ YB_{3}=D_{3},\\
			&A_{4}Z=C_{4},\ ZB_{4}=D_{4}
			\end{aligned}
			\end{equation}
			and
			\begin{equation}\label{eq316}
			\begin{aligned}
			E_1U+VF_1+E_{2}XF_{2}+E_{3}YF_{3}+E_{4}ZF_{4}=C_c
			\end{aligned}
			\end{equation}
			are solvable. It follows from Lemma \ref{lemma2.2}, Lemma \ref{lemma2.3}, and Lemma \ref{lem2.2} that the system of matrix equations \eqref{eq315} has a solution if and only if \eqref{eqnew} holds and
			\begin{equation}\label{eq317}
			\begin{aligned}
			&R_{A_{i}}C_{i}=0,\ B_{i}L_{D_{i}}=0\ (i=\overline{1,4}).
			\end{aligned}
			\end{equation}
			In this case,	the general solution of equations \eqref{eq315} can be expressed as
			\begin{equation}\label{eq318}
			\begin{aligned}
			&U=A_{1}^{\dagger}C_{1}+L_{A_1}S_1,\ V=D_{1}B_{1}^{\dagger}+S_2R_{A_1},\\
			&X=A_{2}^{\dagger}C_{2}+L_{A_{2}}D_{2}B_{2}^{\dagger}+L_{A_{2}}U_{1}R_{B_{2}},\\
			&Y=A_{3}^{\dagger}C_{3}+L_{A_{3}}D_{3}B_{3}^{\dagger}+L_{A_{3}}U_{2}R_{B_{3}},\\ &Z=A_{4}^{\dagger}C_{4}+L_{A_{4}}D_{4}B_{4}^{\dagger}+L_{A_{4}}U_{3}R_{B_{4}}.
			\end{aligned}
			\end{equation}
			
			By substituting $U,V, X,Y,Z$ from \eqref{eq318} into \eqref{eq316} yields	
			\begin{equation}\label{eq319}
			\begin{aligned}
			A_{11}S_1+S_2B_{11}+A_{22}U_{1}B_{22}+A_{33}U_{2}B_{33}+A_{44}U_{3}B_{44 }=T_{1},
			\end{aligned}
			\end{equation}
			where $A_{ii}$, $B_{ii}(i=\overline{1,4})$, and $T_{1}$ are defined by \eqref{3.1}. By Lemma \ref{lem2.3}, we obtain that equation \eqref{eq319} has a solution if and only if
			\begin{equation}\label{eqc1}
			\begin{aligned}
			&R_{G_{i}}L_{i}=0,\ L_{i}L_{H_{i}}=0\ (i=\overline{1,4}),\ R_{E_{22}}EL_{E_{33}}=0.
			\end{aligned}
			\end{equation}
			Under these conditions, the general solution to the matrix equation \eqref{eq319} can be expressed as
			\begin{align*}
			&S_{1}=A_{11}^{\dagger}(T_1-A_{22}XB_{22}-A_{33}YB_{33}-A_{44}ZB_{44})-A_{11}^{\dagger}W_{11}B_{11}+L_{A_{11}}W_{12},\\
			&S_{2}=R_{A_{11}}(T_1-A_{22}XB_{22}-A_{33}YB_{33}-A_{44}ZB_{44})B_{11}^{\dagger}+A_{11}A_{11}^{\dagger}W_{11}+W_{13}R_{B_{11}},\\
	&U_{1}=A_{12}^{\dagger}TB_{21}^{\dagger}-A_{12}^{\dagger}A_{13}M_{1}^{\dagger}TB_{21}^{\dagger}-A_{12}^{\dagger}S_{1}A_{13}^{\dagger}TN_{1}^{\dagger}B_{31}B_{21}^{\dagger}\\
			&-A_{12}^{\dagger}S_{1}U_{4}R_{N_{1}}B_{31}B_{21}^{\dagger}+L_{A_{21}}U_{5}+U_{6}R_{B_{21}},\\
			&U_{2}=M_{1}^{\dagger}TB_{31}^{\dagger}+S_{1}^{\dagger}S_{1}A_{22}^{\dagger}TN_{1}^{\dagger}+L_{M_{1}}L_{S_{1}}U_{7}+U_{8}R_{B_{22}}+L_{M_{1}}U_{4}R_{N_{1}},\\
			&U_{3}=F_{10}+L_{G_{2}}V_{1}+V_{2}R_{H_{1}}+L_{G_{1}}V_{3}R_{H_{2}}, \ or
U_{3}=F_{20}-L_{G_{4}}W_{1}-W_{2}R_{H_{3}}-L_{G_{3}}W_{3}R_{H_{4}},\\
			&V_{1}=(I_{m},\ 0)\left[C_{11}^{\dagger}(F-C_{22}V_{3}D_{22}-C_{33}W_{3}D_{33})\right]-(I_{m},\ 0)\left[C_{11}^{\dagger}U_{11}D_{11}-L_{C_{11}}U_{12}\right],\\
			&W_{1}=(0,\ I_{m})\left[C_{11}^{\dagger}(F-C_{22}V_{3}D_{22}-C_{33}W_{3}D_{33})\right]-(0,\ I_{m})\left[C_{11}^{\dagger}U_{11}D_{11}-L_{C_{11}}U_{12}\right],\\
			&W_{2}=\left[R_{C_{11}}(F-C_{22}V_{3}D_{22}-C_{33}W_{3}D_{33})D_{11}^{\dagger}\right]\left(
			\begin{array}{c}
			0 \\
			I_{n} \\
			\end{array}
			\right)+\left[C_{11}C_{11}^{\dagger}U_{11}+U_{21}R_{D_{11}}\right]\left(
			\begin{array}{c}
			0 \\
			I_{n} \\
			\end{array}
			\right),\\
			&V_{2}=\left[R_{C_{11}}(F-C_{22}V_{3}D_{22}-C_{33}W_{3}D_{33})D_{11}^{\dagger}\right]
			\left(                                                                                                                                                                                                  \begin{array}{c}
			I_{n} \\
			0 \\
			\end{array}
			\right)+\left[C_{11}C_{11}^{\dagger}U_{11}+U_{21}R_{D_{11}}\right]
			\left(                                                                                                                                                                                                  \begin{array}{c}
			I_{n} \\
			0 \\
			\end{array}
			\right),\\
			&V_{3}=E_{11}^{\dagger}FE_{33}^{\dagger}-E_{11}^{\dagger}E_{22}M^{\dagger}FE_{33}^{\dagger}-E_{11}^{\dagger}SE_{22}^{\dagger}FN^{\dagger}E_{44}E_{33}^{\dagger}\\
			&-E_{11}^{\dagger}SU_{31}R_{N}E_{44}E_{33}^{\dagger}+L_{E_{11}}U_{32}+U_{33}R_{E_{33}},\\
			&W_{3}=M^{\dagger}FE_{44}^{\dagger}+S^{\dagger}SE_{22}^{\dagger}FN^{\dagger}+L_{M}L_{S}U_{41}+L_{M}U_{31}R_{N}+U_{42}R_{E_{44}},
			\end{align*}
			where $T=T_{1}-A_{33}U_{3}B_{33}$, $U_{j} (j=\overline{4,6})$, $U_{i1} (i=\overline{1,4})$, $U_{12}$, $U_{32}$, $U_{33}$ and $U_{42}$ are arbitrary matrices with appropriate shapes over $\mathbb{H}$. $m$ is the column number of $A_4$ and $n$ is the row number of $B_4$.

			 To sum up, both the equations \eqref{eq315} and the equation \eqref{eq316} are solvable if and only if conditions \eqref{eqnew}, \eqref{eq317} and \eqref{eqc1} hold, i.e, the system of matrix equations \eqref{eq1} has a solution if and only if \eqref{eqnew} and \eqref{eqnewcondition} hold. Under these conditions, the general solution to equations \eqref{eq1} can be expressed as \eqref{solution}.
			
			$(2)\Leftrightarrow (3)$ We first show that $\eqref{eq317}\Leftrightarrow \eqref{3.6}$. According to Lemma \ref{lem2}, it follows that
			\begin{equation}\label{eqc2}
			\begin{aligned}
			&R_{A_{i}}C_{i}=0\Leftrightarrow r(R_{A_{i}}C_{i})=0\Leftrightarrow r(C_{i},\ A_{i})=r(A_{i})(i=1,2,3)\Leftrightarrow \eqref{3.6},\\
			&D_{j}L_{B_{j}}=0\Leftrightarrow r(D_{j}L_{B_{j}})=0\Leftrightarrow r\left(
			\begin{array}{c}
			D_{j} \\
			B_{j} \\
			\end{array}
			\right)=r(B_{j})(j=1,2,3)\Leftrightarrow \eqref{3.6}.
			\end{aligned}
			\end{equation}
			It follows from \eqref{eqc2} that $\eqref{eq317}\Leftrightarrow \eqref{3.6}$.
			
			We now turn to show that \eqref{eqc1} holds if and only if \eqref{3.7} to \eqref{3.15} hold. By Lemma \ref{lem2.3}, \eqref{eqc1} is equivalent to

			\begin{align}
			&\begin{aligned}\label{eq320}
			&r\left(
			\begin{array}{ccccc}
			T_1 & A_{22} & A_{33} & A_{44} & A_{11} \\
			B_{11} & 0 & 0 & 0 & 0 \\
			\end{array}
			\right)=r(B_{11})+r(A_{22},\ A_{33},\ A_{44},\ A_{11}),
			\end{aligned}\\
			&\begin{aligned}\label{eq32}	
			&r\begin{pmatrix}
			T_1 & A_{22} & A_{44} & A_{11} \\
			B_{33} & 0 & 0 & 0 \\
			B_{11} & 0 & 0 & 0 \\
			\end{pmatrix}=r(A_{22},\ A_{44},\ A_{11})+r\begin{pmatrix}
			B_{33} \\
			B_{11} \\
			\end{pmatrix},
			\end{aligned}\\
			&\begin{aligned}\label{eq322}
			&r\begin{pmatrix}
			T_1 & A_{33} & A_{44} & A_{11} \\
			B_{22} & 0 & 0 & 0 \\
			B_{11} & 0 & 0 & 0 \\
			\end{pmatrix}=r(A_{33},\ A_{44},\ A_{11})+r
			\begin{pmatrix}
			B_{22} \\
			B_{11} \\
			\end{pmatrix},
			\end{aligned}\\
			&\begin{aligned}\label{eq323}
			&r
			\begin{pmatrix}
			T_1 & A_{44} & A_{11} \\
			B_{22} & 0 & 0 \\
			B_{33} & 0 & 0 \\
			B_{11} & 0 & 0 \\
			\end{pmatrix}=r
			\begin{pmatrix}
			B_{22} \\
			B_{33} \\
			B_{11} \\
			\end{pmatrix}
			+r(A_{44},\ A_{11}),
			\end{aligned}\\
			&\begin{aligned}\label{eq324}
			&r
			\begin{pmatrix}
			T_1 & A_{22} & A_{33} & A_{11} \\
			B_{44} & 0 & 0 & 0 \\
			B_{11} & 0 & 0 & 0 \\
			\end{pmatrix}=r(A_{22},\ A_{33},\ A_{11})+r
			\begin{pmatrix}
			B_{44} \\
			B_{11} \\
			\end{pmatrix},
			\end{aligned}\\
			&\begin{aligned}\label{eq325}
			&r\left(
			\begin{array}{ccc}
			T_1 & A_{22} & A_{11} \\
			B_{33} & 0 & 0 \\
			B_{44} & 0 & 0 \\
			B_{11} & 0 & 0 \\
			\end{array}
			\right)=r\left(
			\begin{array}{c}
			B_{33} \\
			B_{44} \\
			B_{11} \\
			\end{array}
			\right)+r(A_{22},\ A_{11}),
			\end{aligned}\\
			&\begin{aligned}\label{eq326}
			&r\left(
			\begin{array}{ccc}
			T_1 & A_{33} & A_{11} \\
			B_{22} & 0 & 0 \\
			B_{44} & 0 & 0 \\
			B_{11} & 0 & 0 \\
			\end{array}
			\right)=r\left(
			\begin{array}{c}
			B_{22} \\
			B_{44} \\
			B_{11} \\
			\end{array}
			\right)+r(A_{33},\ A_{11}),
			\end{aligned}\\
			&\begin{aligned}\label{eq327}	&r\left(
			\begin{array}{cc}
			T_1 & A_{11} \\
			B_{22} & 0 \\
			B_{33} & 0 \\
			B_{44} & 0 \\
			B_{11} & 0 \\
			\end{array}
			\right)=r\left(
			\begin{array}{c}
			B_{22} \\
			B_{33} \\
			B_{44} \\
			B_{11} \\
			\end{array}
			\right)+r(A_{11}),
			\end{aligned}\\
			&\begin{aligned} \nonumber
			&r\left(
			\begin{array}{ccccccc}
			T_1 & A_{22} & A_{11} & 0 & 0 & 0 & A_{44} \\
			B_{33} & 0 & 0 & 0 & 0 & 0 & 0 \\
			B_{11} & 0 & 0 & 0 & 0 & 0 & 0 \\
			0 & 0 & 0 & -T_1 & A_{33} & A_{11} & A_{44} \\
			0 & 0 & 0 & B_{22} & 0 & 0 & 0 \\
			0 & 0 & 0 & B_{11} & 0 & 0 & 0 \\
			B_{44} & 0 & 0 & B_{44} & 0 & 0 & 0 \\
			\end{array}
			\right)\\
			\end{aligned}
\end{align}
			\begin{align}
			&\begin{aligned}	\label{eq328}
			&=r\left(
			\begin{array}{cc}
			B_{33} & 0 \\
			B_{11} & 0 \\
			0 & B_{22} \\
			0 & B_{11} \\
			B_{44} & B_{44} \\
			\end{array}
			\right)+r\left(
			\begin{array}{ccccc}
			A_{22} & A_{11} & 0 & 0 & A_{44} \\
			0 & 0 & A_{33} & A_{11} & A_{44} \\
			\end{array}
			\right),
			\end{aligned}
			\end{align}
			respectively. Hence, we only show that
			\begin{align*}
			(19+i)\Leftrightarrow  (36+i) \ (i=\overline{1,9}),
			\end{align*}
respectively. When we show that \eqref{eqc1} holds if and only if \eqref{3.7} to \eqref{3.15} hold, respectively. It is easy to know that there exist the $U_0, V_0$, $X_{0}, Y_{0}$ and $Z_{0}$ of the equations \eqref{eq1} such that
			\begin{equation}\label{eqq1}
			\begin{aligned}
			&A_{1}U_{0}=C_{1},\ V_{0}B_{1}=D_{1},\\
			&A_{2}X_{0}=C_{2},\ X_{0}B_{2}=D_{2},\\
			&A_{3}Y_{0}=C_{3},\ Y_{0}B_{3}=D_{3},\\
			&A_{4}Z_{0}=C_{4},\ Z_{0}B_{4}=D_{4},
			\end{aligned}
			\end{equation}
			where
			\begin{align*}
			&	U_0=A_{1}^{\dagger}C_1,\ V_0=D_{1}B_{1}^{\dagger},\
			X_{0}=A_{1}^{\dagger}C_{1}+L_{A_{1}}D_{1}B_{1}^{\dagger},\\ &Y_{0}=A_{2}^{\dagger}C_{2}+L_{A_{2}}D_{2}B_{2}^{\dagger},\ Z_{0}=A_{3}^{\dagger}C_{3}+L_{A_{3}}D_{3}B_{3}^{\dagger},
			\end{align*}
			 It follows from Lemma \ref{lem2}, \eqref{eqq1} and elementary transformations that
			\begin{align*}
			&\eqref{eq320}\Leftrightarrow r(T_1,\ E_{1}L_{A_{1}},\ E_{2}L_{A_{2}},\ E_{3}L_{A_{3}})=r(E_{1}L_{A_{1}},\ E_{2}L_{A_{2}},\ E_{3}L_{A_{3}})\\
			&\Leftrightarrow r\left(
			\begin{array}{cccc}
			C & E_{1} & E_{2} & E_{3} \\
			C_{1}F_{1} & A_{1} & 0 & 0 \\
			C_{2}F_{2} & 0 & A_{2} & 0 \\
			C_{3}F_{3} & 0 & 0 & A_{3} \\
			\end{array}
			\right)=r\left(
			\begin{array}{ccc}
			E_{1} & E_{2} & E_{3} \\
			A_{1} & 0 & 0 \\
			0 & A_{2} & 0 \\
			0 & 0 & A_{3} \\
			\end{array}
			\right)\Leftrightarrow \eqref{3.7}.
			\end{align*}
Similarly, we can show that $\eqref{eq32}\Leftrightarrow \eqref{3.8},\ \eqref{eq322}\Leftrightarrow \eqref{3.9}, \eqref{eq323}\Leftrightarrow \eqref{3.10},
			\eqref{eq324}\Leftrightarrow \eqref{3.11},
			\eqref{eq325}\Leftrightarrow \eqref{3.12}, \eqref{eq326}\Leftrightarrow \eqref{3.13},
			\eqref{eq327}\Leftrightarrow \eqref{3.14},$
			\begin{align*}	
			&\eqref{eq328} \Leftrightarrow\\
			& r\left(
			\begin{array}{ccccccc}
			T_1 & E_{2}L_{A_2} & E_{1}L_{A_1} & 0 & 0 & 0 & E_{4}L_{A_4} \\
			R_{B_3}F_3 & 0 & 0 & 0 & 0 & 0 & 0 \\
			R_{B_1}F_1 & 0 & 0 & 0 & 0 & 0 & 0 \\
			0 & 0 & 0 & -T_1 & E_{3}L_{A_3} & E_{1}L_{A_1} & E_{4}L_{A_4} \\
			0 & 0 & 0 & R_{B_2}F_2 & 0 & 0 & 0 \\
			0 & 0 & 0 & R_{B_1}F_1 & 0 & 0 & 0 \\
			R_{B_4}F_4 & 0 & 0 & R_{B_4}F_4 & 0 & 0 & 0 \\
			\end{array}
			\right)
\end{align*}
\begin{align*}	
&=r\left(
			\begin{array}{cc}
			R_{B_3}F_3 & 0 \\
			R_{B_1}F_1 & 0 \\
			0 & R_{B_2}F_2 \\
			0 & R_{B_1}F_1 \\
			R_{B_4}F_4 & R_{B_4}F_4 \\
			\end{array}
			\right)+r\left(
			\begin{array}{ccccc}
			E_{2}L_{A_2} & E_{1}L_{A_1} & 0 & 0 & E_{4}L_{A_4} \\
			0 & 0 & E_{3}L_{A_3} & E_{1}L_{A_1} & E_{4}L_{A_4} \\
			\end{array}
			\right)\\
&\Leftrightarrow r\left(
\begin{array}{cccccccccccc}
C_c & E_2 & E_{1} & 0 & 0&0& E_{4} & E_{3}D_{3} & D_1& 0 &0& E_{4}D_{4} \\
F_3 & 0 & 0 & 0 & 0&0& 0 & B_{3} & 0& 0 &0& 0 \\
F_1 & 0 & 0 & 0 & 0&0& 0 & 0 & B_{1}& 0 &0& 0 \\
0&0&0 & C_c & E_3 & E_{1} & E_{4} & 0&  0 & E_{2}D_{2} & D_1& 0 \\
0 & 0 & 0 & F_2 & 0&0& 0 & 0 & 0& B_{2} &0& 0 \\
0 & 0 & 0 & F_1 & 0&0& 0 & 0 & 0& 0 &B_{1}& 0 \\
F_4 & 0 & 0 & -F_4 & 0&0& 0 & 0 & 0& 0 &0& B_{4} \\
C_2F_2 & A_2 & 0 & 0 & 0&0& 0 & 0 & 0& 0 &0& 0 \\
C_1 & 0 & A_1 & 0 & 0&0& 0 & 0 & 0& 0 &0& 0 \\
0 & 0 & 0 & C_3F_3 & A_3&0& 0 & 0 & 0& 0 &0& 0 \\
0 & 0 & 0 & C_1 & 0&A_1& 0 & 0 & 0& 0 &0& 0 \\
0 & 0 & 0 & C_4F_4 & 0&0& A_4 & 0 & 0& 0 &0& 0 \\
\end{array}
\right)\\
&=r\left(
\begin{array}{ccccccc}
F_{3} & 0 & B_{3} & 0 & 0 &0 &0\\
F_{1} & 0 & 0 & B_{1} & 0 &0 &0\\
0 & F_{2} & 0 & 0 & B_{2} &0 &0\\
0 & F_{1} & 0 & 0 & 0 &B_{1} &0\\
F_{4} & F_{4} & 0 & 0 &0&0&B_{4} \\
\end{array}
\right)+r\left(
\begin{array}{ccccc}
E_2& E_{1} & 0 &0 & E_{4} \\
0 & 0& E_{3} & E_{1}& E_4 \\
A_{2} & 0 & 0 &0&0\\
0 & A_{1} & 0 &0&0\\
0 & 0 & A_{3} &0&0\\
0 & 0 & 0 &A_{1}&0\\
0 & 0 & 0 &0&A_{4}
\end{array}
\right)\Leftrightarrow \eqref{3.15}.
\end{align*}
			We have thus proved the theorem.
\end{proof}

\textbf{Remark 3.2.} Chu et al. gave potential applications of the maximal and minimal ranks in the discipline of control theory(e.g., \cite{D.L. 1998}, \cite{D.L. 2000}, \cite{D.L. 2009}). We may consider the rank bounds of the general solution of the equation \eqref{eq1}.
		
		 Next, we discuss the special case of \eqref{eq1}. Let $A_{i}$, $B_{i}$, $C_{i}$, $D_{i}$, $E_{i}$, $F_{i}\ (i=\overline{1,3})$ and $C$ be given matrices of appropriate sizes over  $\mathbb{H}$.
		\begin{align*}
		&E_{i}L_{A_{i}}=A_{ii},\ R_{B_{i}}F_{i}=B_{ii}(i=\overline{1,3}),\ M_{1}=R_{A_{11}}A_{22},\ N_{1}=B_{22}L_{B_{11}},\ S_{1}=A_{22}L_{M_{1}},\\
		&  G=R_{M_{1}}R_{A_{11}},\ T_{1}=C-\left[\sum_{i=1}^{3}E_{i}(A_{i}^{\dagger}C_{i}+L_{A_{i}}D_{i}B_{i}^{\dagger})F_{i}\right],\ G_{1}=GA_{33},\ G_{2}=R_{A_{11}}A_{33},\\
		&G_{3}=R_{A_{22}}A_{33},\ G_{4}=A_{33},\ H=L_{B_{11}}L_{N_{1}},\ H_{1}=B_{33},\ L_{1}=GT_{1}, \ H_{2}=B_{33}L_{B_{22}},\\
		&  H_{3}=B_{33}L_{B_{11}},\ H_{4}=B_{33}D,\ L_{2}=R_{A_{11}}T_{1}L_{B_{22}},\ L_{3}=R_{A_{22}}T_{1}L_{B_{11}},\ L_{4}=T_{1}H,\\
		&C_{11}=(L_{G_{2}},\ L_{G_{4}}),\ D_{11}=\left(
		\begin{array}{c}
		R_{H_{1}} \\
		R_{H_{3}} \\
		\end{array}
		\right),\ C_{22}=L_{G_{1}},\  D_{22}=R_{H_{2}},\ C_{33}=L_{G_{3}},\ D_{33}=R_{H_{4}},
\end{align*}
\begin{align*}
		& E_{11}=R_{C_{11}}C_{22},\ E_{22}=R_{C_{11}}C_{33},\ E_{33}=D_{22}L_{D_{11}},\ E_{44}=D_{33}L_{D_{11}},\ M=R_{E_{11}}E_{22},\ N=E_{44}L_{E_{33}},\\
		&F=F_{20}-F_{10},\ E=R_{C_{11}}FL_{D_{11}},\ S=E_{22}L_{M},\ F_{11}=G_{2}L_{G_{1}},\\
&G_{5}=L_{2}-G_{2}G_{1}^{\dagger}L_{1}H_{1}^{\dagger}H_{2},\ F_{22}=G_{4}L_{G_{3}},\ G_{6}=L_{4}-G_{4}G_{3}^{\dagger}L_{3}H_{3}^{\dagger}H_{4},\\
		& F_{10}=G_{1}^{\dagger}L_{1}H_{1}^{\dagger}+L_{G_{1}}G_{2}^{\dagger}L_{2}H_{2}^{\dagger},\ F_{20}=G_{3}^{\dagger}L_{3}H_{3}^{\dagger}+L_{G_{3}}G_{4}^{\dagger}L_{4}H_{4}^{\dagger}.
		\end{align*}

		\begin{theorem}\label{them3.2}  The following statements are equivalent:
			
			$\mathrm{(1)}$ system \eqref{eq7-1} has a solution.
			
			$\mathrm{(2)}$
			\begin{align}\label{eqnew1}
			& A_{i}D_{i}=C_{i}B_{i},\ (i=\overline{1,3})
			\end{align}
			
			and
			\begin{align*}
			&R_{A_{i}}C_{i}=0,\ D_{i}L_{B_{i}}=0,\ R_{G_{j}}L_{j}=0,\ L_{j}L_{H_{j}}=0\ (i=\overline{1,3}, j=\overline{1,4}),\ R_{E_{22}}EL_{E_{33}}=0.
			\end{align*}
		
			$\mathrm{(3)}$\ \eqref{eqnew1} holds and for $i=\overline{1,3}$.
			\begin{align*}
			&r(C_{i},\ A_{i})=r(A_{i}),\ r\left(
			\begin{array}{c}
			D_{i} \\
			B_{i} \\
			\end{array}
			\right)=r(B_{i}),\\
			&r\left(
			\begin{array}{cccc}
			C & E_{1} & E_{2} & E_{3} \\
			C_{1}F_{1} & A_{1} & 0 & 0 \\
			C_{2}F_{2} & 0 & A_{2} & 0 \\
			C_{5}F_{3} & 0 & 0 & A_{3} \\
			\end{array}
			\right)=r\left(
			\begin{array}{ccc}
			E_{1} & E_{2} & E_{3} \\
			A_{1} & 0 & 0 \\
			0 & A_{2} & 0 \\
			0 & 0 & A_{3} \\
			\end{array}
			\right),\\
&r\begin{pmatrix}
			C & E_{1} & E_{3} & E_{2}D_{2} \\
			F_{2} & 0 & 0 & B_{2} \\
			C_{1}F_{1} & A_{1} & 0 & 0 \\
			C_{3}F_{3} & 0 & A_{3} & 0 \\
			\end{pmatrix}=r\begin{pmatrix}
			E_{1} & E_{3} \\
			A_{1} & 0 \\
			0 & A_{3} \\
			\end{pmatrix}+r(F_{2},\ B_{2}),\\
			&r\begin{pmatrix}
			C & E_{3} & E_{2} & E_{1}D_{1} \\
			F_{1} & 0 & 0 & B_{1} \\
			C_{3}F_{3} & A_{3} & 0 & 0 \\
			C_{2}F_{2} & 0 & A_{2} & 0 \\
			\end{pmatrix}=r\begin{pmatrix}
			E_{3} & E_{2} \\
			A_{1} & 0 \\
			0 & A_{3} \\
			\end{pmatrix}+r(F_{1},\ B_{1}),\\
			&r\begin{pmatrix}
			C & E_{3} & E_{1}D_{1} & E_{2}D_{2} \\
			F_{1} & 0 & B_{1} & 0 \\
			F_{2} & 0 & 0 & B_{2} \\
			C_{3}F_{3} & A_{3} & 0 & 0 \\
			\end{pmatrix}=r\begin{pmatrix}
			F_{1} & B_{1} & 0 \\
			F_{2} & 0 & B_{2} \\
			\end{pmatrix}+r\begin{pmatrix}
			E_{3} \\
			A_{3} \\
			\end{pmatrix},\\
&r\begin{pmatrix}
			C & E_{1} & E_{2} & E_{3}D_{3} \\
			F_{3} & 0 & 0 & B_{3} \\
			C_{1}F_{1} & A_{1} & 0 & 0 \\
			C_{2}F_{2} & 0 & A_{2} & 0 \\
			\end{pmatrix}=r\begin{pmatrix}
			E_{1} & E_{2} \\
			A_{1} & 0 \\
			0 & A_{2} \\
			\end{pmatrix}+r(F_{3},\ B_{3}),
\end{align*}
			\begin{align*}
			&r\begin{pmatrix}
			C & E_{1} & E_{3}D_{3} & E_{2}D_{2} \\
			F_{3} & 0 & B_{3} & 0 \\
			F_{2} & 0 & 0 & B_{2} \\
			C_{1}F_{1} & A_{1} & 0 & 0 \\
			\end{pmatrix}=r\begin{pmatrix}
			F_{3} & B_{3} & 0 \\
			F_{2} & 0 & B_{2} \\
			\end{pmatrix}+r\begin{pmatrix}
			E_{1} \\
			A_{1} \\
			\end{pmatrix},\\
&r\begin{pmatrix}
			C & E_{2} & E_{1}D_{1} & E_{3}D_{3} \\
			F_{1} & 0 & B_{1} & 0 \\
			F_{3} & 0 & 0 & B_{3} \\
			C_{2}F_{2} & A_{2} & 0 & 0 \\
			\end{pmatrix}=r\begin{pmatrix}
			F_{1} & B_{1} & 0 \\
			F_{3} & 0 & B_{3} \\
			\end{pmatrix}+r\begin{pmatrix}
			E_{2} \\
			A_{2} \\
			\end{pmatrix},\\
			&r\begin{pmatrix}
			C & E_{1}D_{1} & E_{2}D_{2} & E_{3}D_{3} \\
			F_{1} & B_{1} & 0 & 0 \\
			F_{2} & 0 & B_{2} & 0 \\
			F_{3} & 0 & 0 & B_{3} \\
			\end{pmatrix}=r\begin{pmatrix}
			F_{1} & B_{1} & 0 & 0 \\
			F_{2} & 0 & B_{2} & 0 \\
			F_{3} & 0 & 0 & B_{3} \\
			\end{pmatrix},\\
			&r\begin{pmatrix}
			C & 0 & E_{1} & 0 & E_{3} & E_{2}D_{2} & 0 & E_{3}D_{3} \\
			0 & -C & 0 & E_{2} & E_{3} & 0 & -E_{1}D_{1} & 0 \\
			F_{2} & 0 & 0 & 0 & 0 & B_{2} & 0 & 0 \\
			0 & F_{1} & 0 & 0 & 0 & 0 & B_{1} & 0 \\
			F_{3} & F_{3} & 0 & 0 & 0 & 0 & 0 & B_{3} \\
			C_{1}F_{1} & 0 & A_{1} & 0 & 0 & 0 & 0 & 0 \\
			0 & -C_{2}F_{2} & 0 & A_{2} & 0 & 0 & 0 & 0 \\
			0 & -C_{3}F_{3} & 0 & 0 & A_{3} & 0 & 0 & 0 \\
			\end{pmatrix}\\
			&=r\begin{pmatrix}
			F_{2} & 0 & B_{2} & 0 & 0 \\
			0 & F_{1} & 0 & B_{1} & 0 \\
			F_{3} & F_{3} & 0 & 0 & B_{3} \\
			\end{pmatrix}+r\begin{pmatrix}
			E_{1} &0&E_{3}\\
			0 & E_2&E_3\\
			A_1& 0&0\\
			0&A_2&0\\
			0&0&A_3
			\end{pmatrix}.
\end{align*}
In this case, the general solution to system \eqref{eq7-1} is
			\begin{align*}
			&X=A_{1}^{\dagger}C_{1}+L_{A_{1}}D_{1}B_{1}^{\dagger}+L_{A_{1}}U_{1}R_{B_{1}},\ Y=A_{2}^{\dagger}C_{2}+L_{A_{2}}D_{2}B_{2}^{\dagger}+L_{A_{2}}U_{2}R_{B_{2}},\\ &Z=A_{3}^{\dagger}C_{3}+L_{A_{3}}D_{3}B_{3}^{\dagger}+L_{A_{3}}U_{3}R_{B_{3}},
			\end{align*}
			where
			\begin{align*}
&U_{1}=A_{11}^{\dagger}TB_{11}^{\dagger}-A_{11}^{\dagger}A_{22}M_{1}^{\dagger}TB_{11}^{\dagger}-A_{11}^{\dagger}S_{1}A_{22}^{\dagger}TN_{1}^{\dagger}B_{22}B_{11}^{\dagger}-A_{11}^{\dagger}S_{1}U_{4}R_{N_{1}}B_{22}B_{11}^{\dagger}+L_{A_{11}}U_{5}+U_{6}R_{B_{11}},\\
			&U_{2}=M_{1}^{\dagger}TB_{22}^{\dagger}+S_{1}^{\dagger}S_{1}A_{22}^{\dagger}TN_{1}^{\dagger}+L_{M_{1}}L_{S_{1}}U_{7}+U_{8}R_{B_{22}}+L_{M_{1}}U_{4}R_{N_{1}},\\
			&U_{3}=F_{10}+L_{G_{2}}V_{1}+V_{2}R_{H_{1}}+L_{G_{1}}V_{3}R_{H_{2}},\\
			& or \\ &U_{3}=F_{20}-L_{G_{4}}W_{1}-W_{2}R_{H_{3}}-L_{G_{3}}W_{3}R_{H_{4}},\\
			&V_{1}=(I_{m},\ 0)\left[C_{11}^{\dagger}(F-C_{22}V_{3}D_{22}-C_{33}W_{3}D_{33})\right]-(I_{m},\ 0)\left[C_{11}^{\dagger}U_{11}D_{11}-L_{C_{11}}U_{12}\right],\\
				\end{align*}
	\begin{align*}
&W_{1}=(0,\ I_{m})\left[C_{11}^{\dagger}(F-C_{22}V_{3}D_{22}-C_{33}W_{3}D_{33})\right]\\
			&-(0,\ I_{m})\left[C_{11}^{\dagger}U_{11}D_{11}-L_{C_{11}}U_{12}\right],\\
			&W_{2}=\left[R_{C_{11}}(F-C_{22}V_{3}D_{22}-C_{33}W_{3}D_{33})D_{11}^{\dagger}\right]\left(
			\begin{array}{c}
			0 \\
			I_{n} \\
			\end{array}
			\right)\\
			&+\left[C_{11}C_{11}^{\dagger}U_{11}+U_{21}R_{D_{11}}\right]\left(
			\begin{array}{c}
			0 \\
			I_{n} \\
			\end{array}
			\right),\\
			&V_{2}=\left[R_{C_{11}}(F-C_{22}V_{3}D_{22}-C_{33}W_{3}D_{33})D_{11}^{\dagger}\right]
			\left(                                                                                                                                                                                                  \begin{array}{c}
			I_{n} \\
			0 \\
			\end{array}
			\right)\\
			&+\left[C_{11}C_{11}^{\dagger}U_{11}+U_{21}R_{D_{11}}\right]
			\left(                                                                                                                                                                                                  \begin{array}{c}
			I_{n} \\
			0 \\
			\end{array}
			\right),\\
			&V_{3}=E_{11}^{\dagger}FE_{33}^{\dagger}-E_{11}^{\dagger}E_{22}M^{\dagger}FE_{33}^{\dagger}-E_{11}^{\dagger}SE_{22}^{\dagger}FN^{\dagger}E_{44}E_{33}^{\dagger}\\
			&-E_{11}^{\dagger}SU_{31}R_{N}E_{44}E_{33}^{\dagger}+L_{E_{11}}U_{32}+U_{33}R_{E_{33}},\\
			&W_{3}=M^{\dagger}FE_{44}^{\dagger}+S^{\dagger}SE_{22}^{\dagger}FN^{\dagger}+L_{M}L_{S}U_{41}+L_{M}U_{31}R_{N}\\
			&+U_{42}R_{E_{44}},
			\end{align*}
			where $T=T_{1}-A_{33}U_{3}B_{33}$, $U_{j} (j=\overline{4,6})$, $U_{i1} (i=\overline{1,4})$, $U_{12}$, $U_{32}$, $U_{33}$ and $U_{42}$ are arbitrary matrices with appropriate shapes over $\mathbb{H}$. $m$ is the column number of $A_3$ and $n$ is the row number of $B_3$.
		\end{theorem}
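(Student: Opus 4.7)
The plan is to mirror the argument of Theorem~\ref{the1}, exploiting the fact that system \eqref{eq7-1} is obtained from \eqref{eq1} by deleting the variables $U,V$ and the summand $E_1U+VF_1$ in the combined equation. Rather than invoking Theorem~\ref{the1} by forcing $A_1,B_1,C_1,D_1,E_1,F_1$ to take trivial values (which would require careful bookkeeping of vacuous hypotheses and an index shift), I will redo the argument directly in this lower dimensional setting; the ingredients are identical, only there are three free parameters instead of five.

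First I will apply Lemma~\ref{lem2.2} to each of the three pairs $A_iW_i=C_i,\ W_iB_i=D_i$ with $(W_1,W_2,W_3)=(X,Y,Z)$. This shows that the first six equations of \eqref{eq7-1} are jointly solvable precisely when \eqref{eqnew1} holds together with $R_{A_i}C_i=0$ and $D_iL_{B_i}=0$ for $i=\overline{1,3}$, and in that case the general solution has the displayed parametric form with three arbitrary matrices $U_1,U_2,U_3$. Substituting these expressions into $E_1XF_1+E_2YF_2+E_3ZF_3=C$ and collecting terms according to the definitions $A_{ii}=E_iL_{A_i}$, $B_{ii}=R_{B_i}F_i$ and $T_1$ collapses the combined equation to the reduced Sylvester-type equation
\begin{equation*}
A_{11}U_1B_{11}+A_{22}U_2B_{22}+A_{33}U_3B_{33}=T_1.
\end{equation*}

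Next I will feed this reduced equation into Lemma~\ref{lem2.3}, taking the special case in which the two one-sided summands $A_1X_1+X_2B_1$ of \eqref{eq5} vanish (by zeroing those coefficients). The lemma then delivers exactly the consistency conditions $R_{G_j}L_j=0$, $L_jL_{H_j}=0$ for $j=\overline{1,4}$ and $R_{E_{22}}EL_{E_{33}}=0$ together with the closed-form expressions for $U_1,U_2,U_3$ in terms of the auxiliary matrices defined just before the theorem. Combined with the previous step this establishes $(1)\Leftrightarrow(2)$ and yields the general solution formula.

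Finally, to obtain the rank formulation $(3)$, I will apply Lemma~\ref{lem2} to translate each M-P inverse equation in $(2)$ into an equivalent rank equality, exactly as in the $(2)\Leftrightarrow(3)$ step of the proof of Theorem~\ref{the1}. The conditions $R_{A_i}C_i=0$ and $D_iL_{B_i}=0$ immediately give the first line of $(3)$, while the nine conditions coming from Lemma~\ref{lem2.3} produce the nine displayed rank identities after substituting the definition of $T_1$ and carrying out block row and column operations that bring the original coefficients $A_i,B_i,C_i,D_i,E_i,F_i$ back into view. The main obstacle will be the last and largest rank identity, which encodes the compatibility condition $R_{E_{22}}EL_{E_{33}}=0$ between the two candidate representations of the free parameter inside Lemma~\ref{lem2.3}; the block manipulation there is long but mechanical once one follows the template established in the proof of Theorem~\ref{the1}.
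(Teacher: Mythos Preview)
Your proposal is correct and follows essentially the same path as the paper. The paper's proof is a one-line specialization: it simply states that Theorem~\ref{them3.2} follows from Theorem~\ref{the1} by letting $A_1,B_1,C_1,D_1,E_1,F_1$ vanish (which removes the $U,V$ terms and shifts indices), whereas you have chosen to write out that specialized argument explicitly using Lemmas~\ref{lem2.2}, \ref{lem2.3} and \ref{lem2}; the mathematical content is identical.
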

	\begin{proof}
		It follows from Theorem \ref{the1} that this holds when $A_1, C_1, B_1, D_1, E_1, F_1$ vanish in Theorem \ref{the1}
	\end{proof}
		Lettiing $A_i,\ B_i,\ C_i,\ D_i (i=\overline{1,3})$, $E_3$ and $F_3$ vanish in Theorem 3.1, it yields to the following result:
		\begin{corollary}\label{cor4.2} Let $C_{3}$, $D_{3}$, $C_{4}$, $D_{4}$ and $E_{1}$ be given matrices with adequate shapes.
			Let $M_{1}=R_{C_{3}}C_{4}, N_{1}=D_{4}L_{D_{3}}, S_{1}=C_{4}L_{M_{1}}$. Then the matrix equation \eqref{eq3} is consistent if and only if the following rank equalities hold:
\begin{small}
			\begin{align*}
			&r(C_{3}\ E_{1}\ C_{4})=r(C_{3}\ C_{4}), r\left(
			\begin{array}{c}
			D_{3} \\
			E_{1} \\
			D_{4} \\
			\end{array}\right)=r\left(
			\begin{array}{c}
			D_{3} \\
			D_{4} \\
			\end{array}
			\right),\\                                      &r
			\begin{pmatrix}
			C_{3} & E_{1} \\
			0 & D_{4} \\
			\end{pmatrix}=r(C_{3})+r(D_{4}),\ r
			\begin{pmatrix}
			D_{3} & 0 \\
			E_{1} & C_{4} \\
			\end{pmatrix}=r(D_{3})+r(C_{4}).
			\end{align*}
			\end{small}
			In this case, the general solution to equation \eqref{eq3} can be expressed as	
\begin{align*}
			&X_{3}=C_{3}^{\dagger}E_{1}D_{3}^{\dagger}-C_{3}^{\dagger}C_{4}M_{1}^{\dagger}E_{1}D_{3}^{\dagger}-C_{3}^{\dagger}S_{1}C_{4}^{\dagger}E_{1}N_{1}^{\dagger}D_{4}D_{3}^{\dagger}\\
			&-C_{3}^{\dagger}S_{1}Y_{11}R_{N_{1}}D_{4}D_{4}^{\dagger}+L_{C_{3}}Y_{12}+Y_{13}R_{D_{3}},\\
			&X_{4}=M_{1}^{\dagger}E_{1}D_{4}^{\dagger}+S_{1}^{\dagger}S_{1}C_{4}^{\dagger}E_{1}N_{1}^{\dagger}+L_{M_{1}}L_{S_{1}}Y_{14}+Y_{15}R_{D_{4}}\\
			&+L_{M_{1}}Y_{11}R_{N_{1}},
			\end{align*}
\end{corollary}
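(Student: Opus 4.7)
Corollary \ref{cor4.2} addresses the classical two-term Sylvester matrix equation \eqref{eq3}, $C_3X_3D_3+C_4X_4D_4=E_1$, so the natural route is to extract it as a degeneration of the richer machinery already established. The plan is to view \eqref{eq3} as the special case of Theorem \ref{the1} in which every unknown other than $X_3$ and $X_4$ has been removed and in which no one-sided constraints are imposed.

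Concretely, I would first set $A_i=0$, $B_i=0$, $C_i=0$, $D_i=0$ for $i=1,2,3$ together with $E_3=0$ and $F_3=0$, as the passage preceding Corollary \ref{cor4.2} instructs; to complete the reduction to exactly \eqref{eq3} I would in addition take $A_4=B_4=C_4=D_4=0$ (killing the $Z$ side constraints) and $E_1=F_1=0$ (killing the $U,V$ coupling contributions) inside Theorem \ref{the1}. What remains of \eqref{eq1} is precisely
\[
E_2XF_2+E_4ZF_4=C_c,
\]
which, under the renaming $E_2\leftrightarrow C_3$, $F_2\leftrightarrow D_3$, $X\leftrightarrow X_3$, $E_4\leftrightarrow C_4$, $F_4\leftrightarrow D_4$, $Z\leftrightarrow X_4$, $C_c\leftrightarrow E_1$, becomes exactly \eqref{eq3}.

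Next I would trace this specialization through the auxiliary definitions \eqref{3.1}--\eqref{3.4}. With $A_1=A_2=A_3=0$ (so $L_{A_i}=I$) and $B_1=B_2=B_3=0$ (so $R_{B_i}=I$), the block quantities collapse: $A_{ii}=E_i$, $B_{ii}=F_i$, $T_1=C_c$; because $E_3=F_3=0$ we have $A_{33}=B_{33}=0$; and because $E_1=F_1=0$ together with $A_4=B_4=0$ the blocks $A_{11}$ and $B_{11}$ and the associated $A_{44}$ data collapse as well. The residual equation \eqref{eq319} then reduces to the two-term Baksalary--Kala equation $A_{22}U_1B_{22}+A_{44}U_3B_{44}=T_1$, which is itself the case $A_1=B_1=A_4=B_4=0$ of \eqref{eq5} and is therefore covered by Lemma \ref{lem2.3}. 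Under the identifications above the quantities $M_1$, $N_1$, $S_1$ appearing in Lemma \ref{lem2.3} become $R_{C_3}C_4$, $D_4L_{D_3}$, $C_4L_{M_1}$, matching the corollary's definitions exactly.

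The rest is bookkeeping. I would verify that all but four of the nine rank equalities in the consistency statement of Lemma \ref{lem2.3} reduce to trivial identities $r(\cdot)=r(\cdot)$ once three of the five terms vanish, while the remaining four reduce, via Lemma \ref{lem2} and the block-diagonal rank identity $r\begin{pmatrix}M&0\\0&N\end{pmatrix}=r(M)+r(N)$, to exactly the four rank conditions displayed in Corollary \ref{cor4.2}. Simultaneously the formulas for $Y_1$ and $Y_2$ in Lemma \ref{lem2.3} reduce to the stated expressions for $X_3$ and $X_4$. The only real hurdle is the notational clash, since the symbols $C_3,C_4,D_3,D_4,E_1$ play different roles in Theorem \ref{the1} and in the corollary; I would handle this via an intermediate renaming before reading off the final expressions.
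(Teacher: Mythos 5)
Your proposal is correct and follows the same route as the paper, which simply declares Corollary \ref{cor4.2} to be the specialization of Theorem \ref{the1} in which ``$A_i, B_i, C_i, D_i$ $(i=\overline{1,3})$, $E_3$ and $F_3$ vanish.'' In fact your version is more accurate than the paper's: the paper's stated list of vanishing matrices does \emph{not} reduce \eqref{eq1} to \eqref{eq3} (it leaves the constraints $A_4Z=C_4$, $ZB_4=D_4$ and the terms $E_1U+VF_1$ in place), so the extra vanishings you impose ($A_4=B_4=C_4=D_4=0$ and $E_1=F_1=0$) are genuinely needed, not optional pedantry. One point worth making explicit in a write-up: to recover the displayed $M_1=R_{C_3}C_4$, $N_1=D_4L_{D_3}$, $S_1=C_4L_{M_1}$ and the stated parametrization of $X_3,X_4$, you must (as you do) apply Lemma \ref{lem2.3} afresh to the surviving two-term equation with its unknowns placed in the $Y_1,Y_2$ slots of \eqref{eq5}; a literal read-off of the $U_1,U_3$ formulas of Theorem \ref{the1} under your specialization (where the surviving terms sit in the third and fifth slots, so that Theorem \ref{the1}'s $M_1,N_1,S_1$ all degenerate to zero) produces an equivalent but differently shaped solution formula. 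With that caveat handled, the bookkeeping you describe for the four rank conditions goes through exactly as claimed.
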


where $Y_{1i} (i=\overline{1,5})$ are any matrices with appropriate sizes over $\mathbb{H}$.
	
		\noindent{\bf Remark 3.3.}\quad The above corollary has the main findings of \cite{J.K. 1980}.
		
		Lettiing $A_3,\ B_3,\ C_3,\ D_3$, $E_3$  and $F_3$ vanish in Theorem 3.1, it yields to the following result:
		\begin{corollary}\label{cor4.3}Let $ A_{i}, B_{i}, C_{i}(i=\overline{1,4})$ and $C_{c}$ be given with appropriate sizes over $\mathbb{H}$. Set
			\begin{align*}
			&A=A_{3}L_{A_{1}},\ B=R_{B_{1}}B_{3},\ C=A_{4}L_{A_{2}},\ D=R_{B_{2}}B_{4},\\
			& M=R_{A}C,\ N=DL_{B},\ S=CL_{M},\ E=C_{c}-A_{3}A_{1}^{\dagger}C_{1}B_{3}\\
			&-AC_{2}B_{1}^{\dagger}B_{3}-A_{4}A_{2}^{\dagger}C_{3}B_{4}-CC_{4}B_{2}^{\dagger}B_{4}.
			\end{align*}
			Then the following statements are equivalent:
			
			$\mathrm{(1)}$  the system of matrix equations \eqref{eq4} is solvable.
			
			$\mathrm{(2)}$
			\begin{align*}
			&A_{1}C_{2}=C_{1}B_{1},\ A_{2}C_{4}=C_{3}B_{2},\ R_{A_{1}}C_{1}=0,\\
			& R_{A_{2}}C_{3}=0,\ C_{2}L_{B_{1}}=0,\ C_{4}L_{B_{2}}=0,\\
			&R_{M}R_{A}E=0,\ R_{A}EL_{D}=0,\ EL_{B}L_{N}=0,\ R_{C}EL_{B}=0.
			\end{align*}
			
			$\mathrm{(3)}$
			\begin{align*}
			& A_{1}C_{2}=C_{1}B_{1},\ A_{2}C_{4}=C_{3}B_{2},\ r(A_{1},\ C_{1})=r(A_{1}),\\
			& r(A_{2},\ C_{3})=r(A_{2}),\ r\left(
			\begin{array}{c}
			C_{2} \\
			B_{1} \\
			\end{array}
			\right)=r(B_{1}),\ r\left(
			\begin{array}{c}
			C_{4} \\
			B_{2} \\
			\end{array}
			\right)=r(B_{2}),\\
			& r\left(
			\begin{array}{ccc}
			A_{1} & 0 & C_{1}B_{3} \\
			A_{3} & A_{4}C_{4} & C_{c} \\
			0 & B_{2} & B_{4} \\
			\end{array}
			\right)=r\left(
			\begin{array}{ccc}
			A_{1} & 0 & 0 \\
			A_{3} & 0 & 0 \\
			0 & B_{2} & B_{4} \\
			\end{array}
			\right),\\
			&r\left(
			\begin{array}{ccc}
			A_{2} & 0 & C_{3}B_{4} \\
			A_{4} & A_{3}C_{2} & C_{c} \\
			0 & B_{1} & B_{3} \\
			\end{array}
			\right)=r\left(
			\begin{array}{ccc}
			A_{2} & 0 & 0 \\
			A_{4} & 0 & 0 \\
			0 & B_{1} & B_{3} \\
			\end{array}
			\right),\\
			&r\left(
			\begin{array}{ccc}
			B_{1} & 0 & B_{3} \\
			0 & B_{2} & B_{4} \\
			A_{3}C_{2} & A_{4}C_{4} & C_{c} \\
			\end{array}
			\right)=r\left(
			\begin{array}{ccc}
			B_{1} & 0 & B_{3} \\
			0 & B_{2} & B_{4} \\
			\end{array}
			\right),\\
			&r\left(
			\begin{array}{ccc}
			C_{1}B_{3} & A_{1} & 0 \\
			C_{3}B_{4} & 0 & A_{2} \\
			C_{c} & A_{3} & A_{4} \\
			\end{array}
			\right)=r\left(
			\begin{array}{cc}
			A_{1} & 0 \\
			0 & A_{2} \\
			A_{3} & A_{4} \\
			\end{array}
			\right).
			\end{align*}
In this case, the general solution to the system  \eqref{eq4} can be expressed as
			\begin{align*}
			&X_{1}=A_{1}^{\dagger}C_{1}+L_{A_{1}}C_{2}B_{1}^{\dagger}+L_{A_{1}}A^{\dagger}EB^{\dagger}R_{B_{1}}-L_{A_{1}}A^{\dagger}CM^{\dagger}R_{A}EB^{\dagger}R_{B_{1}}-L_{A_{1}}A^{\dagger}SC^{\dagger}EL_{B}N^{\dagger}DB^{\dagger}R_{B_{1}}\\
			&-L_{A_{1}}A^{\dagger}SVR_{N}DB^{\dagger}R_{B_{1}}+L_{A_{1}}(L_{A}U+ZR_{B})R_{B_{1}},\\
			&X_{2}=A_{2}^{\dagger}C_{3}+L_{A_{2}}C_{4}B_{2}^{\dagger}+L_{A_{2}}M^{\dagger}R_{A}ED^{\dagger}R_{B_{2}}+L_{A}L_{M}S^{\dagger}SC^{\dagger}EL_{B}N^{\dagger}R_{B_{2}}\\
			&+L_{A_{2}}L_{M}(V-S^{\dagger}SVNN^{\dagger})R_{B_{2}}+L_{A_{2}}WR_{D}R_{B_{2}},
			\end{align*}
			where $U, V, W$ and $Z$ are arbitrary matrices with appropriate sizes over $\mathbb{H}$.
	\end{corollary}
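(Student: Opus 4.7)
The plan is to derive Corollary \ref{cor4.3} as a direct specialization of Theorem \ref{the1}. To recover system \eqref{eq4} from \eqref{eq1} I would set, in the notation of Theorem \ref{the1}, the coefficient blocks $A_{1},B_{1},C_{1},D_{1},E_{1},F_{1}$ to zero, so that the unknowns $U$ and $V$ trivialize and drop out of the bilinear equation, and in addition set $A_{3},B_{3},C_{3},D_{3},E_{3},F_{3}$ to zero so that $Y$ similarly trivializes. After relabeling Theorem's $(A_{2},B_{2},C_{2},D_{2},E_{2},F_{2},X)$ as the Corollary's $(A_{1},B_{1},C_{1},C_{2},A_{3},B_{3},X)$ and Theorem's $(A_{4},B_{4},C_{4},D_{4},E_{4},F_{4},Z)$ as the Corollary's $(A_{2},B_{2},C_{3},C_{4},A_{4},B_{4},Y)$, system \eqref{eq1} reduces exactly to \eqref{eq4}.

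Next I would track how the notation of \eqref{3.1}--\eqref{3.4} collapses. The vanishings force $A_{11}=B_{11}=0$ and $A_{13}=B_{31}=0$, so the projectors $R_{A_{11}}$ and $L_{B_{11}}$ become identities. The surviving intermediate matrices reduce to those of the corollary: $A_{12}\mapsto A_{3}L_{A_{1}}=A$, $A_{14}\mapsto A_{4}L_{A_{2}}=C$, $B_{21}\mapsto R_{B_{1}}B_{3}=B$, $B_{41}\mapsto R_{B_{2}}B_{4}=D$, and consequently $M_{1}\mapsto R_{A}C=M$, $N_{1}\mapsto DL_{B}=N$, $S_{1}\mapsto CL_{M}=S$. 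A direct computation shows that $T_{1}$ collapses to $C_c-A_{3}A_{1}^{\dagger}C_{1}B_{3}-AC_{2}B_{1}^{\dagger}B_{3}-A_{4}A_{2}^{\dagger}C_{3}B_{4}-CC_{4}B_{2}^{\dagger}B_{4}=E$, matching the corollary.

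Then I would invoke the equivalences (1)$\Leftrightarrow$(2)$\Leftrightarrow$(3) of Theorem \ref{the1}. The consistency conditions \eqref{eqnew} and \eqref{eqnewcondition} specialize to the two-sided compatibility conditions for the individual equations on $X$ and $Y$ together with $R_{M}R_{A}E=0$, $R_{A}EL_{D}=0$, $EL_{B}L_{N}=0$, and $R_{C}EL_{B}=0$, which is exactly item (2) of the corollary. Among the large rank equalities \eqref{3.7}--\eqref{3.15}, those whose nontrivial blocks involve only vanished coefficients collapse to tautologies, while the remaining ones reduce via repeated use of Lemma \ref{lem2} to the nine substantive rank equalities listed in item (3). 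Finally, substituting the vanishings into \eqref{solution} and exploiting $L_{A_{11}}=R_{B_{11}}=I$ and $A_{33}=0$ yields the stated closed-form expressions for $X_{1}$ and $X_{2}$.

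The main obstacle is entirely bookkeeping rather than mathematics: there are roughly twenty intermediate quantities in \eqref{3.1}--\eqref{3.4} to unwind and nine rank equalities to reduce, and I must check meticulously that each rank condition of Theorem \ref{the1} either becomes trivial or matches exactly one of the corollary's nine. No new tools beyond Lemma \ref{lem2}, Lemmas \ref{lemma2.2}--\ref{lem2.2}, and Theorem \ref{the1} itself are needed.
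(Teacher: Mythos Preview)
Your proposal is correct and follows essentially the same route as the paper: the paper derives Corollary \ref{cor4.3} simply by letting the appropriate coefficient blocks in Theorem \ref{the1} vanish and reading off the specialization. In fact you are more careful than the paper's one-line derivation, which only mentions vanishing $A_{3},B_{3},C_{3},D_{3},E_{3},F_{3}$; as you correctly note, one must also vanish $A_{1},B_{1},C_{1},D_{1},E_{1},F_{1}$ to eliminate $U$ and $V$, so your bookkeeping is actually a slight improvement on what is written.
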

\noindent{\bf Remark 3.4.}\quad The above corollary has the main findings of \cite{Wang 2009}.
		\section{The general solution to equations \eqref{eq7-2} with $\eta$-Hermicity}
		In this section, as an application of equations \eqref{eq1}, we establish some necessary and sufficient conditions for the system of matrix equations \eqref{eq7-2} to have a solution, and derive a formula for its general solution, where $X, Y, Z$ are $\eta$-Hermitian.
		Let $ A_{i}, B_{i}, E_{i}\ (i=\overline{1,4})$ and $C_c$ be given with appropriate sizes over $\mathbb{H}$. Set
		\begin{align*}
		&E_{i}L_{A_{i}}=A_{ii}(i=\overline{1,4}),\ R_{A_{11}}A_{jj}=A_{1j} (j=\overline{2,4}),\ R_{A_{12}}A_{13}=M_{1},\ S_{1}=A_{22}L_{M_{1}},\\
		 &T_{1}=C_c-E_1A_{1}^{\dagger}C_1-C_1^{\eta^{\ast}}(A_{1}^{\eta^{\ast}})^{\dagger}E_{1}^{\eta^{\ast}}-\left[\sum_{i=1}^{3}E_{i}\left(A_{i}^{\dagger}B_{i}+L_{A_{i}}B_{i}^{\eta^{\ast}}(A_{i}^{\eta^{\ast}})^{\dagger}\right)E_{i}^{\eta^{\ast}}\right],\\
		 &T_2=R_{A_{11}}T_1(R_{A_{11}})^{\eta^{\ast}},\ G=R_{M_{1}}R_{A_{12}},\ G_{1}=GA_{14},\ G_{2}=R_{A_{12}}A_{14},\ G_{3}=R_{A_{13}}A_{14},\\ &G_{4}=A_{14},\ L_{1}=GT_{2},\ L_{2}=R_{A_{12}}T_{2}(R_{A_{13}})^{\eta^{\ast}},\\
		&L_{3}=R_{A_{13}}T_{2}(R_{A_{12}})^{\eta^{\ast}},\ L_{4}=T_{2}G^{\eta^{\ast}},\ C_{11}=(L_{G_{2}},\ L_{G_{4}}),\ E_{11}=R_{C_{11}}C_{22},\ C_{22}=L_{G_{1}},\ C_{33}=L_{G_{3}},\\
		&E_{22}=R_{C_{11}}C_{33},\ M=R_{E_{11}}E_{22},\ N=(R_{E_{22}}E_{11})^{\eta^{\ast}},\ F=F_{44}-F_{33},\ E=R_{C_{11}}F(R_{C_{11}})^{\eta^{\ast}},\ S=E_{22}L_{M},\\ &F_{11}=G_{2}L_{G_{1}},\ G_{1}=L_{2}-G_{2}G_{1}^{\dagger}L_{1}(G_{4}^{\eta^{\ast}})^{\dagger}G_{3}^{\eta^{\ast}},\ F_{22}=G_{4}L_{G_{3}},\ G_{2}=L_{4}-G_{4}G_{3}^{\dagger}L_{3}(G_{2}^{\eta^{\ast}})^{\dagger}G_{1}^{\eta^{\ast}},\\
		&F_{1}=G_{1}^{\dagger}G_{1}(G_{4}^{\eta^{\ast}})^{\dagger}+L_{G_{1}}G_{2}^{\dagger}L_{2}(G_{3}^{\eta^{\ast}})^{\dagger},\ F_{2}=G_{3}^{\dagger}L_{3}(G_{2}^{\eta^{\ast}})^{\dagger}+L_{G_{3}}G_{4}^{\dagger}L_{4}(G_{1}^{\eta^{\ast}})^{\dagger}.
		\end{align*}
		Then we have the following theorem.
	\begin{theorem}\label{them4.1} Consider \eqref{eq7-2}. The following statements are equivalent:\\
		$\mathrm{(1)}$ The system of matrix equations \eqref{eq7-2} has a solution.\\
		$\mathrm{(2)}$
		\begin{align*}
		&R_{E_{22}}E(R_{E_{22}})^{\eta^{\ast}}=0,\ R_{A_{i}}B_{i}=0,\ R_{G_{i}}L_{i}=0\ (i=\overline{1,4}).
		\end{align*}	
$\mathrm{(3)}$
		\begin{align*}
		&r(B_i,\ A_i)=r(A_i)\ (i=\overline{1,4}),\\
		&r\begin{pmatrix}
		C_c & E_{1} & E_{2} & E_{3} & E_4 & (C_1)^{\eta^{\ast}}\\
		(E_1)^{\eta^{\ast}} &0 &0 &0 &0 & (A_1)^{\eta^{\ast}}\\
		C_1& A_1& 0& 0&0 &0\\
		C_{2}E_{2}^{\eta^{\ast}} & 0 & A_{2} & 0 & 0& 0\\
		C_{3}E_{3}^{\eta^{\ast}} & 0 & 0 & A_3 & 0& 0\\
		C_{4}E_{4}^{\eta^{\ast}} & 0 & 0 & 0 & A_{4} & 0\\
		\end{pmatrix}=r\begin{pmatrix}
		E_{1} & E_{2} & E_{3} & E_4\\
		A_{1} & 0 & 0 &0\\
		0 & A_{2} & 0 &0\\
		0 & 0 & A_{3} &0\\
		0 & 0 & 0 &A_{4}\\
		\end{pmatrix}+r\begin{pmatrix}
		E_1\\
		A_1
		\end{pmatrix},\\
		&r\begin{pmatrix}
		C_c & E_{4} & E_{2} & E_{1} & (C_3)^{\eta^{\ast}} & (C_1)^{\eta^{\ast}}\\
		(E_3)^{\eta^{\ast}} &0 &0 &0 &(A_3)^{\eta^{\ast}} & 0\\
		(E_1)^{\eta^{\ast}}& 0 & 0& 0&0 &(A_1)^{\eta^{\ast}}\\
		C_{4}E_{4}^{\eta^{\ast}} & A_{4} & 0 & 0 & 0& 0\\
		C_{2}E_{2}^{\eta^{\ast}} & 0 & A_2 & 0 & 0& 0\\
		C_{1} & 0 & 0 & A_{1} & 0 & 0\\
		\end{pmatrix}=r\begin{pmatrix}
		E_{4} & E_{2} & E_1\\
		A_{4} & 0 & 0 \\
		0 & A_{2} & 0 \\
		0 & 0 & A_{1} \\
		\end{pmatrix}+r\begin{pmatrix}
		E_3& E_1\\
		A_3 & 0\\
		0& A_1
		\end{pmatrix},
\end{align*}
\begin{align*}
&r\begin{pmatrix}
		C_c & E_{4} & E_{3} & E_{1} & (C_2)^{\eta^{\ast}} & (C_1)^{\eta^{\ast}}\\
		(E_2)^{\eta^{\ast}} &0 &0 &0 &(A_2)^{\eta^{\ast}} & 0\\
		(E_1)^{\eta^{\ast}}& 0 & 0& 0&0 &(A_1)^{\eta^{\ast}}\\
		C_{4}E_{4}^{\eta^{\ast}} & A_{4} & 0 & 0 & 0& 0\\
		C_{3}E_{3}^{\eta^{\ast}} & 0 & A_3 & 0 & 0& 0\\
		C_{1} & 0 & 0 & A_{1} & 0 & 0\\
		\end{pmatrix}=r\begin{pmatrix}
		E_{4} & E_{3} & E_1\\
		A_{4} & 0 & 0 \\
		0 & A_{3} & 0 \\
		0 & 0 & A_{1} \\
		\end{pmatrix}+r\begin{pmatrix}
		E_2& E_1\\
		A_2 & 0\\
		0& A_1
		\end{pmatrix},\\
		&r\begin{pmatrix}
		C_c & E_{4} & E_{1} & (C_3)^{\eta^{\ast}}& (C_2)^{\eta^{\ast}} & (C_1)^{\eta^{\ast}}\\
		(E_3)^{\eta^{\ast}} &0 &0 &(A_3)^{\eta^{\ast}} &0 & 0\\
		(E_2)^{\eta^{\ast}}& 0 & 0& 0&(A_2)^{\eta^{\ast}} &0\\
		(E_1)^{\eta^{\ast}}& 0 & 0& 0&0 &(A_2)^{\eta^{\ast}}\\
		C_{4}E_{4}^{\eta^{\ast}} & A_{4} & 0 & 0 & 0& 0\\
		C_{1} & 0 & 0 & A_{1} & 0 & 0\\
		\end{pmatrix}=r\begin{pmatrix}
		E_{3} & E_{2} & E_1\\
		A_{3} & 0 & 0 \\
		0 & A_{2} & 0 \\
		0 & 0 & A_{1} \\
		\end{pmatrix}+r\begin{pmatrix}
		E_4& E_1\\
		A_4 & 0\\
		0& A_1
		\end{pmatrix},\\
		&r\left(
		\begin{array}{cccccccccccc}
		C_c & E_2 & E_{1} & 0 & 0&0& E_{4} & P_{3} & C_1^{\eta^{\ast}}& 0 &0& P_{4} \\
		E_3^{\eta^{\ast}} & 0 & 0 & 0 & 0&0& 0 & A_3^{\eta^{\ast}} & 0& 0 &0& 0 \\
		E_1^{\eta^{\ast}} & 0 & 0 & 0 & 0&0& 0 & 0 & A_1^{\eta^{\ast}}& 0 &0& 0 \\
		0&0&0 & C_c & E_3 & E_{1} & E_{4} & 0&  0 & P_{2} & C_1^{\eta^{\ast}}& 0 \\
		0 & 0 & 0 & E_2^{\eta^{\ast}} & 0&0& 0 & 0 & 0& A_2^{\eta^{\ast}} &0& 0 \\
		0 & 0 & 0 & E_1^{\eta^{\ast}} & 0&0& 0 & 0 & 0& 0 &A_1^{\eta^{\ast}}& 0 \\
		E_4^{\eta^{\ast}} & 0 & 0 & -E_4^{\eta^{\ast}} & 0&0& 0 & 0 & 0& 0 &0& A_4^{\eta^{\ast}} \\
		P_2 & A_2 & 0 & 0 & 0&0& 0 & 0 & 0& 0 &0& 0 \\
		C_1 & 0 & A_1 & 0 & 0&0& 0 & 0 & 0& 0 &0& 0 \\
		0 & 0 & 0 & P_3 & A_3&0& 0 & 0 & 0& 0 &0& 0 \\
		0 & 0 & 0 & C_1 & 0&A_1& 0 & 0 & 0& 0 &0& 0 \\
		0 & 0 & 0 & P_4 & 0&0& A_4 & 0 & 0& 0 &0& 0 \\
		\end{array}
		\right)\\
		&=2r\begin{pmatrix}
		E_2& E_{1} & 0 &0 & E_{4} \\
		0 & 0& E_{3} & E_{1}& E_4 \\
		A_{2} & 0 & 0 &0&0\\
		0 & A_{1} & 0 &0&0\\
		0 & 0 & A_{3} &0&0\\
		0 & 0 & 0 &A_{1}&0\\
		0 & 0 & 0 &0&A_{4}
		\end{pmatrix},
		\end{align*}
where $P_i=C_iE_{i}^{\eta^{\ast}} (i=\overline{2,4})$. In this case, the general solution to the system \eqref{eq7-2} can be expressed as
\begin{align*}
&U=\frac{U_1+(U_2)^{\eta^{\ast}}}{2},\ X=\frac{\tilde{X}+(\tilde{X})^{\eta^{\ast}}}{2},\\ &Y=\frac{\tilde{Y}+(\tilde{Y})^{\eta^{\ast}}}{2},\ Z=\frac{\tilde{Z}+(\tilde{Z})^{\eta^{\ast}}}{2},\\
&U_1=A_{1}^{\dagger}C_{1}+L_{A_{1}}S_1,\ U_2=U_1^{\eta^{\ast}},\\
\end{align*}
\begin{align*}
&\tilde{X}=A_{1}^{\dagger}B_{1}+L_{A_{1}}B_{1}^{\eta^{\ast}}(A_{1}^{\eta^{\ast}})^{\dagger}+L_{A_{1}}U_{1}(L_{A_{1}})^{\eta^{\ast}},\\
&\tilde{Y}=A_{2}^{\dagger}B_{2}+L_{A_{2}}B_{2}^{\eta^{\ast}}(A_{2}^{\eta^{\ast}})^{\dagger}+L_{A_{2}}U_{2}(L_{A_{2}})^{\eta^{\ast}},\\ &\tilde{Z}=A_{3}^{\dagger}B_{3}+L_{A_{3}}B_{3}^{\eta^{\ast}}(A_{3}^{\eta^{\ast}})^{\dagger}+L_{A_{3}}U_{3}(L_{A_{3}})^{\eta^{\ast}},
\end{align*}
where
	\begin{align*}
	&S_{1}=A_{11}^{\dagger}(T_1-A_{22}XA_{22}^{\eta^{\ast}}-A_{33}YA_{33}^{\eta^{\ast}}-A_{44}ZA_{44}^{\eta^{\ast}})-A_{11}^{\dagger}W_{11}A_{11}^{\eta^{\ast}}+L_{A_{11}}W_{12},\\
	&U_{1}=A_{12}^{\dagger}T(A_{12}^{\dagger})^{\eta^{\ast}}-A_{12}^{\dagger}A_{13}M_{1}^{\dagger}T(A_{12}^{\dagger})^{\eta^{\ast}}-A_{12}^{\dagger}S_1U_{4}R_{M_{1}^{\eta^{\ast}}}A_{13}^{\dagger}(A_{12}^{\dagger})^{\eta^{\ast}}-A_{12}^{\dagger}S_1A_{13}^{\dagger}T(M_1^{\eta^{\ast}})^{\dagger}A_{13}^{\dagger}(A_{12}^{\dagger})^{\eta^{\ast}}\\
	&+L_{A_{12}}U_{5}+U_{6}R_{A_{12}^{\eta^{\ast}}},\\
	&U_{2}=M_{1}^{\dagger}T(A_{13}^{\dagger})^{\eta^{\ast}}+S_{1}^{\dagger}S_{1}A_{13}^{\dagger}T(M_{1}^{\dagger})^{\eta^{\ast}}+L_{M_{1}}L_{S_{1}}U_{7}+U_{8}R_{A_{13}^{\eta^{\ast}}}+L_{M_{1}}U_{4}R_{M_{1}^{\eta^{\ast}}},\\
	&U_{3}=F_{1}+L_{G_{2}}V_{1}+V_{2}R_{G_{4}^{\eta^{\ast}}}+L_{G_{1}}V_{3}R_{G_{3}^{\eta^{\ast}}},\ or\ U_{3}=F_{2}-L_{G_{4}}W_{1}-W_{2}R_{G_{2}^{\eta^{\ast}}}-L_{G_{3}}W_{3}R_{G_{1}^{\eta^{\ast}}},\\
	&V_{1}=(I_{m},\ 0)\left[C_{11}^{\dagger}(F-C_{22}V_{3}C_{33}^{\eta^{\ast}}-C_{33}W_{3}C_{22}^{\eta^{\ast}})\right]-(I_{m},\ 0)\left[C_{11}^{\dagger}U_{11}C_{11}^{\eta^{\ast}}+L_{C_{11}}U_{12}\right],\\
	&W_{1}=(0,\ I_{m})\left[C_{11}^{\dagger}(F-C_{22}V_{3}C_{33}^{\eta^{\ast}}-C_{33}W_{3}C_{22}^{\eta^{\ast}})\right]-(0,\ I_{m})\left[C_{11}^{\dagger}U_{11}C_{11}^{\eta^{\ast}}+L_{C_{11}}U_{12}\right],\\
	&W_{2}=\left[R_{C_{11}}(F-C_{22}V_{3}C_{33}^{\eta^{\ast}}-C_{33}W_{3}C_{22}^{\eta^{\ast}})(C_{11}^{\eta^{\ast}})^{\dagger}\right]\left(
	\begin{array}{c}
	0 \\
	I_{n} \\
	\end{array}
	\right)+\left[C_{11}C_{11}^{\dagger}U_{11}+U_{21}L_{C_{11}}^{\eta^{\ast}}\right]\left(
	\begin{array}{c}
	0 \\
	I_{n} \\
	\end{array}
	\right),\\
	&V_{2}=R_{C_{11}}(F-C_{22}V_{3}C_{33}^{\eta^{\ast}}-C_{33}W_{3}C_{22}^{\eta^{\ast}})(C_{11}^{\eta^{\ast}})^{\dagger}\left(
	\begin{array}{c}
	0 \\
	I_{n} \\
	\end{array}
	\right)+\left[C_{11}C_{11}^{\dagger}U_{11}+U_{21}L_{C_{11}}^{\eta^{\ast}}\right]\left(
	\begin{array}{c}
	I_{n} \\
	0 \\
	\end{array}
	\right),\\
	&V_{3}=E_{11}^{\dagger}F(E_{22}^{\eta^{\ast}})^{\dagger}-E_{11}^{\dagger}E_{22}M^{\dagger}F(E_{22}^{\eta^{\ast}})^{\dagger}-E_{11}^{\dagger}SE_{22}^{\dagger}FN^{\dagger}E_{11}^{\eta^{\ast}}(E_{22}^{\eta^{\ast}})^{\dagger}-E_{11}^{\dagger}SU_{31}R_{N}E_{11}^{\eta^{\ast}}(E_{22}^{\eta^{\ast}})^{\dagger}\\
	&+L_{E_{11}}U_{32}+U_{33}L_{E_{22}}^{\eta^{\ast}},\\
	&W_{3}=M^{\dagger}F(E_{11}^{\eta^{\ast}})^{\dagger}+S^{\dagger}SE_{22}^{\dagger}FN^{\dagger}+L_{M}L_{S}U_{41}+L_{M}U_{31}R_{N}-U_{42}L_{E_{11}}^{\eta^{\ast}},
	\end{align*}
where $T=T_{1}-A_{33}U_{3}(A_{33})^{\eta^{\ast}}$, $U_{j} (j=4,5,6)$, $U_{i1} (i=1,2,3,4)$, $U_{12}$, $U_{32}$, $U_{33}$, and $U_{42}$ are any matrices with suitable dimensions over $\mathbb{H}$.
\end{theorem}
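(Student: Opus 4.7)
My approach is to recognize \eqref{eq7-2} as the $\eta$-Hermitian specialization of \eqref{eq1} obtained by introducing the auxiliary unknown $V$ and setting $V=U^{\eta^{\ast}}$, $F_i=E_i^{\eta^{\ast}}$, $B_i=A_i^{\eta^{\ast}}$, and $D_i=C_i^{\eta^{\ast}}$ for the appropriate indices. Under this identification, each constraint $A_iX=C_i$ together with $X=X^{\eta^{\ast}}$ translates to the two-sided system $A_iX=C_i$, $XB_i=D_i$ of \eqref{eq1}, while $E_1U+(E_1U)^{\eta^{\ast}}+\sum_{i=2}^{4}E_iXE_i^{\eta^{\ast}}=C_c$ becomes precisely the bilinear equation of \eqref{eq1} with $VF_1=U^{\eta^{\ast}}E_1^{\eta^{\ast}}$. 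I would then apply Theorem \ref{the1} to produce a (not yet $\eta$-Hermitian) solution quintuple $(U_1,U_2,\tilde X,\tilde Y,\tilde Z)$ and symmetrize: using the identities $(L_A)^{\eta^{\ast}}=R_{A^{\eta^{\ast}}}$ and $(R_A)^{\eta^{\ast}}=L_{A^{\eta^{\ast}}}$, one checks that the $\eta^{\ast}$-transpose of any solution of the reduced system is again a solution with $U$ and $V$ interchanged, so that the averages $U=\tfrac12(U_1+U_2^{\eta^{\ast}})$, $X=\tfrac12(\tilde X+\tilde X^{\eta^{\ast}})$, $Y=\tfrac12(\tilde Y+\tilde Y^{\eta^{\ast}})$, $Z=\tfrac12(\tilde Z+\tilde Z^{\eta^{\ast}})$ deliver the $\eta$-Hermitian solution displayed in the theorem.

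For the equivalence $(1)\Leftrightarrow(2)\Leftrightarrow(3)$, the M-P-inverse consistency conditions from Theorem \ref{the1} collapse under the above substitution: conditions of the form $R_{A_i}C_i=0$ and $D_iL_{B_i}=0$ become equivalent via conjugate transpose (since $D_i=C_i^{\eta^{\ast}}$ and $B_i=A_i^{\eta^{\ast}}$), and the paired conditions $R_{G_i}L_i=0$, $L_iL_{H_i}=0$ similarly collapse to $R_{G_i}L_i=0$, leaving the bilinear requirement $R_{E_{22}}EL_{E_{33}}=0$ in the self-adjoint form $R_{E_{22}}E(R_{E_{22}})^{\eta^{\ast}}=0$. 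The same mechanism halves the nine rank equalities \eqref{3.7}--\eqref{3.15} of Theorem \ref{the1} to the shorter list displayed in item $(3)$ here, after block row/column operations and identification of mutually $\eta^{\ast}$-conjugate blocks. Translating the explicit formulas in the conclusion of Theorem \ref{the1} through the substitutions $F_i\mapsto E_i^{\eta^{\ast}}$, $B_i\mapsto A_i^{\eta^{\ast}}$, $D_i\mapsto C_i^{\eta^{\ast}}$ then yields the closed-form expressions for $S_1,U_1,U_2,U_3$ and the auxiliary quantities $V_j,W_j$ stated in the theorem.

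The main obstacle is verifying faithfully that the symmetrization step preserves both the linear constraints and the bilinear Sylvester-type equation. Linearity of the one-sided equations makes the first check immediate, given that the pair $A_iD_i=C_iB_i$ holds under the substitution. For the bilinear equation, one must observe that $C_c$ is forced to be $\eta$-Hermitian whenever the system is consistent (the left-hand side of the Sylvester equation in \eqref{eq7-2} is manifestly self-$\eta^{\ast}$-conjugate), and that applying $(\cdot)^{\eta^{\ast}}$ to the full left-hand side simply permutes the summands when $X,Y,Z$ are replaced by their $\eta^{\ast}$-transposes and $U\leftrightarrow V^{\eta^{\ast}}$. Averaging therefore returns the same right-hand side $C_c$, so the $\eta$-Hermitian quadruple is a valid solution. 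Once this invariance under $\eta^{\ast}$-transposition is secured, the remainder of the argument is a bookkeeping exercise in transporting the formulas of Theorem \ref{the1} through the prescribed substitutions.
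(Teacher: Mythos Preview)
Your proposal is correct and follows essentially the same route as the paper: the paper also reduces \eqref{eq7-2} to an instance of \eqref{eq1} by introducing the auxiliary unknown $V=U^{\eta^{\ast}}$ (their system \eqref{eq4.2}), invokes Theorem~\ref{the1}, and then recovers the $\eta$-Hermitian solution by the averaging $U=\tfrac12(U_1+U_2^{\eta^{\ast}})$, $X=\tfrac12(\tilde X+\tilde X^{\eta^{\ast}})$, etc. Your write-up is in fact more explicit than the paper's terse proof about why the M-P and rank conditions collapse pairwise under $\eta^{\ast}$-conjugation and why the symmetrization step preserves the bilinear equation, but the underlying argument is identical.
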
		
\begin{proof}
	Since the solvability of the system \eqref{eq7-2} is equivalent to system
	\begin{equation}\label{eq4.2}
	\begin{aligned}
	&A_{1}U_1=B_{1},\ U_2(A_{1})^{\eta^{\ast}}=B_{1}^{\eta^{\ast}},\ U_2=(U_1)^{\eta^{\ast}},\\
	&A_{2}\tilde{X}=B_{2},\ \tilde{X}(A_{2})^{\eta^{\ast}}=B_{2}^{\eta^{\ast}} ,\ \tilde{X}=\tilde{X}^{\eta^{\ast}},\\
	&A_{3}\tilde{Y}=B_{3},\ \tilde{Y}(A_{3})^{\eta^{\ast}}=B_{3}^{\eta^{\ast}},\ \tilde{Y}=\tilde{Y}^{\eta^{\ast}},\\
	&A_{4}\tilde{Z}=B_{4},\ \tilde{Z}(A_{4})^{\eta^{\ast}}=B_{4}^{\eta^{\ast}},\ \tilde{Z}=\tilde{Z}^{\eta^{\ast}},\\
	E_1U_1&+U_2E_1^{\eta^{\ast}}+E_{2}\tilde{X}E_{2}^{\eta^{\ast}}+E_{3}\tilde{Y}E_{3}^{\eta^{\ast}}+E_{4}\tilde{Z}E_{4}^{\eta^{\ast}}=C_c.
	\end{aligned}
	\end{equation}
	If the system \eqref{eq7-2} has a solution, say, $(U,\ X,\ Y,\ Z)$, then
	$$
	(U_1,\ U_2,\ \tilde{X},\ \tilde{Y},\ \tilde{Z}):=(U,\ U^{\eta^{\ast}},\ X,\ Y,\ Z)
	$$
	is a solution to the system of matrix equations \eqref{eq4.2}. Conversely, if the system \eqref{eq4.2} has a solution, say
	$$
	(U_1,\ U_2,\ \tilde{X},\ \tilde{Y},\ \tilde{Z}),
	$$
	then equations \eqref{eq7-2} clearly has a solution	
\begin{align*}
	&(U,\ X,\ Y,\ Z):\\
	&=(\frac{U_1+(U_2)^{\eta^{\ast}}}{2}\frac{\tilde{X}+(\tilde{X})^{\eta^{\ast}}}{2},\ \frac{\tilde{Y}+(\tilde{Y})^{\eta^{\ast}}}{2},\ \frac{\tilde{Z}+(\tilde{Z})^{\eta^{\ast}}}{2}).
	\end{align*}
\end{proof}		
Next, we study the special case \eqref{eq7-3} of the matrix equations \eqref{eq7-2}.		
	\begin{theorem}\label{the4.1} Let $ A_{i}, C_{i}, E_{i}\ (i=\overline{1,3})$ and $C$ be given with appropriate size. Set
			\begin{align*}
			&E_{1}L_{A_{1}}=A_{11},\ E_{2}L_{A_{2}}=A_{22},\ E_{3}L_{A_{3}}=A_{33},\ R_{A_{11}}A_{22}=M_{1},\ S_{1}=A_{22}L_{M_{1}},\\ &T_{1}=C-E_{1}(A_{1}^{\dagger}B_{1}+L_{A_{1}}B_{1}^{\eta^{\ast}}(A_{1}^{\eta^{\ast}})^{\dagger})E_{1}^{\eta^{\ast}}-E_{2}(A_{2}^{\dagger}B_{2}+L_{A_{2}}B_{2}^{\eta^{\ast}}(A_{2}^{\eta^{\ast}})^{\dagger})E_{2}^{\eta^{\ast}}\\
			&-E_{3}(A_{3}^{\dagger}B_{3}+L_{A_{3}}B_{3}^{\eta^{\ast}}(A_{3}^{\eta^{\ast}})^{\dagger})E_{3}^{\eta^{\ast}},\ G=R_{M_{1}}R_{A_{11}},\\
			& G_{1}=GA_{33} ,\ G_{2}=R_{A_{11}}A_{33},\ G_{3}=R_{A_{22}}A_{33},\ G_{4}=A_{33},\\
			&L_{1}=GT_{1},\ L_{2}=R_{A_{11}}T_{1}(R_{A_{22}})^{\eta^{\ast}},\ L_{3}=R_{A_{22}}T_{1}(R_{A_{11}})^{\eta^{\ast}},\\
			&L_{4}=T_{1}G^{\eta^{\ast}},\ C_{11}=(L_{C_{2}},\ L_{C_{4}}),\ C_{22}=L_{C_{1}},\ C_{33}=L_{C_{3}},\\
			&E_{11}=R_{C_{11}}C_{22},\ E_{22}=R_{C_{11}}C_{33},\ M=R_{E_{11}}E_{22},\\
			& N=(R_{E_{22}}E_{11})^{\eta^{\ast}},\ F=F_{2}-F_{1},\ E=R_{C_{11}}F(R_{C_{11}})^{\eta^{\ast}},\\
			& S=E_{22}L_{M},\ F_{11}=G_{2}L_{G_{1}},\ H_{1}=L_{2}-G_{2}G_{1}^{\dagger}L_{1}(G_{4}^{\eta^{\ast}})^{\dagger}G_{3}^{\eta^{\ast}},\\
			& F_{22}=G_{4}L_{G_{3}},\ H_{2}=L_{4}-G_{4}G_{3}^{\dagger}L_{3}(G_{2}^{\eta^{\ast}})^{\dagger}G_{1}^{\eta^{\ast}},\\
			&F_{1}=G_{1}^{\dagger}L_{1}(G_{4}^{\eta^{\ast}})^{\dagger}+L_{G_{1}}G_{2}^{\dagger}L_{2}(G_{3}^{\eta^{\ast}})^{\dagger},\ F_{2}=G_{3}^{\dagger}L_{3}(G_{2}^{\eta^{\ast}})^{\dagger}+L_{G_{3}}G_{4}^{\dagger}L_{4}(G_{1}^{\eta^{\ast}})^{\dagger}.
			\end{align*}
			Then the following statements are equivalent:
			
			$\mathrm{(1)}$ The system of the matrix equations \eqref{eq7-3} is consistent.
			
			$\mathrm{(2)}$
			\begin{align*}
			& R_{A_{j}}B_{j}=0,\ R_{G_{i}}L_{i}=0 (i=\overline{1,4}, j=\overline{1,3}),\\
			& R_{E_{22}}E(R_{E_{22}})^{\eta^{\ast}}=0.
			\end{align*}
			$\mathrm{(3)}$
			\begin{align*}
			&r(A_{j},\ B_{j})=r(A_{j})(j=1,2,3),\\
			&r\left(
			\begin{array}{cccc}
			C & E_{3} & E_{1} & E_{2} \\
			B_{3}E_{3}^{\eta^{\ast}} & A_{3} & 0 & 0 \\
			B_{1}E_{1}^{\eta^{\ast}} & 0 & A_{1} & 0 \\
			B_{2}E_{2}^{\eta^{\ast}} & 0 & 0 & A_{2} \\
			\end{array}
			\right)=r\left(
			\begin{array}{ccc}
			E_{3} & E_{1} & E_{2} \\
			A_{3} & 0 & 0 \\
			0 & A_{1} & 0 \\
			0 & 0 & A_{2} \\
			\end{array}
			\right),\\
\end{align*}
\begin{align*}
			&r\left(
			\begin{array}{cccc}
			C & E_{3} & E_{1} & E_{2}B_{2}^{\eta^{\ast}} \\
			E_{2}^{\eta^{\ast}} & 0 & 0 & A_{2}^{\eta^{\ast}} \\
			B_{3}E_{3}^{\eta^{\ast}} & A_{3} & 0 & 0 \\
			B_{1}E_{1}^{\eta^{\ast}} & 0 & A_{1} & 0 \\
			\end{array}
			\right)=r\left(
			\begin{array}{cc}
			E_{3} & E_{1} \\
			A_{3} & 0 \\
			0 & A_{1} \\
			\end{array}
			\right)+r\left(
			\begin{array}{c}
			E_{2} \\
			A_{2} \\
			\end{array}
			\right),\\
			&r\left(
			\begin{array}{cccc}
			C & E_{3} & E_{2} & E_{1}B_{1}^{\eta^{\ast}} \\
			E_{1}^{\eta^{\ast}} & 0 & 0 & A_{1}^{\eta^{\ast}} \\
			B_{3}E_{3}^{\eta^{\ast}} & A_{3} & 0 & 0 \\
			B_{2}E_{2}^{\eta^{\ast}} & 0 & A_{2} & 0 \\
			\end{array}
			\right)=r\left(
			\begin{array}{cc}
			E_{3} & E_{2} \\
			A_{3} & 0 \\
			0 & A_{2} \\
			\end{array}
			\right)+r\left(
			\begin{array}{c}
			E_{1} \\
			A_{1} \\
			\end{array}
			\right),\\
			&r\begin{pmatrix}
			C & E_{3} & E_{1}B_{1}^{\eta^{\ast}} & E_{2}B_{2}^{\eta^{\ast}} \\
			E_{1}^{\eta^{\ast}} & 0 & A_{1}^{\eta^{\ast}} & 0 \\
			E_{2}^{\eta^{\ast}} & 0 & 0 & A_{2}^{\eta^{\ast}} \\
			B_{3}E_{3}^{\eta^{\ast}} & A_{3} & 0 & 0 \\
			\end{pmatrix}=r
			\begin{pmatrix}
			E_{1} & E_{2} \\
			A_{1} & 0 \\
			0 & A_{2} \\
			\end{pmatrix}+r\begin{pmatrix}
			E_{3} \\
			A_{3} \\
			\end{pmatrix},\\
			&r\left(
			\begin{array}{cccccccc}
			C & 0 & E_{1} & 0 & E_{3} & E_{2}B_{2}^{\eta^{\ast}} & 0 & E_{3}B_{3}^{\eta^{\ast}} \\
			0 & -C & 0 & E_{2} & E_{3} & 0 & -E_{1}B_{1}^{\eta^{\ast}} & 0 \\
			E_{2}^{\eta^{\ast}} & 0 & 0 & 0 & 0 & A_{2}^{\eta^{\ast}} & 0 & 0 \\
			0 & E_{1}^{\eta^{\ast}} & 0 & 0 & 0 & 0 & A_{1}^{\eta^{\ast}} & 0 \\
			E_{3}^{\eta^{\ast}} & E_{3}^{\eta^{\ast}} & 0 & 0 & 0 & 0 & 0 & A_{3}^{\eta^{\ast}} \\
			B_{1}E_{1}^{\eta^{\ast}} & 0 & A_{1} & 0 & 0 & 0 & 0 & 0 \\
			0 & -B_{2}E_{2}^{\eta^{\ast}} & 0 & A_{2} & 0 & 0 & 0 & 0 \\
			0 & -B_{3}E_{3}^{\eta^{\ast}} & 0 & 0 & A_{3} & 0 & 0 & 0 \\
			\end{array}
			\right)\\
			&=2r\left(
			\begin{array}{ccc}
			E_{1} & 0 & E_{3} \\
			0 & E_{2} & E_{3}  \\
			A_{1} & 0 & 0  \\
			0 & A_{2} & 0  \\
			0 & 0 & A_{3}  \\
			\end{array}
			\right).
			\end{align*}	
In this case, the general solution of matrix equation \eqref{eq7-3} can be expressed as
			\begin{align*}
			&X=\frac{\tilde{X}+(\tilde{X})^{\eta^{\ast}}}{2},\ Y=\frac{\tilde{Y}+(\tilde{Y})^{\eta^{\ast}}}{2},\ Z=\frac{\tilde{Z}+(\tilde{Z})^{\eta^{\ast}}}{2},\\
			&\tilde{X}=A_{1}^{\dagger}B_{1}+L_{A_{1}}B_{1}^{\eta^{\ast}}(A_{1}^{\eta^{\ast}})^{\dagger}+L_{A_{1}}U_{1}(L_{A_{1}})^{\eta^{\ast}},\\
			&\tilde{Y}=A_{2}^{\dagger}B_{2}+L_{A_{2}}B_{2}^{\eta^{\ast}}(A_{2}^{\eta^{\ast}})^{\dagger}+L_{A_{2}}U_{2}(L_{A_{2}})^{\eta^{\ast}},\\
			&\tilde{Z}=A_{3}^{\dagger}B_{3}+L_{A_{3}}B_{3}^{\eta^{\ast}}(A_{3}^{\eta^{\ast}})^{\dagger}+L_{A_{3}}U_{3}(L_{A_{3}})^{\eta^{\ast}},
			\end{align*}
			where	
			\begin{align*}
			&U_{1}=A_{11}^{\dagger}T(A_{11}^{\dagger})^{\eta^{\ast}}-A_{11}^{\dagger}A_{22}M_{1}^{\dagger}T(A_{11}^{\dagger})^{\eta^{\ast}}-A_{11}^{\dagger}U_{4}A_{22}^{\dagger}T(M_{1}^{\dagger})^{\eta^{\ast}}(A_{22}^{\dagger})^{\eta^{\ast}}+L_{A_{11}}U_{5}+U_{6}R_{A_{11}^{\eta^{\ast}}},\\
			&U_{2}=M_{1}^{\dagger}T(A_{22}^{\dagger})^{\eta^{\ast}}+S_{1}^{\dagger}S_{1}A_{22}^{\dagger}T(M_{1}^{\dagger})^{\eta^{\ast}}+L_{M_{1}}L_{S_{1}}U_{7}+U_{8}R_{A_{22}^{\eta^{\ast}}}+L_{M_{1}}U_{4}R_{M_{1}^{\eta^{\ast}}},\\
			&U_{3}=F_{1}^{0}+L_{G_{2}}V_{1}+V_{2}R_{G_{4}^{\eta^{\ast}}}+L_{G_{1}}V_{3}R_{G_{3}^{\eta^{\ast}}},\ or \ U_{3}=F_{2}^{0}-L_{G_{4}}W_{1}-W_{2}R_{G_{2}^{\eta^{\ast}}}-L_{G_{3}}W_{3}R_{G_{1}^{\eta^{\ast}}},\\
		\end{align*}
			\begin{align*}	
&V_{1}=(I_{m},\ 0)\left[C_{11}^{\dagger}(F-C_{22}V_{3}C_{33}^{\eta^{\ast}}-C_{33}W_{3}C_{22}^{\eta^{\ast}})\right]-(I_{m},\ 0)\left[C_{11}^{\dagger}U_{11}C_{11}^{\eta^{\ast}}+L_{C_{11}}U_{12}\right],\\
			&W_{1}=(0,\ I_{m})\left[C_{11}^{\dagger}(F-C_{22}V_{3}C_{33}^{\eta^{\ast}}-C_{33}W_{3}C_{22}^{\eta^{\ast}})\right]-(0,\ I_{m})\left[C_{11}^{\dagger}U_{11}C_{11}^{\eta^{\ast}}+L_{C_{11}}U_{12}\right],\\
			&W_{2}=\left[R_{C_{11}}(F-C_{22}V_{3}C_{33}^{\eta^{\ast}}-C_{33}W_{3}C_{22}^{\eta^{\ast}})(C_{11}^{\eta^{\ast}})^{\dagger}\right]\left(
			\begin{array}{c}
			0 \\
			I_{n} \\
			\end{array}
			\right)+\left[C_{11}C_{11}^{\dagger}U_{11}+U_{21}L_{C_{11}}^{\eta^{\ast}}\right]\left(
			\begin{array}{c}
			0 \\
			I_{n} \\
			\end{array}
			\right),\\
			&V_{2}=R_{C_{11}}(F-C_{22}V_{3}C_{33}^{\eta^{\ast}}-C_{33}W_{3}C_{22}^{\eta^{\ast}})(C_{11}^{\eta^{\ast}})^{\dagger}\left(
			\begin{array}{c}
			0 \\
			I_{n} \\
			\end{array}
			\right)+\left[C_{11}C_{11}^{\dagger}U_{11}+U_{21}L_{C_{11}}^{\eta^{\ast}}\right]\left(
			\begin{array}{c}
			I_{n} \\
			0 \\
			\end{array}
			\right),\\
			&V_{3}=E_{11}^{\dagger}F(E_{22}^{\eta^{\ast}})^{\dagger}-E_{11}^{\dagger}E_{22}M^{\dagger}F(E_{22}^{\eta^{\ast}})^{\dagger}-E_{11}^{\dagger}SE_{22}^{\dagger}FN^{\dagger}E_{11}^{\eta^{\ast}}(E_{22}^{\eta^{\ast}})^{\dagger}-E_{11}^{\dagger}SU_{31}R_{N}E_{11}^{\eta^{\ast}}(E_{22}^{\eta^{\ast}})^{\dagger}\\
			&+L_{E_{11}}U_{32}+U_{33}L_{E_{22}}^{\eta^{\ast}},\\
			&W_{3}=M^{\dagger}F(E_{11}^{\eta^{\ast}})^{\dagger}+S^{\dagger}SE_{22}^{\dagger}FN^{\dagger}+L_{M}L_{S}U_{41}+L_{M}U_{31}R_{N}-U_{42}L_{E_{11}}^{\eta^{\ast}},
			\end{align*}
			where $T=T_{1}-A_{33}U_{3}(A_{33})^{\eta^{\ast}}$, $U_{j} (j=4,5,6)$, $U_{i1} (i=1,2,3,4)$, $U_{12}$, $U_{32}$, $U_{33}$ and $U_{42}$ are any matrices with appropriate dimensions.
\end{theorem}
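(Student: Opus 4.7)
The plan is to reduce this theorem to an application of Theorem~\ref{them3.2} by the same symmetrization device used for Theorem~\ref{them4.1}, then show that the resulting list of rank conditions collapses under $\eta$-Hermicity to the compact list stated here.

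First, I would lift \eqref{eq7-3} to the auxiliary non-Hermitian system
\begin{align*}
A_1 \tilde X &= B_1,\quad \tilde X A_1^{\eta^{\ast}} = B_1^{\eta^{\ast}},\\
A_2 \tilde Y &= B_2,\quad \tilde Y A_2^{\eta^{\ast}} = B_2^{\eta^{\ast}},\\
A_3 \tilde Z &= B_3,\quad \tilde Z A_3^{\eta^{\ast}} = B_3^{\eta^{\ast}},\\
E_1 \tilde X E_1^{\eta^{\ast}} &+ E_2 \tilde Y E_2^{\eta^{\ast}} + E_3 \tilde Z E_3^{\eta^{\ast}} = C.
\end{align*}
Just as in the proof of Theorem~\ref{them4.1}, one checks directly that \eqref{eq7-3} is consistent if and only if this auxiliary system is, and that the symmetrization $(X,Y,Z) = \bigl((\tilde X + \tilde X^{\eta^{\ast}})/2,\ (\tilde Y + \tilde Y^{\eta^{\ast}})/2,\ (\tilde Z + \tilde Z^{\eta^{\ast}})/2\bigr)$ of any solution of the auxiliary system is an $\eta$-Hermitian solution of \eqref{eq7-3}; here the hypothesis $C = C^{\eta^{\ast}}$ is essential, so that symmetrizing the left-hand side still returns $C$.

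Next, I would apply Theorem~\ref{them3.2} to the auxiliary system via the dictionary: the ``$B_i$'' of Theorem~\ref{them3.2} plays the role of $A_i^{\eta^{\ast}}$, the ``$D_i$'' of $B_i^{\eta^{\ast}}$, and the ``$F_i$'' of $E_i^{\eta^{\ast}}$, while $A_i, C_i, E_i$ are retained. Under this dictionary the derived matrices $A_{ii}, M_1, S_1, G, G_j, L_j, C_{11}, C_{22}, C_{33}, E_{11}, E_{22}, M, N, E, S, F_1, F_2$ in Theorem~\ref{the4.1} coincide with the corresponding objects built in Theorem~\ref{them3.2}. The consistency identities $A_i D_i = C_i B_i$ of Theorem~\ref{them3.2} become $A_i B_i^{\eta^{\ast}} = B_i A_i^{\eta^{\ast}}$, which follow automatically from $R_{A_i} B_i = 0$ by $\eta^{\ast}$-conjugation and therefore need not appear in the statement.

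The main obstacle will be verifying that the nine-fold list of rank equalities delivered by Theorem~\ref{them3.2} collapses to the five-line list here. The key structural observation is that $T_1$, and therefore every derived object obtained from it by the recipes of Theorem~\ref{them3.2}, is $\eta$-Hermitian, so the rank equalities come in pairs of $\eta^{\ast}$-conjugates that are equivalent to each other; this halves the count. In addition, the final coupled rank equality acquires the factor $2$ on its right-hand side (the $2r(\cdot)$ in the statement) because the two diagonal blocks of the relevant large matrix are $\eta^{\ast}$-transposes of one another. A parallel argument, using $(L_A)^{\eta^{\ast}} = R_{A^{\eta^{\ast}}}$ from the introduction, shows that the range condition $R_{E_{22}} E L_{E_{33}} = 0$ of Theorem~\ref{them3.2} specializes to $R_{E_{22}} E (R_{E_{22}})^{\eta^{\ast}} = 0$, because $E_{33}$ becomes $E_{22}^{\eta^{\ast}}$ under the dictionary.

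Finally, to write down the explicit $\eta$-Hermitian general solution, I would take the formulas for $U_1, U_2, U_3$ produced by Theorem~\ref{them3.2} and pass them through the symmetrization $W \mapsto (W + W^{\eta^{\ast}})/2$. The compatibility of the Moore--Penrose inverse with $\eta^{\ast}$-conjugation, together with the identities $(L_A)^{\eta^{\ast}} = R_{A^{\eta^{\ast}}}$ and $(R_A)^{\eta^{\ast}} = L_{A^{\eta^{\ast}}}$, ensures that the resulting closed forms for $\tilde X, \tilde Y, \tilde Z$ are precisely those displayed in the theorem, with the free parameters $U_j$ and $U_{i1}$ retaining their roles as arbitrary matrices over $\mathbb{H}$.
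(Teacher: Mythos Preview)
Your route is sound but differs from the paper's. The paper proves this theorem in one line: it simply specializes Theorem~\ref{them4.1} by letting the data $A_1, C_1, E_1$ attached to the unknown $U$ vanish. You instead bypass Theorem~\ref{them4.1} entirely and reduce directly to Theorem~\ref{them3.2} via the symmetrization device. The two routes are equivalent because the diagram commutes: Theorem~\ref{them4.1} is itself obtained from Theorem~\ref{the1} by symmetrization, and Theorem~\ref{them3.2} is Theorem~\ref{the1} with the $U,V$ data suppressed. Your path is more self-contained and actually explains \emph{why} the nine rank conditions of Theorem~\ref{them3.2} collapse to five here (pairing under $\eta^{\ast}$), whereas the paper leaves all of that implicit in the earlier theorem.

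One claim in your argument is not right as stated. You assert that the compatibility identities $A_iB_i^{\eta^{\ast}}=B_iA_i^{\eta^{\ast}}$ (the specialization of $A_iD_i=C_iB_i$ from Theorem~\ref{them3.2}) follow automatically from $R_{A_i}B_i=0$ by $\eta^{\ast}$-conjugation. They do not: taking $A_i=I$ makes $R_{A_i}=0$ trivially, yet the compatibility then demands $B_i=B_i^{\eta^{\ast}}$, which is not automatic. What $\eta^{\ast}$-conjugation of $R_{A_i}B_i=0$ actually yields is $B_i^{\eta^{\ast}}L_{A_i^{\eta^{\ast}}}=0$, i.e.\ the condition $D_iL_{B_i}=0$ under your dictionary, not the compatibility condition. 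The paper's statement of Theorem~\ref{the4.1} also omits $A_iB_i^{\eta^{\ast}}=B_iA_i^{\eta^{\ast}}$ from part~(2), so you are matching the paper, but note that the paper \emph{does} list the analogous compatibility explicitly in the downstream Corollary~4.4; this suggests the omission here is an oversight rather than a derivable consequence.
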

\begin{proof}
	It follows from Theorem 4.1 that this theorem holds when $A_1, C_1$, and $E_1$ vanish in Theorem 4.1.	
	\end{proof}	
		In Theorem 4.2, let $A_{i},\ C_i,\ B_i,\ D_i (i=\overline{1,3})$, $E_3$ and $F_3$ be vanish. Then we can get the $\eta$-Hermitian solution of the matrix equation \eqref{eqn1}.
		
		\begin{corollary}
			Let $B_{1}, C_{1}$ and $D_{1}=D_{1}^{\eta^{\ast}}$ be given. Set $M=R_{B_{1}} C_{1}, S=C_{1} L_{M}$. Then the following statements are equivalent:
			
			$\mathrm{(1)}$ Matrix equation \eqref{eqn1} has a pair of $\eta$-Hermitian solutions $Y$ and $Z$.
			
			$\mathrm{(2)}$
			$$
			R_{M} R_{B_{1}} D_{1}=0, \quad R_{B_{1}} D_{1}\left(R_{C_{1}}\right)^{\eta^{\ast}}=0 .
			$$
			
			$\mathrm{(3)}$	
	\begin{align*}
			&r\begin{pmatrix}
			B_{1} & D_{1} \\
			0 & C_{1}^{\eta^{\ast}}
			\end{pmatrix}=r\left(B_{1}\right)+r\left(C_{1}\right),\\
			& r\begin{pmatrix}
			B_{1} & C_{1} & D_{1}
			\end{pmatrix}=r\begin{pmatrix}
			B_{1} & C_{1}
			\end{pmatrix} .
			\end{align*}
			In this case, the $\eta$-Hermitian solution to matrix equation \eqref{eqn1} can be expressed as
			\begin{align*}
			&Y=B_{1}^{\dagger} D_{1}\left(B_{1}^{\dagger}\right)^{\eta^{\ast}}-\frac{1}{2} B_{1}^{\dagger} C_{1} M^{\dagger} D_{1}\left[I+\left(C_{1}^{\dagger}\right)^{\eta^{\ast}} S^{\eta^{\ast}}\right]\left(B_{1}^{\dagger}\right)^{\eta^{\ast}} \\
			&-\frac{1}{2} B_{1}^{\dagger}\left(I+S C_{1}^{\dagger}\right) D_{1}\left(M^{\dagger}\right)^{\eta^{\ast}} C_{1}^{\eta^{\ast}}\left(B_{1}^{\dagger}\right)^{\eta^{\ast}}-B_{1}^{\dagger} S W_{2} S^{\eta^{\ast}}\left(B_{1}^{\dagger}\right)^{\eta^{\ast}}+L_{B_{1}} U+U^{\eta^{\ast}}\left(L_{B_{1}}\right)^{\eta^{\ast}}, \\
			&Z= \frac{1}{2} M^{\dagger} D_{1}\left(C_{1}^{\dagger}\right)^{\eta^{\ast}}\left[I+\left(S^{\dagger} S\right)^{\eta^{\ast}}\right]+\frac{1}{2}\left(I+S^{\dagger} S\right) C_{1}^{\dagger} D_{1}\left(M^{\dagger}\right)^{\eta^{\ast}} \\
			&+L_{M} W_{2}\left(L_{M}\right)^{\eta^{\ast}}+V L_{C_{1}}^{\eta^{\ast}}+L_{C_{1}} V^{\eta^{\ast}}+L_{M} L_{S} W_{1}+W_{1}^{\eta^{\ast}}\left(L_{S}\right)^{\eta^{\ast}}\left(L_{M}\right)^{\eta^{\ast}},
			\end{align*}
			where $W_{1}, U, V$ and $W_{2}=W_{2}^{\eta^{\ast}}$ are arbitrary matrices over $\mathbb{H}$ with appropriate sizes.
		\end{corollary}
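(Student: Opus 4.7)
The plan is to derive the Corollary as a direct specialization of Theorem \ref{the4.1}. Observe that equation \eqref{eqn1}, namely $B_1 X B_1^{\eta^\ast} + C_1 Y C_1^{\eta^\ast} = D_1$ with $X, Y$ being $\eta$-Hermitian, is the instance of \eqref{eq7-3} obtained by suppressing the three one-sided constraints $A_iW=C_i$ and deleting the third $\eta$-Hermitian variable $Z$. Concretely, I would set $A_1=A_2=A_3=0$ together with their right-hand sides, set $E_3=0$ in Theorem \ref{the4.1}, and relabel $(E_1,E_2,C)\mapsto(B_1,C_1,D_1)$.

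Under this substitution I would track how the auxiliary quantities of Theorem \ref{the4.1} collapse. Since $L_0=I$, we get $A_{11}=B_1$, $A_{22}=C_1$, and $A_{33}=0$; in particular $M_1=R_{B_1}C_1=M$ and $S_1=C_1L_M=S$ coincide with the Corollary's notation, and the correction $T_1$ reduces to $D_1$ because all the vanished one-sided right-hand sides are zero. Every $G_i$ carries a factor $A_{33}=0$ and therefore vanishes, so $L_{G_i}=I$, the block $C_{11}=(I,I)$ has full row rank, and $R_{C_{11}}=0$. Consequently the four conditions $R_{G_i}L_i=0$ reduce to $L_i=0$; two of them become $R_MR_{B_1}D_1=0$ and $R_{B_1}D_1(R_{C_1})^{\eta^\ast}=0$, while the remaining two are their $\eta$-conjugate transposes and follow automatically from $D_1=D_1^{\eta^\ast}$. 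The last condition $R_{E_{22}}E(R_{E_{22}})^{\eta^\ast}=0$ is trivial because $E=R_{C_{11}}FR_{C_{11}}^{\eta^\ast}=0$. This establishes $(1)\Leftrightarrow(2)$; the equivalence with $(3)$ then follows by applying Lemma \ref{lem2} to each of the two remaining range conditions.

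For the explicit formulas I would substitute the simplifications into the general solution of Theorem \ref{the4.1}. Because $A_{33}=0$ kills all contributions built from $G_i$ or $L_i$ for $i\in\{1,3,4\}$, the bulky expressions for the intermediate unknowns $\tilde X, \tilde Y$ collapse into a standard two-term Sylvester solution in which essentially only $L_2=R_{B_1}D_1(R_{C_1})^{\eta^\ast}$ survives. The $\eta$-Hermitian solution of \eqref{eqn1} is then recovered by the symmetrization $X=\frac{1}{2}(\tilde X+\tilde X^{\eta^\ast})$, $Y=\frac{1}{2}(\tilde Y+\tilde Y^{\eta^\ast})$; the half-factors and the paired free terms $L_{B_1}U+U^{\eta^\ast}(L_{B_1})^{\eta^\ast}$, $VL_{C_1}^{\eta^\ast}+L_{C_1}V^{\eta^\ast}$, and $L_ML_SW_1+W_1^{\eta^\ast}(L_S)^{\eta^\ast}(L_M)^{\eta^\ast}$ displayed in the Corollary are precisely what the averaging produces.

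The main technical obstacle is the bookkeeping required to check that the bulky solution formula of Theorem \ref{the4.1}, once restricted to this setting and symmetrized, collapses to the clean closed forms stated in the Corollary. This step is direct but tedious; everything else is routine substitution using the identities $L_0=I$, $R_0=I$, and $(R_A)^{\eta^\ast}=L_{A^{\eta^\ast}}$ recorded in the introduction.
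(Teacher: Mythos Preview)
Your approach is correct and matches the paper's: the paper derives this corollary by the one-line remark preceding it, namely specializing Theorem~\ref{the4.1} (Theorem~4.2) with $A_i$ and their right-hand sides vanishing and $E_3=0$, then relabeling $(E_1,E_2,C)\mapsto(B_1,C_1,D_1)$. Your detailed tracking of how the auxiliary quantities collapse (in particular $A_{33}=0\Rightarrow G_i=0$, the redundancy of $L_3=0$ and $L_4=0$ via $D_1=D_1^{\eta^\ast}$, and $R_{C_{11}}=0$ killing the last condition) is exactly the bookkeeping the paper leaves implicit, and is carried out correctly.
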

\noindent{\bf Remark 4.3.}\quad The above corollary has the main findings of \cite{H.W. 2013}.
		
		\begin{corollary}
			Let $A_{1}, C_{1}, A_{2}, A_{3}, B_1, D_1, D_3$ and $D_{3}=D_{3}^{\eta^{\ast}}$ be coefficient matrices in \eqref{eqn2}. Define some new matrices as follows:
			\begin{align*}
			& B_{4}=A_{2} L_{A_{1}}, \ C_{4}=A_{3}\left(R_{B_{1}}\right)^{\eta^{\ast}}, \\
			&D_{4} =D_{3}-A_{2}\left[A_{1}^{\dagger} C_{1}+\left(A_{1}^{\dagger} C_{1}\right)^{\eta *}-A_{1}^{\dagger} A_{1} C_{1}^{\eta^{\ast}}\left(A_{1}^{\dagger}\right)^{\eta *}\right] A_{2}^{\eta^{\ast}} \\
			&-A_{3}\left[D_{1} B_{1}^{\dagger}+\left(D_{1} B_{1}^{\dagger}\right)^{\eta^{\ast}}-\left(B_{1}^{\dagger}\right)^{\eta^{\ast}} B_{1}^{\eta^{\ast}} D_{1} B_{1}^{\dagger}\right] A_{3}^{\eta^{\ast}},\\
			&	M=R_{B_{4}}C_{4}, \quad S=C_{4} L_{M} .
			\end{align*}	
			Then the following statements are equivalent:
			
			$\mathrm{(1)}$ The system \eqref{eqn2} has a solution $(X, Y, Z)$, where $Y$ and $Z$ are $\eta$-Hermitian.
			
			$\mathrm{(2)}$ The coefficient matrices in equations \eqref{eqn2} satisfy
			\begin{align*}
			&A_{1} C_{1}^{\eta^{\ast}}=C_{1} A_{1}^{\eta^{\ast}}, \quad B_{1}^{\eta^{\ast}} D_{1}=D_{1}^{\eta^{\ast}} B_{1}, \\
			&R_{A_{1}} C_{1}=0, \quad D_{21} L_{B_{1}}=0, \quad R_{M} R_{B_4} D_4=0,\\
			& \quad R_{B_4} D_4\left(R_{C_4}\right)^{\eta^{\ast}}=0 .
			\end{align*}
			$\mathrm{(3)}$ The coefficient matrices in equations \eqref{eqn2} and their ranks satisfy
			\begin{align*}
			&A_{1} C_{1}^{\eta^{\ast}}=C_{1} A_{1}^{\eta^{\ast}}, \quad B_{1}^{\eta^{\ast}} D_{1}=D_{1}^{\eta^{\ast}} B_{1}, \\
			&r\left(\begin{array}{ll}
			A_{1} & C_{1}
			\end{array}\right)=r\left(A_{1}\right), \quad r\left(\begin{array}{c}
			D_{1} \\
			B_{1}
			\end{array}\right)=r\left(B_{1}\right),\\
			&r\left(\begin{array}{ccc}
			D_{3} & A_{3} & A_{2}  \\
			D_{1}^{\eta^{\ast}} A_{3}^{\eta^{\ast}} & B_{1}^{\eta^{\ast}} & 0\\
			C_{1} A_{2}^{\eta^{*}} & 0 & A_{1}  \\
			C_{1} & 0 & 0
			\end{array}\right)=r\left(\begin{array}{cc}
			A_{3} & A_{2}  \\
			B_{1}^{\eta *} & 0  \\
			0 & A_{1} \\
			\end{array}\right)+r\left(\begin{array}{l}
			A_{1}
			\end{array}\right),\\
			&r\left(\begin{array}{ccc}
			D_{3} & A_{2} &  A_{3} D_{1} \\
			A_{3}^{\eta^{\ast}}  & 0 & B_{1} \\
			C_{1} A_{2}^{\eta *} & A_{1} & 0  \\
			\end{array}\right)=r\left(\begin{array}{c}
			A_{2}  \\
			A_{1}
			\end{array}\right)+r\left(\begin{array}{cc}
			A_{3}^{\eta^{\ast}}  & B_{1}
			\end{array}\right).
			\end{align*}
			In this case, the general solution to the system of matrix equations \eqref{eqn2} can be expressed as
\begin{align*}
			&Y=Y^{\eta^{\ast}}=A_{1}^{\dagger} C_{1}+\left(A_{1}^{\dagger} C_{1}\right)^{\eta^{\ast}}-A_{1}^{\dagger} A_{1} C_{1}^{\eta^{\ast}}\left(A_{1}^{\dagger}\right)^{\eta^{\ast}}\\
			&+L_{A_{1}} V\left(L_{A_{1}}\right)^{\eta^{\ast}}, \\
			&Z=Z^{\eta^{\ast}}=D_{1} B_{1}^{\dagger}+\left(D_{1} B_{1}^{\dagger}\right)^{\eta^{\ast}}-\left(B_{1}^{\dagger}\right)^{\eta^{\ast}} B_{1}^{\eta^{\ast}} D_{1} B_{1}^{\dagger}\\
			&+\left(R_{B_{1}}\right)^{\eta^{\ast}} W R_{B_{1}},\\
			&V=V^{\eta^{\ast}}=B_{4}^{\dagger} D_4\left(B_{4}^{\dagger}\right)^{\eta^{\ast}}\\
			&-\frac{1}{2} B_{4}^{\dagger} C_4 M^{\dagger} D_4\left[I+\left(C_{4}^{\dagger}\right)^{\eta^{\ast}} S^{\eta *}\right]\left(B_{4}^{\dagger}\right)^{\eta^{\ast}}\\
				\end{align*}
			\begin{align*}
			&-\frac{1}{2} B_{4}^{\dagger}\left(I+S C_{4}^{\dagger}\right) D_4\left(M^{\dagger}\right)^{\eta^{\ast}} C_{4}^{\eta^{\ast}}\left(B_{4}^{\dagger}\right)^{\eta^{\ast}}\\
			&-B_{4}^{\dagger} S U_{6} S^{\eta^{\ast}}\left(B_{4}^{\dagger}\right)^{\eta^{\ast}}+L_{B_{4}} U_{4}+U_{4}^{\eta^{\ast}}\left(L_{B_{4}}\right)^{\eta^{\ast}},\\
			&W=W^{\eta^{\ast}}=\frac{1}{2} M^{\dagger} D_4\left(B^{\dagger}\right)^{\eta^{\ast}}\left[I+\left(S^{\dagger} S\right)^{\eta}\right]\\
			&+\frac{1}{2}\left(I+S^{\dagger} S\right) C_{4}^{\dagger} D_4\left(M^{\dagger}\right)^{\eta *}+L_{M} U_{6}\left(L_{M}\right)^{\eta}+U_{5} L_{C_{4}}^{\eta}\\
			&+L_{C_{4}} U_{5}^{\eta^{*}}+L_{M} L_{5} U_{3}+U_{3}^{\eta *}\left(L_{S}\right)^{\eta}\left(L_{M}\right)^{\eta^{\ast}},
			\end{align*}
			where $U_{3}, \ U_4\ , U_{5}$ and $U_{6}=U_{6}^{\eta^{\ast}}$ are arbitrary matrices over $\mathbb{H}$ with appropriate sizes.
\end{corollary}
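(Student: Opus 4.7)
The plan is to reduce system \eqref{eqn2} to a single two-term $\eta$-Hermitian matrix equation of the form \eqref{eqn1}, and then invoke the immediately preceding corollary. First, one observes that, under the constraint $X=X^{\eta^{\ast}}$, the equation $A_1X=C_1$ is equivalent to the coupled system $A_1X=C_1$, $XA_1^{\eta^{\ast}}=C_1^{\eta^{\ast}}$. Applying \lemref{lem2.2} to this coupled system produces the consistency conditions $R_{A_1}C_1=0$ and $A_1C_1^{\eta^{\ast}}=C_1A_1^{\eta^{\ast}}$, together with the general $\eta$-Hermitian solution
\begin{equation*}
X=A_1^{\dagger}C_1+(A_1^{\dagger}C_1)^{\eta^{\ast}}-A_1^{\dagger}A_1C_1^{\eta^{\ast}}(A_1^{\dagger})^{\eta^{\ast}}+L_{A_1}VL_{A_1}^{\eta^{\ast}},
\end{equation*}
where $V=V^{\eta^{\ast}}$ is an arbitrary free parameter. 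A symmetric argument applied to $YB_1=D_1$ with $Y=Y^{\eta^{\ast}}$ yields the conditions $D_1L_{B_1}=0$, $B_1^{\eta^{\ast}}D_1=D_1^{\eta^{\ast}}B_1$, and the analogous parametric form for $Y$ involving a free $\eta$-Hermitian parameter $W$.

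Next, I would substitute these expressions into $A_2XA_2^{\eta^{\ast}}+A_3YA_3^{\eta^{\ast}}=D_3$. The $V,W$-independent contributions collect into precisely the correction term subtracted from $D_3$ in the definition of $D_4$, while the parameter-dependent part, in view of $A_2L_{A_1}=B_4$ and $A_3R_{B_1}^{\eta^{\ast}}=C_4$, reduces to $B_4VB_4^{\eta^{\ast}}+C_4WC_4^{\eta^{\ast}}$. The remaining task is therefore to solve
\begin{equation*}
B_4VB_4^{\eta^{\ast}}+C_4WC_4^{\eta^{\ast}}=D_4
\end{equation*}
for $\eta$-Hermitian $V$ and $W$. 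Since $D_3=D_3^{\eta^{\ast}}$ and each bracketed factor in the definition of $D_4$ is itself $\eta$-Hermitian, one verifies $D_4=D_4^{\eta^{\ast}}$, so this reduced equation is exactly of the form \eqref{eqn1}. The preceding corollary then delivers the solvability conditions $R_MR_{B_4}D_4=0$ and $R_{B_4}D_4(R_{C_4})^{\eta^{\ast}}=0$ together with the explicit closed forms of $V$ and $W$; reinserting these into the parametric expressions for $X$ and $Y$ produces the formulas displayed in the statement and establishes $(1)\Leftrightarrow(2)$.

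The hard part, and the main obstacle, is the equivalence $(2)\Leftrightarrow(3)$, in which each projector equality from (2) must be translated into the corresponding block-rank identity in (3). The systematic tool is \lemref{lem2} (the Marsaglia-Styan formula), which converts each condition of the form $R_{\bullet}ML_{\star}=0$ into an equality between the rank of a suitably bordered block matrix and the sum of the ranks of the blocks $\bullet$ and $\star$. One then performs elementary block-row and block-column operations, substituting the definitions of $B_4$, $C_4$, $D_4$ and $M$, absorbing the projectors $L_{A_1}$ and $R_{B_1}$ by bringing the blocks $A_1,C_1,B_1,D_1$ back into the bordering, and using the identities $A_iL_{A_i}=0$ and $R_{B_i}B_i=0$. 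The bookkeeping is lengthy because each resulting bordered matrix is large and mixes $A_1,A_2,A_3,B_1,C_1,D_1,D_3$ with their $\eta$-conjugate transposes, but no idea beyond \lemref{lem2} and standard block manipulations is required.
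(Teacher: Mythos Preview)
Your approach is correct and matches the paper's intent. The paper does not supply an explicit proof for this corollary; it is stated without argument immediately after the corollary on equation \eqref{eqn1}, and the very definitions $B_4=A_2L_{A_1}$, $C_4=A_3(R_{B_1})^{\eta^{\ast}}$, $M=R_{B_4}C_4$, $S=C_4L_M$ mirror the notation of that preceding corollary, making clear that the intended derivation is precisely the reduction you describe: parametrize the $\eta$-Hermitian solutions of the linear constraints via \lemref{lem2.2}, substitute into the third equation to obtain $B_4VB_4^{\eta^{\ast}}+C_4WC_4^{\eta^{\ast}}=D_4$, and then invoke the \eqref{eqn1}-corollary; the rank equivalences $(2)\Leftrightarrow(3)$ are handled by \lemref{lem2} and block elementary operations exactly as you outline.
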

\section{Algorithm with a numerical example}
	In this section, we present an algorithm and an example to illustrate Theorem 3.1.
	
	$\textbf{Algorithm 5.1}$
	
	(1) Feed the values of $A_{i}, B_{i}\ C_{i},\ D_{i},\ E_{i},\ F_{i} (i=\overline{1,4})$ and $C_c$ with conformable shapes over $\mathbb{H}$.
	
	(2) Compute the symbols in \eqref{3.1} to \eqref{3.4}.
	
	(3) Check (2) in Theorem 3.1 or  \eqref{3.6} to \eqref{3.15}. If no, it returns ``inconsisten".
	
	(4) Else, compute $U\ V,\ X,\ Y,\ Z$.
	
	\textbf{Example 5.1} Let
	\begin{align*}
	&A_{1}=\left(
	\begin{array}{ccc}
	1 & 0 & \mathbf{i}
	\end{array}
	\right),\ B_{1}=\left(
	\begin{array}{c}
	0  \\
	1 \\
	\mathbf{j}
	\end{array}
	\right),\ A_{2}=\left(
	\begin{array}{cc}
	0 & \mathbf{i} \\
	\mathbf{j} & \mathbf{k} \\
	\end{array}
	\right),\ B_{2}=\left(
	\begin{array}{c}
	\mathbf{i} \\
	1
	\end{array}
	\right),\\
	& A_{3}=\left(
	\begin{array}{cc}
	0 & 1 \\
	\mathbf{i} & \mathbf{j} \\
	\end{array}
	\right),\ B_{3}=\left(
	\begin{array}{c}
	\mathbf{j} \\
	1
	\end{array}\right),\ A_{4}=\left(
	\begin{array}{cc}
	\mathbf{i} & 1 \\
	0 & \mathbf{k} \\
	\end{array}
	\right),\ B_{4}=\left(
	\begin{array}{c}
	1 \\
	\mathbf{k}
	\end{array}
	\right),\\
	&C_{1}=1+2\mathbf{i},\ D_{1}=\left(
	\begin{array}{c}
	\mathbf{j} \\
	-\mathbf{i}+\mathbf{j}
	\end{array}
	\right),\ C_{2}=\left(
	\begin{array}{cc}
	-1 & 3\mathbf{i} \\
	3\mathbf{j} & 3\mathbf{k} \\
	\end{array}
	\right),\ D_{2}=\left(
	\begin{array}{c}
	2\mathbf{i} \\
	2
	\end{array}
	\right),\\
	&C_{3}=\left(
	\begin{array}{cc}
	0 & 2 \\
	\mathbf{i} & -1+2\mathbf{j} \\
	\end{array}
	\right),\ C_{4}=\left(
	\begin{array}{cc}
	-1 & 1+\mathbf{k} \\
	0& \mathbf{k} \\
	\end{array}
	\right),\ D_{3}=\left(
	\begin{array}{c}
	\mathbf{i}+\mathbf{j} \\
	2
	\end{array}
	\right),\\\
		&D_{4}=\left(
		\begin{array}{c}
		2\mathbf{i} \\
		\mathbf{k}
		\end{array}
		\right), \ C_{3}=\left(
		\begin{array}{cc}
		-1 & 1+\mathbf{k} \\
		0& \mathbf{k} \\
		\end{array}
		\right),\  D_{1}=\left(
		\begin{array}{c}
		2\mathbf{i} \\
		2
		\end{array}
		\right),\\
		& D_{2}=\left(
		\begin{array}{c}
		\mathbf{i}+\mathbf{j} \\
		2
		\end{array}
		\right),\ D_{3}=\left(
		\begin{array}{c}
		2\mathbf{i} \\
		\mathbf{k}
		\end{array}
		\right), \ E_{1}=\left(
		\begin{array}{cc}
		2 & \mathbf{i} \\
		0 & \mathbf{k} \\
		\end{array}
		\right),\\
		& E_{2}=\left(
		\begin{array}{cc}
		\mathbf{i} & \mathbf{j} \\
		0 & \mathbf{k} \\
		\end{array}
		\right),\  E_{3}=\left(
		\begin{array}{cc}
		0 & 1\\
		\mathbf{j} & \mathbf{k}
		\end{array}
		\right),\ F_{1}=\left(
		\begin{array}{c}
		\mathbf{i} \\
		\mathbf{j} \\
		\end{array}
		\right),\\
		& F_{2}=\left(
		\begin{array}{c}
		\mathbf{j} \\
		\mathbf{i} \\
		\end{array}
		\right),\ F_{3}=\left(
		\begin{array}{c}
		\mathbf{k} \\
		\mathbf{i} \\
		\end{array}
		\right),\ C=\left(
		\begin{array}{c}
		3\mathbf{i}+2\mathbf{k} \\
		1-4\mathbf{i}+3\mathbf{j}-\mathbf{k} \\
		\end{array}
		\right).
		\end{align*}
Computation  directly  yields
		\begin{align*}
		&A_{1}D_{1}=C_{1}B_{1}=\left(
		\begin{array}{c}
		2\mathbf{i} \\
		0  \\
		\end{array}
		\right),\\ &A_{2}D_{2}=C_{2}B_{2}=\left(
		\begin{array}{c}
		2 \\
		-1+2\mathbf{j}+\mathbf{k} \\
		\end{array}
		\right),\\
		&A_{3}D_{3}=C_{3}B_{3}=\left(
		\begin{array}{c}
		-2+\mathbf{k} \\
		-\mathbf{i} \\
		\end{array}
		\right),\\
		&r(C_{i},\ A_{i})=r(A_{i})=2,\ r\left(
		\begin{array}{c}
		D_{i} \\
		B_{i} \\
		\end{array}
		\right)=r(B_{i})=1 (i=\overline{1,3}),\\
		&\eqref{3.7}=11,\ \eqref{3.8}=8,\ \eqref{3.9}=10,\ \eqref{3.10}=9,\\
		&\eqref{3.11}=10,\ \eqref{3.12}=9,\ \eqref{3.13}=9,\ \eqref{3.14}=8,\ \eqref{3.15}=19.
		\end{align*}
	All the rank equalities in \eqref{3.6} to \eqref{3.15} hold. Hence, according to Theorem 3.1, the system of matrix equations \eqref{eq1} has a solution, and the general solution to matrix equations \eqref{eq1} can be expressed as
	\begin{align*}
	&U=\begin{pmatrix}
	0.5000+1.0000\mathbf{i}\\
	0\\
	1-0.5000\mathbf{i}\end{pmatrix}\\
	&+\begin{pmatrix}
	0.5000&0&-0.5000\mathbf{i}\\
	0&1.000&0\\
	0.50000\mathbf{i}&0&0.5000
	\end{pmatrix}S_1,\\
	&V=\begin{pmatrix}
	0&0.5000\mathbf{j}&0.5000\\
	0&-0.5000\mathbf{i}+0.5000\mathbf{j}&0.5000+0.5000\mathbf{k}
	\end{pmatrix},\\
	& X=\left(
	\begin{array}{cc}
	2.000 & 0 \\
	1.000\mathbf{i} & 3.000 \\
	\end{array}
	\right),\\
	& Y=\left(
	\begin{array}{cc}
	1.000 & 1.000\mathbf{i} \\
	0 & 2.000 \\
	\end{array}
	\right),\ Z=\left(
	\begin{array}{cc}
	1.000\mathbf{i} & 1.000\mathbf{j} \\
	0 & 1.000 \\
	\end{array}
	\right),
	\end{align*}
	where
	\begin{align*}
	&S_1=\begin{pmatrix}
	-0.5000+2.000\mathbf{i}-1.0000\mathbf{k}\\
	1.0000-3.0000\mathbf{i}+1.0000\mathbf{j}+1.0000\mathbf{k}\\
	-2.0000-0.5000\mathbf{i}+1.0000\mathbf{j}
	\end{pmatrix}
	\end{align*}
	\begin{align*}
	&-\begin{pmatrix}
	1.0000&-1.0000\mathbf{i}\\
	-1.0000&1.0000\mathbf{i}+1.0000\\
	1.0000\mathbf{i}&1.0000
	\end{pmatrix}W_{11}\begin{pmatrix}
	2.0000\\
	-1.0000\mathbf{k}\\
	1.0000\mathbf{i}
	\end{pmatrix},
	\end{align*}
	$W_{11}$ is a any matrix equation with suitable size over $\mathbb{H}$.
		
		Finally, we give the following conclusion that summarizes the work of this paper.
\section{Conclusions }
		 We have established the solvability conditions and a formula for the general solution to the Sylvester-type quaternion matrix equations \eqref{eq1}. As an application of equations \eqref{eq1}, we also have established some necessary and sufficient conditions for the system of quaternion matrix equations \eqref{eq7-2} to provide a solution and derived an exact expression of its general solution involving $\eta$-Hermicity. As a special case of equations \eqref{eq1}, we have presented the necessary and sufficient conditions for the system of two-sided Sylvester-type quaternion matrix equations \eqref{eq7-1} to be consistent and derived a formula for its general solution (when it is solvable). As a special case of equations \eqref{eq7-2}, we have investigated the necessary and sufficient conditions for the system of matrix equations \eqref{eq7-3} to have a solution and provided a general solution, which is an $\eta$-Hermitian.
		
		It is noteworthy that the main results of \eqref{eq1} are available over $\mathbb{R}$ and $\mathbb{C}$ and for any division ring. Furthermore, motivated by \cite{Li 2022}, we can investigate equations \eqref{eq1} in tensor form.


\begin{thebibliography}{99}
	\bibitem{J.K. 1980} J.K. Baksalary, R. Kala. The matrix equations $AXB+CYD=E$. Linear Algebra Appl., 30:141-147, 1979.
			\bibitem{Brahma 2015}S. Brahma, B. Datta. An optimization approach for minimum norm and robust partial quadratic eigenvalue assignment problems for vibrating structures. J. Sound Vibration, 324(3):471-489, 2009.
			\bibitem{J.N}J.N. Buxton, R.F. Churchouse, and A.B. Tayler. Matrices Methods and Applications. Clarendon Press, Oxford, 1990.
			\bibitem{Bihan 2004}N.L. Bihan, J. Mars. Singular value decomposition of quaternion matrices:
			A new tool for vector-sensor signal processing. Signal Processing, 84(7):1177-1199,  2004.	
			\bibitem{D.L. 1998} D.L. Chu, H. Chan, and D.W.C. Ho.	Regularization of singular systems by derivative and proportional output feedback. SIAM J. Math. Anal., 19:21-38, 1998.
			\bibitem{D.L. 2000} D.L. Chu, L. De Lathauwer, and B. Moor. On the computation of restricted singular value decomposition via cosine-sine decomposition. SIAM J. Math. Anal., 22:550-601, 2000.
			\bibitem{D.L. 2009}  D.L. Chu, Y.S. Hung, and H.J. Woerdeman. Inertia and rank characterizations of some matrix expressions. SIAM J. Math. Anal., 31:1187-1226, 2009.
			
			
			\bibitem{E.B. 2005} E.B. Castelan, V.G. da Silva.  On the solution of a Sylvester equation appearing in descriptor systems control theory. Systems Control Lett., 54:109-117, 2005.
			
			\bibitem{F.H. 1977} H. Flandersm, H.K. Wimmer. On the matrix equations $AX-XB=C$ and $AX-YB=C$. SIAM J. Appl Math., 32:707-710, 1977.
			
			\bibitem{H.W. 2013} Z.H. He, Q.W. Wang. A real quaternion matrix equation with with applications. Linear Multilinear
			Algebra, 61:725-740,  2013.
			\bibitem{He 2018} Z.H. He, Q.W. Wang, and Y. Zhang. A system of quaternary coupled Sylvester-type real quaternion matrix equations. Automatica, 87:25-31,  2018.
			
			\bibitem{H.Z 2014} Z.H. He, Q.W. Wang. The $\eta$-bihermitian solution to a system of real quaternion matrix equations. Linear Multilinear Algebra, 62:1509-1528,  2018.
			\bibitem{H.Z 2017}  Z.H. He, Q.W. Wang, and Y. Zhang. Simultaneous decomposition of quaternion matrices involving $\eta$-Hermicity with applications. Appl. Math. Comput., 298:13-35, 2017.
			
			\bibitem{Z.H. 2021} 	Z.H. He, M. Wang. A quaternion matrix equation with two different restrictions. Advances in Applied Clifford Algebras, 31:25, 2021.
			\bibitem{Agudelo 2016}Z.H. He, O.M. Agudelo, Q. W. Wang, and B. De Moor.  Two-sided coupled generalized sylvester matrix equations solving using a simultaneous decomposition for fifteen matrices. Linear Algebra Appl., 496:549-593,  2016.
			\bibitem{Z.G. 2019} Z.G. Jia,  M.K. Ng, and G.J. Song. Robust quaternion matrix completion with applications to image inpainting. Numer. Linear Algebra
			Appl., 26:e2245, 2019.
			\bibitem {januKyrchei02}I. Kyrchei. Explicit representation formulas for the minimum norm least squares solutions of some quaternion matrix equations. Linear Algebra Appl.,  438:136-152,  2018.
			\bibitem {januKyrchei03}I. Kyrchei. Determinantal representations of the Drazin inverse over the quaternion	skew field with applications to some matrix equations. Appl. Math. Comput., 238:193-207, 2014.	
			
			
			\bibitem{Liu 2021}L.S. Liu, Q.W. Wang, and M.S. Mehany. A Sylvester-type matrix equation over the Hamilton quaternions with an application. Mathematics, 10:1758, 2022.
			
			\bibitem{Liu 2022}L.S. Liu, Q.W. Wang, J.F. Chen, and Y.Z. Xie. An exact solution to a quaternion matrix equation with an application. Symmetry, 14(2):375,  2022.
			\bibitem{Li 2022}T. Li, Q.W. Wang, and X.F. Zhang, A modified conjugate residual method and nearest kronecker product preconditioner for the generalized coupled Sylvester tensor equations. Mathematics, 10:1730,  2022.
			
			
			\bibitem {xinliu03}	X. Liu, Y. Zhang, Consistency of split quaternion matrix equations $AX^{\star}-XB=CY+D$ and $X-AX^{\star}B=CY+D$. Adv. Appl. Clifford Algebras. 64:1-20, 2019.
			\bibitem{M.S 2022} M.S. Mehany, Q.W. Wang. Three symmetrical systems of coupled Sylvester-like quaternion matrix equations. Symmetry, 14:550, 2022.
			\bibitem{M.G 1974} G. Marsaglia, G.P. Styan. Equalities and inequalities for ranks of matrices. Linear Multilinear Algebra., 2:269-292,  1974.
			
			\bibitem {A.B. 1991}  A.B. \"{O}zg\"{u}ler. The matrix equation $AXB+CYD=E$ over a principal ideal domain. SIAM J. Matrix Anal. Appl., 12:581-591, 1991.
			\bibitem{S.C. 1999} S.C. Pei,  C.M. Cheng. Color image processing by using binary quaternion-moment-preserving thresholding techniqe. IEEE Trans. Image Process, 8:614-628,  1999. 	
			\bibitem{L.Q. 2021} L. Qi, Z.Y. Luo, Q.W. Wang, and X.Z. Zhang. Quaternion matrix optimization: motivation and analysis. J. Optim. Theory Appl., https://doi.org/10.1007/s10957-021-01906-y.
			\bibitem{W.E. 1952} W.E. Roth. The equations $AX-YB=C$ and $AX-XB=C$ in matrices. Proc. Amer. Math. Soc., 3:392-396,  1952.
			\bibitem{R.L. 2014}  L. Rodman. Topics in Quaternion Linear Algebra. Princeton University Press, Princeton, NJ, USA, 2014.
			
			\bibitem{C.C 2011} C.C. Took, D.P. Mandic. Augmented second-order statistics of quaternion random signals. Signal Process, 91:214-224,  2011.
			\bibitem{F.Z 2011} C.C. Took, D.P. Mandic, and F.Z. Zhang. On the unitary diagonalization of a special class of quaternion matrices. Appl. Math. Lett., 24:1806-1809,  2011.
			
			
			
			
			
			
			
			
			\bibitem{Q.W. 1991} Q.W. Wang. The decomposition of pairwise matrices and matrix equations over an arbitrary skew field. Acta Math. Sinica., 39:396-403,  1991.
			
			
			\bibitem{R.N 2022}R.N. Wang, Q.W. Wang, and L.S. Liu. Solving a system of Sylvester-like quaternion matrix equations. Symmetry, 14:1056, 2022.
			
			\bibitem {T30} Q.W. Wang, A. Rehman, Z.H. He, and Y. Zhang. Constraint generalized Sylvester matrix equations. Automatica, 69:60-64,  2016.
			
			
			
			
			
			
			
			\bibitem{W.Z. 2014} Q.W. Wang, Z.H. He. Systems of coupled generalized Sylvester matrix equations. Automatica, 50:2840-2844, 2014.
			\bibitem{W.Z. 2013} Q.W. Wang, Z.H. He. Solvability conditions and general solution for mixed Sylvester equations. Automatica, 49:2713-2719, 2013.
			
			\bibitem{T30} Q.W. Wang, Z.H. He, and Y. Zhang. Constrained two-side coupled Sylvester-type quaternion matrix equations. Automatica, 101:207-213,  2019.
			\bibitem{Wang 2009} Q.W. Wang, J.W. Vander Woude, and H.X. Chang. A system of real quaternion matrix equations with application. Linear Algebra Appl., 431:2291-2303,  2009.
			
			\bibitem{Y.F. 2022} Y.F. Xu, Q.W. Wang, L.S. Liu, and M. S. Mehany.  A constrained system of matrix equations. Comp. Appl. Math., 41:166,  2022.
			
			\bibitem{S.W. 2021} S.W. Yu, Z.H. He, T.C. Qi, and  X.X. Wang. The equivalence canonical form of five quaternion matrices with applications to imaging
			and Sylvester-type equations. J. Comput. Appl. Math.,  393:113494, 2021.
			
			\bibitem{S.F. 2013}  S.F. Yuan, Q.W. Wang, and X.F. Duan.  On solutions of the quaternion matrix equation $AX=B$ and their applications in color image restoration. J. Comput. Appl. Math., 221:10-20, 2013.
			
			
			
			\bibitem{X.Z. 2016} X. Zhang.  A system of generalized Sylvester quaternion matrix equations and its applications. Appl. Math. Comput., 273:74-81,  2016.
			
			
			\bibitem{Y.N. 2002} Y.N. Zhang, D.C. Jiang, and J. Wang. A recurrent neural network for solving Sylvester equation with
			time-varying coefficients. IEEE Trans Neural Networks, 13:1053-1063, 2002.
			\bibitem{Y.Z. 2013}  Y. Zhang,  R.H. Wang. The exact solution of a system of quaternion matrix equations involving $\eta$-Hermicity. Appl. Math. Comput., 222:201-209, 2013.

    \end{thebibliography}
\end{document}